 \numberwithin{equation}{section}
 \theoremstyle{plain}
 \newtheorem{thm}{Theorem}[section]
 \newtheorem{prop}[thm]{Proposition}
 \newtheorem{lem}[thm]{Lemma}
 \theoremstyle{definition}
 \theoremstyle{remark}
 \newtheorem{remark}[thm]{Remark}
 \let\pa=\partial
 \let\al=\alpha
 \let\b=\beta
 \let\d=\delta
 \let\g=\gamma
 \let\e=\varepsilon
 \let\lam=\lambda
 \let\s=\sigma
 \let\f=\frac
 \let \les = \lesssim
  \let \gtr = \gtrsim
 \let\om=\omega
 \let \th = \theta
 \let\G= \Gamma
\let\B = \Big
 \let\D=\Delta
 \let\td = \tilde
 \let\wt=\widetilde
 \let\teq \triangleq
 \let\pa=\partial
 \def\cE{{\mathcal E}}
 \def\cL{{\mathcal L}}
 \def\cR{{\mathcal R}}
 \def\na{\nabla}
 \def\la{\langle}
 \def\ra{\rangle}
\def\one{\mathbf{1}}
 \newcommand{\beq}{\begin{equation}}
 \newcommand{\eeq}{\end{equation}}
  \newcommand{\bal}{\begin{aligned} }
  \newcommand{\eal}{\end{aligned}}
 \newcommand{\ben}{\begin{eqnarray}}
 \newcommand{\een}{\end{eqnarray}}
 \newcommand{\beno}{\begin{eqnarray*}}
 \newcommand{\eeno}{\end{eqnarray*}}
 \newcommand{\uu}{\mathbf{u}}
\newcommand{\Z}{\mathbb{Z}}
\newcommand{\R}{\mathbb{R}}
\newsavebox{\@brx}
\newcommand{\llangle}[1][]{\savebox{\@brx}{\(\m@th{#1\langle}\)}%
  \mathopen{\copy\@brx\kern-0.5\wd\@brx\usebox{\@brx}}}
\newcommand{\rrangle}[1][]{\savebox{\@brx}{\(\m@th{#1\rangle}\)}%
  \mathclose{\copy\@brx\kern-0.5\wd\@brx\usebox{\@brx}}}
 \author{Jiajie Chen}
 \date{ \today}
\title[Blowup of perturbed Landau equation]{Nearly self-similar blowup of the slightly perturbed homogeneous Landau equation with very soft potentials}
\begin{document}

\begin{abstract}

We study the slightly perturbed homogeneous Landau equation 
\[
\partial_t f = a_{ij}(f)  \cdot \partial_{ij} f + \alpha c(f) f, \quad c(f) = - \partial_{ij} a_{ij}(f),
\]
with very soft potentials, where we increase the nonlinearity from $ c(f) f$ in the Landau equation to $\alpha c(f) f$ with $\alpha>1$. For $\alpha > 1 $ and close to $1$, we establish finite time nearly self-similar blowup from some smooth initial data $f_0 \geq 0$, which can be both radially symmetric or non-radially symmetric. The blowup results are sharp as the homogeneous Landau equation $(\alpha=1)$ is globally well-posed, which was established recently by Guillen and Silvestre. To prove the blowup results, we build on our previous framework \cite{chen2020slightly,chen2021regularity} on sharp blowup results of the De Gregorio model with nearly self-similar singularity to overcome the diffusion. Our results shed light on potential singularity formation in the inhomogeneous setting.

\end{abstract}

 \maketitle

\section{Introduction}\label{sec:intro}

Whether the Landau-Coulomb equation,  proposed by Landau in 1936, can develop a finite time singularity is an important open problem in kinetic equations \cite{silvestre2023regularity,villani2002review}. 
The Landau equation with general potentials can be written as 
\beq\label{eq:LC}
\bal
 & \pa_t f + v \cdot \na_x f = Q(f, f)= a_{ij}(f) \pa_{ij} f + c(f) f,  \\
&  a_{ij}(f)  =  \phi^{ij}(v) \ast f , \quad c(f) = - \pa_{ij} a_{ij}(f), 
\quad \phi^{ij}(v) \teq   \f{1}{8 \pi} ( \d_{ij} - \f{v_i v_j}{|v|^2} ) |v|^{\g + 2}, 
\eal
\eeq
where $\g \in [-3, 1]$, $f( x, v, t) : \R^3 \times \R^3 \times [0, T] \to \R_+$ is a distribution function for the velocity $v$ at $x$, and $Q$ is the collision operator only acting on the velocity variable.
It can be reformulated as the divergence form  
\beq\label{eq:Q}
Q(f, g) =  \pa_i \int_{\R^3} \phi^{ij}(v - \td v ) \B( f( \td v) \pa_j g(v) - g(v) \pa_j f(\td v)   \B) d \td v   . 
\eeq
In the case of  $\g=-3$, we obtain the Coulomb potential and can simplify $a_{ij}, c$ as $a_{ij}(f) = - \pa_{ij}(-\D)^{-2}f(v) , c(f) = f $.

One of the major difficulties in establishing  global regularity of \eqref{eq:LC} arises from the competing effects of the diffusion $a_{ij}(f) \pa_{ij} f$ with nonlocal coefficients and the nonlinearity $c(f) f$, potentially leading to blowup. 
Both nonlinear terms have the same scaling, making it challenging to determine which term is stronger. 
Very recently, in a remarkable work of Guillen-Silvestre \cite{Lius2023Landau}, they 
 established the global regularity of homogeneous Landau equation \eqref{eq:LC} with interaction potentials covering \eqref{eq:LC} with $\g \in [-3, 1]$ by proving the monotonicity of the Fisher information in time. 
 For more discussions on the  regularity of the Landau equation, we refer to reviews by Villani \cite{villani2002review} and Silvestre \cite{silvestre2023regularity}. 





To study potential singularity formation of \eqref{eq:LC}, in this paper, we consider the slightly perturbed homogeneous Landau equation \eqref{eq:LC} with very soft potentials $\g \in [-3, -2)$ by increasing the strength of the nonlinearity  $c(f) f$
\beq\label{eq:LC_a}
\pa_t f(v) = Q(f, f) + (\al-1) c(f) f = a_{ij}(f) \cdot \pa_{ij}(f) + \al c(f) f 
, \quad c(f) = - \pa_{ij} a_{ij}(f).
\eeq

Since the homogeneous Landau equation does not blowup, to construct a potential singularity of \eqref{eq:LC}, one needs to make use of the transport part. We mimic its effect using the extra nonlinearity $ (\al -1) c(f) f$ and consider the homogeneous setting, where $f$ is independent of $x$. 
Although the transport part $ v \cdot \na_x f $ \eqref{eq:LC} is linear in $f$, since the solution scales differently in $x, v$, $ v \cdot \na_x f $ and $c(f) f$ can have the same scaling. See \cite{bedrossian2022non} for the scaling symmetries of \eqref{eq:LC}.
At the macroscopic level, the imploding singularities in compressible fluids \cite{merle2022implosion1,merle2022implosion2} suggest that the transport term can have highly nonlinear effects. It may play a role as concentrating the particles at some point $x_*$ and further lead to an effect similar to $(\al-1) c(f) f$ at $x_*$ in \eqref{eq:LC_a}.

The local well-posedness of \eqref{eq:LC_a} is a consequence of Theorem 1.1 \cite{henderson2019local} in the homogeneous setting, where the authors proved the local well-posedness of the Landau equation without using the divergence form. The above model has the same spirit as the Krieger-Strain model \cite{krieger2012global} for Landau equation and the generalized Constantin-Lax-Majda model for 3D Euler \cite{OSW08,DG90}. These models capture competing effects between two nonlinearities. See more discussions in Sections \ref{sec:euler}, \eqref{eq:KS}. 
It seems that there is limited prior work on potential singularity formation in kinetic equations. Our second motivation is to shed some light on this aspect.



Our main result is the following.

\begin{thm}\label{thm:blowup}
For each $\g \in [-3, -2)$, there exists an absolute constant $\d_{\g} > 0 $ such that for any $\al \in (1, 1 + \d_{\g})$, the perturbed Landau equation \eqref{eq:LC_a} develops a nearly self-similar singularity in finite time $T > 0$ from some nonnegative initial data $f_0 \in C_c^{\infty}$ or $f_0 \in C^{\infty}$ with $ f_0 > 0, f_0 \exp(|v|^2)  \les 1$. The initial data is even in $v_i$ and can be both radially symmetric or non-radially symmetric. 
Moreover, the mass and energy $\int f |v|^k dv,k=0,2$ blow up at $T$.
\end{thm}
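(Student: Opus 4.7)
The plan is to convert the finite-time singularity question into a long-time stability problem via dynamic rescaling. Introducing
\[
f(v, t) = C_f(\tau)\, F\bigl(C_l(\tau)\, v,\, \tau\bigr), \qquad \tau = \tau(t),
\]
with $C_f, C_l$ and the time reparametrization $\tau$ fixed self-consistently by two normalization conditions on $F$, the equation \eqref{eq:LC_a} rewrites as
\[
\partial_\tau F = a_{ij}(F)\, \partial_{ij} F + \alpha\, c(F)\, F + c_l(\tau)\, y \cdot \nabla_y F + c_f(\tau)\, F.
\]
In these variables, finite-time blowup of $f$ corresponds to existence of a globally smooth $F(\tau)$ that remains close to an approximate steady state while $C_f(\tau) \to \infty$ as $\tau \to \infty$.

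The first step is to construct an approximate steady state $\bar F \geq 0$ together with scaling parameters $(\bar c_l, \bar c_f)$. Since the unperturbed equation ($\alpha = 1$) is globally regular by Guillen-Silvestre, no nontrivial such profile exists at $\alpha = 1$, so $\bar F$ must degenerate as $\alpha \to 1^+$. I would build it perturbatively in $\alpha - 1$: first freeze $a_{ij}(\bar F)$ as a prescribed radial background, solve the resulting nonlocal elliptic profile equation for $\bar F$ with fast (Gaussian-type or algebraic) decay, and then close a contraction in $\alpha$. A radial choice of $\bar F$ yields the radial case of Theorem \ref{thm:blowup}; perturbing by a small mode which is merely even in each $v_i$ yields the non-radial case. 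The smallness of $\delta_\gamma$ arises precisely from needing this construction to converge while preserving positivity and symmetry.

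The core analytic step, and the main obstacle, is to linearize the rescaled equation at $\bar F$ and prove a coercivity or spectral-gap estimate for the linearized operator $\mathcal L_{\bar F}$ in a suitable weighted Sobolev norm. Following the strategy developed in \cite{chen2020slightly,chen2021regularity} for the De Gregorio model, I would design a weight $\rho(y)$ adapted to $\bar F$ and to the self-similar transport $c_l\, y \cdot \nabla_y$, so that the transport damping together with the $\alpha$-dependent lower-order terms produces a strictly negative quadratic form on the perturbation $\eta = F - \bar F$. The real obstacle is dominating the diffusion $a_{ij}(F)\partial_{ij}$: for $\gamma \in [-3, -2)$ the kernel $|v|^{\gamma+2}$ is singular, so $a_{ij}(\bar F)$ lacks regularity at the origin and the cross terms $a_{ij}(\eta)\, \partial_{ij}\bar F$, as well as the commutators of $\partial_{ij}$ with the weight $\rho$, must be absorbed using sharp weighted $L^p$ estimates on Riesz-type convolutions. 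This is precisely the balance (transport damping versus smoothing) successfully handled in the De Gregorio analysis, which is what makes its machinery applicable here.

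Once linear stability is in hand, a nonlinear bootstrap argument in the chosen norm yields global existence of $F(\tau)$ near $\bar F$ for all $\tau \geq 0$ from suitably chosen nonnegative initial data, including compactly supported $f_0 \in C_c^\infty$ or strictly positive $f_0 \in C^\infty$ with the stated Gaussian decay and symmetry. Translating back via the rescaling produces a finite blowup time $T$ at which $f$ is nearly self-similar. Blowup of the moments $\int f |v|^k\, dv$ for $k = 0, 2$ follows by a direct change of variables, since they equal $C_f(\tau)\, C_l(\tau)^{-3-k} \int F |y|^k\, dy$, and the scaling laws for $C_f, C_l$ associated with $\bar F$ force the product to diverge as $\tau \to \infty$, i.e.\ as $t \to T$.
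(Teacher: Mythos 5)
Your high-level framework (dynamic rescaling, approximate steady state, linear stability, bootstrap, then moment blowup by undoing the scaling) matches the paper's, but two of your central steps contain genuine gaps. First, the construction of the approximate steady state: you propose that the profile must degenerate as $\al\to 1^+$ and should be built by solving a nonlocal elliptic profile equation via a contraction in $\al$. The paper instead takes $\bar f=\exp(-|v|^2)$ \emph{exactly} — an exact zero of $Q$ — for every $\al$, so that the residual $N(\bar f)$ is automatically $O(\al-1)$, coming only from the scaling terms and the extra nonlinearity; no profile equation is solved. More importantly, you never explain why the scaling parameters have the signs that produce blowup. In the paper, $c_l,c_{\om}$ are determined by the normalization conditions $\int f|v|^k dv=\mathrm{const}$, $k=0,2$, and the whole mechanism rests on the explicit computation (Lemma \ref{lem:mono}) that $C_2(\bar f)>0$, $C_1(\bar f)<0$ and $|\bar c_{\om}/\bar c_l|>5$ for $\g\in[-3,0)$. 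Without the sign conditions there is no blowup at all, and without the quantitative bound $|\bar c_{\om}/\bar c_l|>5$ your final claim that $C_{\om}^{-1}C_l^{k+3}$ diverges for $k=0,2$ is unjustified (one needs $-c_{\om}-(k+3)c_l>0$, i.e.\ $|c_{\om}/c_l|>5$).

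Second, your linear stability step misidentifies the main obstruction. The difficulty is not the singularity of $|v|^{\g+2}$ at the origin or commutators of $\pa_{ij}$ with the weight; it is that the classical coercivity of $\cL_1 f=Q(f,\mu)+Q(\mu,f)$ holds in $L^2(\mu^{-1/2})$, with an \emph{exponentially} growing weight, and in that space the transport term $-\bar c_l v\cdot\na$ produces $\f{\bar c_l}{2}\int(3+2|v|^2)f^2\mu^{-1}$, which cannot be absorbed. One therefore needs coercivity of $\cL_1$ in a space with a \emph{polynomially} growing weight (with growth exponent between $7$ and $2|\bar c_{\om}/\bar c_l|-3$, which is why $|\bar c_{\om}/\bar c_l|>5$ matters again). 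This is exactly the content of the paper's Theorem \ref{thm:coer_poly} for the radial Coulomb case (built by carefully gluing $\mu^{-1}$ to $|v|^k$ and a localized compactness argument), and of the semigroup estimates of \cite{carrapatoso2017landau} in the general case. Also note that the $O(\al-1)$ damping from the transport term is only needed to beat the $O(\al-1)$ residual; the $O(1)$ part of the spectral gap must come from $\cL_1$ itself after imposing the orthogonality conditions via the normalization and the even symmetry — another point your sketch leaves unaddressed.
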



By nearly self-similar blowup, we mean that the blowup solution, under a suitably dynamic rescaling, is close to an approximate blowup profile. By performing higher order stability estimates, one can prove that the blowup is asymptotically self-similar and $|| f||_{L^{\infty}}$ blows up at $T$. Since these additional steps follow our previous works \cite{chen2019finite,chen2019finite2,chen2021HL,chen2020slightly}
we do not pursue them. See Remark \ref{rem:asymp} for more discussions. Note that the initial data is allowed to decay polynomially. See Section \ref{sec:blowup}.
One may compare the potential blowup among the Landau equation, the model \eqref{eq:LC_a}, and the nonlinear heat equation 
\beq\label{eq:heat}
 \pa_t = \D f + \al f^2, \quad \al > 0.
\eeq

Singularity formation of 
\eqref{eq:heat} has been extensively studied \cite{merle1998optimal,giga1985asymptotically}. A key difference between \eqref{eq:heat} and \eqref{eq:LC} or \eqref{eq:LC_a} is that the diffusion and nonlinearity scale differently. 
Thus, $\al$ can be normalized to $1$. 
Moreover, the coefficient $a_{ij}(f)$ of our blowup solution to \eqref{eq:LC_a} also blows up, significantly enhancing the dissipation. See \eqref{eq:SS_ansatz} for the approximate blowup scaling. 
The enhanced dissipation makes it more difficult to establish (potential) singularity formation in \eqref{eq:LC_a}, \eqref{eq:LC} compared to \eqref{eq:heat}. 
See \cite{silvestre2023regularity} for related discussions on enhanced dissipation.



Theorem \ref{thm:blowup} highlights that to obtain global regularity of the inhomogeneous Landau equation \eqref{eq:LC}, one needs to make essential use of the divergence form.


To prove Theorem \ref{thm:blowup}, we need stability estimates of $\cL_1 f = Q(f, \mu) + Q(\mu, f) , \mu = e^{-|v|^2}$ for $f$ decaying only polynomially. We will use the highly nontrivial stability estimate of $\cL_1$ developed in \cite{carrapatoso2017landau}, which is based on semigroup. Since the case of Coulomb potential is the most physically important, we establish a new coercive estimate in this case and use it to prove blowup. 
We remark that we do not need estimates from \cite{carrapatoso2017landau} to prove blowup of \eqref{eq:LC_a} with Coulomb potential. 

\begin{thm}\label{thm:coer_poly}
Let $\mu = e^{- |v|^2}, \la v \ra = (1 + |v|^2)^{ \f{1}{2}}$, and $Q$ be the Coulomb collision operator $ \g = -3$ \eqref{eq:LC}, \eqref{eq:Q}. There exists a weight $W$ with $1 \les W(v) \les \la v \ra^{ \f{21}{2}}$ and an absolute constant $c_* > 0$ such that for any radially symmetric function $f$ orthogonal to $1, v_i, |v|^2, i=1,2,3$, we have
\beq\label{eq:thm_coer_poly}
 \int_{\R^3} (Q( f, \mu) + Q(\mu, f)) f W d v  \leq - c_* \int_{\R^3} \la v \ra^{-3} ( f^2 + |\na f|^2) W d v.
\eeq
\end{thm}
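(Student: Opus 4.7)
The plan is to expand the quadratic form $\int \cL_1 f \cdot f W\,dv$ explicitly and isolate the main dissipative contribution. Using the non-divergence form of $Q$ with $c(\mu)=\mu$, $c(f)=f$ in the Coulomb case gives
\[
\cL_1 f = a_{ij}(\mu)\pa_{ij}f + a_{ij}(f)\pa_{ij}\mu + 2\mu f,
\]
so $\int \cL_1 f\cdot fW\,dv = T_1+T_2+T_3$ with $T_3 = 2\int \mu f^2 W\,dv\ge 0$. Two integrations by parts reduce $T_1$ to a main negative-definite piece $-\int a_{ij}(\mu)\pa_i f\pa_j f\,W\,dv$ together with a lower-order term $\int \mathcal{R}_W(v)f^2\,dv$, where $\mathcal{R}_W$ is an explicit expression in $W,\nabla W,\nabla^2 W$ and the first two derivatives of $a_{ij}(\mu)$; the identity $\pa_{ij}a_{ij}(\mu)=-\mu$ lets one close all derivatives of $a(\mu)$ that appear.

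First I would handle the gradient piece via radial symmetry: since $f$ is radial, $a_{ij}(\mu)\pa_i f\pa_j f = \bar a_\parallel(\mu)(v)\,(f')^2$ with $\bar a_\parallel(\mu)(v) := a_{ij}(\mu)\hat v_i\hat v_j$. A direct asymptotic computation with $\mu=e^{-|v|^2}$ and the Coulomb kernel $\phi^{ij}$ yields $\bar a_\parallel(\mu)(v)\gtrsim \la v\ra^{-3}$ globally, matching the $|\nabla f|^2$ weight in \eqref{eq:thm_coer_poly}. For the $f^2$ coercivity I would take $W$ as an explicit positive polynomial of degree at most $21/2$. Using $\pa_i\pa_j\la v\ra^k = k\la v\ra^{k-2}\delta_{ij}+k(k-2)\la v\ra^{k-4}v_i v_j$ contracted with $\bar a(\mu)$, the leading contribution of $\mathcal{R}_W$ at infinity is of order $\la v\ra^{-3}W$ with a positive coefficient that can be enlarged by tuning the top-order coefficient of $W$; subleading and compact-set pieces are absorbed by $2\mu W$ together with a weighted Poincar\'e inequality on the radial profile using the orthogonality of $f$ to $1$ and $|v|^2$.

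The term $T_2$ is the principal obstacle. The factor $\pa_{ij}\mu=(4v_iv_j-2\delta_{ij})\mu$ restricts the outer integration in $T_2$ to a bounded region in $v$, so only the values of $a_{ij}(f)(v)=\phi^{ij}*f(v)$ for $v$ in a compact set matter. I would estimate $a_{ij}(f)(v)$ by subtracting the degree-$\le 2$ Taylor polynomial of $\td v\mapsto \phi^{ij}(v-\td v)$ about a convenient point and exploiting the orthogonality of $f$ to $1,v_i,|v|^2$, which annihilates the constant, linear, and trace-quadratic parts and effectively gains two powers of $|\td v|$ inside the convolution (with the trace-free piece of the quadratic term handled separately by the structural identity $\phi^{ij}v_iv_j=0$). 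A Cauchy--Schwarz estimate against the weighted norm on the right-hand side of \eqref{eq:thm_coer_poly} then shows $|T_2|$ can be absorbed into a small multiple of the coercivity supplied by $T_1+T_3$.

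The hardest part will be the pointwise bookkeeping that makes the polynomial weight $W\lesssim\la v\ra^{21/2}$ simultaneously (i) produce a positive $\la v\ra^{-3}W$ coefficient of $f^2$ out of the commutator $a_{ij}(\mu)\pa_i\pa_j W$, (ii) control the drift remainder $\pa_j a_{ij}(\mu)\cdot \pa_i W$ arising in $\mathcal{R}_W$, and (iii) keep the non-local contribution $T_2$ compatible with the coercivity budget. Matching these three requirements, with the constraint that $f$ is only assumed to decay polynomially, is what forces the specific exponent $21/2$ appearing in the statement.
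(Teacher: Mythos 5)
Your far-field bookkeeping is essentially the paper's Section 3.4 (polynomial weight, drift term $\pa_j a_{ij}(\mu)\,\pa_i W$ beating $a_{ij}(\mu)\pa_{ij}W$ to give a $-c\la v\ra^{-3}W$ damping coefficient, nonlocal terms decaying because $\pa^k\mu$ does), and the lower bound $a_{ij}(\mu)\pa_i f\pa_j f\gtrsim\la v\ra^{-3}|\na f|^2$ for radial $f$ is correct. But the proof fails where the theorem is actually hard: the near field. With any weight $W$ that is merely bounded below (rather than $\sim\mu^{-1}$), the zeroth-order contributions on $\{|v|\lesssim 1\}$ — the $+2\mu f^2W$ term you call $T_3$ together with the commutator terms from $T_1$ — are positive and of size $O(1)\int_{|v|\le C}f^2$, with no small parameter. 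Note $T_3\ge 0$ \emph{hurts} you: it adds to the left side of \eqref{eq:thm_coer_poly}, which must come out negative, so it cannot "supply coercivity" or "absorb" anything. A generic weighted Poincar\'e inequality plus orthogonality to $1,|v|^2$ cannot beat these $O(1)$ losses using only the $\la v\ra^{-3}|\na f|^2$ dissipation, because the statement that the quadratic form of $\cL_1$ has a genuine spectral gap off its kernel is exactly the nontrivial content being proved; invoking it as a "Poincar\'e inequality" is circular. The paper resolves this by (i) constructing a second weight $\rho$ that equals $\mu^{-1}$ exactly for $|v|\le R_1$ and is glued to a polynomial only in the far field through a delicate ODE construction with exponentially small losses, so that $-\la\cL_1 f,f\rho\ra$ reproduces the manifestly nonnegative Guo-type double-integral form $Q_\rho$ up to small errors, and (ii) proving a new \emph{localized} coercive estimate (Lemma \ref{lem:coer_unif}) with a spectral gap $\d_*$ uniform in the localization radius; the final $W$ is $K_1\rho+\f{1}{4\pi}|v|^{-2}\rho_2$, combining both estimates. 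None of this machinery is replaceable by the absorption argument you sketch.

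The treatment of $T_2=\int a_{ij}(f)\pa_{ij}\mu\cdot fW$ has the same defect. The moment cancellations from orthogonality to $1,v_i,|v|^2$ gain decay of $\phi^{ij}\ast f$ only for \emph{large} $|v|$ (this is the paper's Lemma \ref{lem:conv1}); for $|v|=O(1)$, which is precisely where $\pa_{ij}\mu$ lives, $a_{ij}(f)(v)$ is of full size $\|f\|$, so $T_2$ is an $O(1)$ bilinear term with no small constant in front and cannot be "absorbed into a small multiple" of anything. Indeed $\cL_1\mu=0$, so for $f$ near the kernel directions $T_2$ cancels $T_1+T_3$ \emph{exactly} rather than perturbatively; any proof must exploit this algebraic cancellation (the paper symmetrizes the local and nonlocal parts into the single positive semidefinite form $Q_\rho$ in \eqref{eq:coer_ansatz}, treating the nonlocal part perturbatively only in the far field). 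As written, your argument would "prove" coercivity of $\cL_{loc}+2\mu$ alone with a polynomial weight, which is false.
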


We will construct the weight $W$ explicitly. It is well-known that the linearized operator $\cL_1 f = Q(f, \mu) + Q(\mu, f) $ is coercive in $L^2(\mu^{-1/2})$ \cite{guo2002landau,hinton1983collisional,carrapatoso2017landau} with $\mu^{-1/2}$ growing exponentially. 
Yet, this coercive estimate is not sufficient for our purpose. It appears that 
\eqref{eq:thm_coer_poly} is the first coercive estimate of $\cL_1$ in some $L^2(W^{1/2})$ space with $W$ growing polynomially. The estimate \eqref{eq:thm_coer_poly} and its proof may reveal some delicate properties of  $\cL_1$ useful for other studies. The coercive estimates may be generalized to other potentials or without radial symmetry using the ideas and argument in Section \ref{sec:lin}, though the analysis would be more complicated. We refer more discussion of the estimate in \cite{carrapatoso2017landau} and Theorem \ref{thm:coer_poly} in Section \ref{sec:intro_lin}.

\subsection{Other related works and discussions of the model} We discuss some related works. 

\subsubsection{Regularity of the Landau equation}


The literature on kinetic equation is too vast for an exhaustive discussion, but we highlight some relevant results  on Landau equation with soft potential. The local existence 
\cite{AP1977exist,villani1998new} and uniqueness were established in \cite{fournier2009well,fournier2010uniqueness,henderson2019local,desvillettes2000spatially}. 

The global nonlinear stability of Maxwellians on a periodic box was first established by Guo \cite{guo2002landau}. Later, Guo and Strain \cite{strain2006almost,strain2008exponential} proved stretched exponential convergence to Maxwellians. 
See \cite{carrapatoso2016cauchy,luk2019stability,carrapatoso2017landau} for other recent results on stability near Maxwellians or vacuum. 


Finite time blowup of a model of Boltzmann equation without the loss term was established 
in \cite{andreasson2004blowup}. We refer to the excellent surveys \cite{silvestre2023regularity,villani2002review} for more discussions on 
kinetic equations.

\subsubsection{3D incompressible Euler equations and models}\label{sec:euler}

The Landau equation \eqref{eq:LC} has some structures similar to the 3D incompressible Euler equations. In the vorticity formulation, the Euler equations can be written as follows 
\beq\label{eq:Euler}
\om_t = - \uu \cdot \na \om  + \om \cdot \na  \uu = - \na \times ( \uu \cdot \na \uu),
\quad \uu = \na \times (-\D)^{-1} \om,
\eeq
where $\uu$ is the velocity and $\om = \na \times \uu$ is the vorticity. 
It is commonly believed that the vortex stretching $\om \cdot \na \uu$ is the driving force for finite time blowup. Yet, the advection term can regularize the solution \cite{lei2009stabilizing,hou2008dynamic,hou2006dynamic}. The above divergence structure and competing nonlinear effects 
share some similarities with the Landau equation \eqref{eq:LC}. 



Recent advances on singularity formation in \eqref{eq:Euler} have been made by choosing low regularity data \cite{elgindi2019finite,chen2019finite2,elgindi2019stability} or imposing a boundary for smooth data \cite{ChenHou2023a,chen2019finite2,ChenHou2023b,luo2013potentially-2} to weaken the advection. 
For the inhomogeneous Landau equation, the transport term may enhance the nonlinear effects. See the discussion below \eqref{eq:LC_a}.


By enhancing the strength of the vortex stretching \cite{chen2020singularity} or weaking the advection using $C^{\al}$ data \cite{chen2021regularity}, we proved sharp blowup results of the De Gregorio model \cite{DG90} of 3D Euler on a circle. The proof of Theorem \ref{thm:blowup} builds on the method in \cite{chen2020singularity,chen2021regularity}. See Section \ref{sec:ideas}. Thus, our blowup argument for 
\eqref{eq:LC_a} may shed light on the singularity formation of the original equation \eqref{eq:LC}.



\vspace{0.1in}

\paragraph{\bf{Conservation laws}}

One of the drawbacks of the model \eqref{eq:LC_a} with $\al >1$ may be that it does not conserve the mass $\int f d v$ and the energy $\int f |v|^2 d v$ like the Landau equation \eqref{eq:LC}. Yet, in  \eqref{eq:LC}, these a-priori conservation laws provide much weaker control on the solution in smaller scale, while two competing nonlinear terms remain the same order in smaller scale. In 3D Euler equations, the conservation of energy $|| \uu ||_{L^2}^2 $ also provides much weaker control on the solution in smaller scale. It does not prevent the blowup \cite{elgindi2019finite,chen2019finite2,ChenHou2023a}. 
In addition, in the inhomogeneous setting of \eqref{eq:LC}, mass or energy at a specific point $x$ is not conserved.
If the equation develops a singularity at $x_*$, it is natural to expect that these macroscopic
quantities at $x_*$ are strictly increasing in time near the blowup time or even blow up. We remark that continuation criterion based on the macroscopic quantities, mass, energy, and entropy, has been established for the Landau and Boltzmann equation without cutoff and with moderate soft potential \cite{imbert2022global,cameron2018global,golse2016harnack,henderson2019local,henderson2020c}. 





\vspace{0.1in}
\paragraph{\bf{Krieger-Strain model}}

There is a model problem for homogeneous Landau equation \eqref{eq:LC}, which closely relates to \eqref{eq:LC_a}. In \cite{krieger2012global}, Krieger and Strain introduced the following model
\beq\label{eq:KS}
\pa_t f = (-\D)^{-1} f \cdot \D f  + \al f^2
\eeq
with $\al \geq 0$, which replaces the diffusion $a_{ij}(f) \pa_{ij}(f)$ in \eqref{eq:LC} by $(-\D)^{-1} f \cdot \D f$.  

Global regularity of \eqref{eq:KS} for radially symmetric, positive and monotone data was established for $\al \in [0,1]$ \cite{krieger2012global,gressman2012non,gualdani2016estimates}.
The generalization of the model \eqref{eq:KS} was studied in \cite{gualdani2022hardy}.



\subsubsection{Approximately self-similar blowup}\label{sec:intro_rate}

Our blowup solution for \eqref{eq:LC_a} is nearly (approximately) self-similar and can be further proved to be asymptotically self-similar. 
There has been some progress in ruling out type I self-similar blowup solution to 
the Landau equation \eqref{eq:LC} and Boltzmann equation under some decay assumptions \cite{bedrossian2022non}. We note that an important class of type I self-similar blowup to \eqref{eq:LC} has not been ruled out by \cite{bedrossian2022non}, and it is meaningful to study (approximately) self-similar blowup of \eqref{eq:LC} and related models. 

We focus on the homogeneous Landau equation \eqref{eq:LC}. Although it is globally regular \cite{Lius2023Landau}, we use it to illustrate the issue for simplicity. The homogeneous self-similar blowup solution to 
\eqref{eq:LC} $f(t, v ) = \f{1}{T-t} F( \f{v}{(T-t)^{\th}} )$ satisfies $F \geq 0$ and 
\beq\label{eq:profile}
 \th v \cdot \na F + F = Q( F, F ).
\eeq
In Theorem 2.5 in \cite{bedrossian2022non}, the authors proved that if $F \in C^{\infty}$ and $F \in L^1$ for $\th \in  ( \f{1}{3}, \f{1}{2}) $ or $F (1 + |v|^2) \in L^1$ for $\th = \f{1}{3}$, $F$ must be $0$. We note that if $F$ decays in the far-field, e.g. $| F| \les |v|^{-k}$ for some $k > 0$,  the nonlinear term $Q(F, F)$ decays faster. 
Balancing the decay rates for the linear terms in \eqref{eq:profile} for large $|v|$, one gets the natural decay rate $ F \sim |v|^{-1/\th}$, which does not satisfy the above integrable assumption for $F$ and $\th \in [1/3, 1/2]$. Thus, this class of blowup profile is not ruled out by \cite{bedrossian2022non}. Similar argument generalizes to the inhomogeneous setting.

We note that (approximately) self-similar blowup with slowly decaying profiles are very common in fluid PDEs with two-parameter scaling symmetry groups. Examples include Burgers' equation \cite{collot2018singularity}, 3D incompressible Euler and 2D Boussinesq equations \cite{chen2019finite2,ChenHou2023a,ChenHou2023b,elgindi2019finite}, and related models \cite{chen2021HL,chen2019finite}. In these examples, decay rates of the profiles are determined by balancing the linear parts similar to the above, and the velocity can grow sublinearly in the far-field. 
The approximately self-similar blowup solution 
has finite energy by truncating the profile  \cite{chen2019finite2,ChenHou2023a,ChenHou2023b,elgindi2019stability}.

\subsection{Ideas of the proof}\label{sec:ideas} Below, we discuss the ideas in the proof. 

\subsubsection{Blowup solution based on the Maxwellian}\label{sec:idea_ASS}

One of the major difficulties for singularity formation of \eqref{eq:LC_a} is to overcome the diffusion \eqref{eq:LC_a}. 
We adopt the methods in our previous works on incompressible fluids \cite{chen2019finite,ChenHou2023a,ChenHou2023b,chen2019finite2} and the ideas in \cite{chen2020slightly,chen2021regularity} to construct a blowup solution by perturbing the steady state. Our goal is to construct a nearly self-similar blowup solution close to the following form 
\beq\label{eq:SS_ansatz}
 f(t, v) = 
(T-t)^{c_{\om}} F( \f{v}{(T-t)^{c_l}}),
\eeq
where $F$ is close to the Maxwellian $\mu = \exp(-|v|^2)$ and has a decay rate $|v|^{-\lam}, \lam 
= |\f{ c_{\om}}{c_l}| > 5$. See Lemma \ref{lem:mono}. The scaling of $f$ 
and the estimate in \eqref{eq:blowup_vk} imply the blowup of $a_{ij}(f)$ \eqref{eq:LC_a}.  We study  \eqref{eq:LC_a} using the dynamic rescaling equation 
\beq\label{eq:res_intro}
\pa_t f + c_l v \cdot \na f = c_{\om} f + Q(f, f) + (\al -1 ) f^2. 
\eeq

To develop stability estimates later, we need to impose orthogonal conditions: $\int  f(t) g d v $ is constant in $t$ for $g = 1, v_i, |v|^2$. Different from the homogeneous Landau equation \eqref{eq:LC}, 
\eqref{eq:res_intro} does not preserve these five conditions. 
A key idea is to impose orthogonality with $1, |v|^2$ by choosing normalization conditions $ \int f(t) |v|^k = \int f_0 |v|^k d v, k= 0, 2$ for $c_l, c_{\om}$. Moreover, by considering $f$ even in $v_i$, which is preserved by the equation, we obtain other three conditions.
See Section \ref{sec:ASS} for the formulas of $c_l ,c_{\om}$ and more details on the dynamic rescaling reformulation. 

Formally $\al$ characterizes the relative strength between the diffusion and nonlinear terms $\al c(f) f$. A crucial observation is that if $\al >1$ and $f$ is close to the Maxwellian $ \mu = \exp(- |v|^2)$ for all time $t>0$ in \eqref{eq:res_intro}, the normalization conditions imply that $c_l \asymp \al -1 > 0, c_{\om} \asymp -(\al-1) < 0$ for  $ t> 0$. 
Although $|\al-1|$ is extremely small, these sign conditions allow us to obtain finite time blowup
using a rescaling argument. 
We will construct the approximate steady state $\bar f = \exp(-|v|^2)$, $\bar c_l \asymp (\al-1), \bar c_{\om} \asymp -(\al-1) $ and prove its nonlinear stability in \eqref{eq:res_intro} with 
$|| f(t) - \mu ||_X \les \al-1$ for all $t>0$ in suitable norm $X$. 



\subsubsection{Linear stability analysis and a new coercive estimate}\label{sec:intro_lin}

In the blowup analysis, we need to study the linearized equation of \eqref{eq:res_intro} around the approximate steady state $(\bar f, \bar c_l, \bar c_{\om})$
\beq\label{eq:lin_intro}
\pa_t f = \cL_{\al} f 
\teq  - \bar c_l v \cdot \na f + \bar c_{\om} f + Q(\bar f, f) + Q(f, \bar f) + (\al-1) ( c(\bar f) f + c(f) \bar f) .
\eeq
Here, $f$ denotes the perturbation.
Several nonlinear stability estimates of the Maxwellian have been established based on the $L^2(\mu^{-1/2})$ coercive estimate of $\cL_1 $ \cite{guo2002landau,hinton1983collisional,carrapatoso2017landau}: 
$\la \cL_1 f , f \mu^{-1} \ra \leq 0.$
 Yet, for $0< \al-1$ very small, we cannot treat $\cL_{\al}$ as a small perturbation to $\cL_1$ in $L^2(\mu^{-1/2})$ estimate. 
In fact, in $L^2(\mu^{-1/2})$ estimate, the scaling term $-\bar c_l v \cdot \na $ contributes to 
\[
 - \int \bar c_l v \cdot \na  f \cdot f \mu^{-1} d v 
 = \f{\bar c_l}{2} \int \na \cdot ( \mu^{-1} v ) f^2 d v
 = \f{\bar c_l}{2} \int (3 + 2 |v|^2)  f^2 \mu^{-1} d v ,
 \]
 which cannot be bounded by the energy norm $||f||_{L^2(\mu^{-1/2})}$. Similarly, if  $\rho$ in $L^2(\rho)$ estimate of $\cL_1$ grows too fast, e.g. $\rho(v) \gtr |v|^{k}$ with $k$ very large, 
 $-\bar c_l v \cdot \na $ leads to a term that destroys the stability. 
For the most important case \eqref{eq:LC_a} with Coulomb potential, we will develop  
a new coercive estimate \eqref{eq:thm_coer_poly}.

In \cite{carrapatoso2017landau}, the authors established highly nontrivial stability estimates of $\cL_1$ in some norm equivalent to $L^2( \la v \ra^{\th})$ with $\th > \f{7}{2}$, using an enlargement trick and semigroup stability estimates. We will apply these estimates to obtain stability estimates of $\cL_1$ for non-radially symmetric solution or potentials other than Coulomb. Our coercive estimates for Coulomb are drastically different from \cite{carrapatoso2017landau} and are based on direct estimates in weighted $L^2$ space. Additionally, the estimates are more elementary than \cite{carrapatoso2017landau}. We outline our coercive estimates below.




\vspace{0.1in}
\paragraph{\bf{Coercive estimates in the near-field}}

We carefully design the weight $\rho$ with 
\[
\rho(v) = \mu^{-1} = \exp(|v|^2) , \ |v| \leq R_1, \quad \rho(v) \sim C |v|^k , \ r  \geq R_2, \ k > 2 
\]
for some $R_1 < R_2$ large enough, and then establish the coercive estimates up to a small loss: 
\beq\label{eq:coer_near}
\bal
  & \la - \cL_1 f, f \rho \ra_{\R^3} \geq Q_{\rho} - o(R_1^{-\e}) || f \la v \ra^{k_2}||_{L^2}^2, \ k_2 > 0, \  \e > 0 \\
 & Q_{\rho} = \f{1}{2} \int_{\R^3} \int_{\R^3}  M_{\rho}(v, \td v) ( F_{\rho, i}(v) - F_{\rho,i}(\td v) )( F_{\rho,j}(v) - F_{\rho,j}(\td v)) \phi^{ij}(v - \td v) d v d \td v  ,
  \eal
\eeq
where  $ M_{\rho} \geq 0$ satisfies $M_{\rho}(v, \td v) = M_{\rho}(\td v , v)$, and $F_{\rho,i}$ depends on $f, \rho$. Since $ \{ \phi^{ij} \}_{ij}$ \eqref{eq:Q} is positive semi-definite and $M_{\rho} \geq 0$, we get $Q_{\rho } \geq 0$.

The above estimates are the most delicate parts 
and involve symmetrizing and comparing several integrals. We mainly use the radial symmetry of the solution to establish the above estimate. Note that gluing weights $\exp(|v|^2)$ and $|v|^k$ using some cutoff functions directly leads to a large loss term, which can destroy the stability estimates.

\vspace{0.1in}
\paragraph{\bf{Localized coercive estimates}}  Since $\rho = \mu^{-1}$ for $|v| \leq R_1$, for $|v|, |\td v| \leq R_1$, the integrand in $Q_{\rho}$ is the same as that in the coercive estimate \cite{guo2002landau}. We generalize the compactness argument in \cite{guo2002landau} to establish the coercive estimate 
\[
Q_{\rho}(f) \geq 
\d_n |  f \mu^{-1/2}|_{\s, n }^2, \quad \int_{|v| \leq n} f |v|^k d v = 0 , \ k=0,2, 
\quad \liminf_{n \to \infty} \d_n \geq \min(1, \d_{\infty}),
\]
for radially symmetric functions $f$, where $| g|_{\s, R_1}$ is some localized norm. In particular, $\d_n \geq \d_* > 0$  uniformly for some constant $\d_*$. See Lemma \ref{lem:coer_unif} for more details. The above localized coercive estimates appear to be new.

For large $|v|$, 
we treat the nonlocal terms in $\cL_1$ perturbatively (see \eqref{eq:lin}, \eqref{eq:lin_terms_rad}) since their coefficients decay exponentially fast,  much faster than those of the local parts.  We obtain dissipative estimates of $\cL_1$ in $L^2(\rho_2)$ up to an $O(1)$ loss in the near field with some weight $\rho_2$ growing polynomially using integration by parts. 
A similar property has also been used in \cite{carrapatoso2017landau,carrapatoso2015rate}.



By choosing  suitable polynomial growth rates for $\rho, \rho_2$, the extra terms $- \bar c_l v \cdot \na f + \bar c_{\om} f $ in \eqref{eq:lin_intro} contribute to damping terms in the energy estimates, and we have a spectral gap for $\cL_{\al}$ of order $\al-1$. Passing from linear stability estimate to nonlinear stability and finite time blowup are relatively standard \cite{chen2019finite,chen2019finite2}.

\vspace{0.1in}
\paragraph{\bf{Organization of the paper}}
The rest of paper is organized as follows. In Section \ref{sec:ASS}, we perform the dynamic rescaling reformulation of \eqref{eq:LC_a}, construct the approximate steady state, and derive the linearized equation. Section \ref{sec:lin} is devoted to a new coercive estimate and the linear stability analysis 
in the case of Coulomb potential with radial symmetry. We consider linear stability analysis for  general cases in Section \ref{sec:lin_nonrad}, and prove nonlinear stability and finite time blowup in Section \ref{sec:non}.
Some derivations about radially symmetric functions are given in the Appendix.

\section{The dynamic rescaling formulation and the approximate steady state}\label{sec:ASS}

In this section, we reformulate the problem using the dynamic rescaling equation. Then we construct an approximate steady state and derive the linearized equation. Throughout the paper, we consider solution $f$ to \eqref{eq:LC_a} even in $v_i$ so that $\int f v_i d v = 0, i=1,2,3$.


\subsection{Dynamic rescaling formulation} 

The solution to \eqref{eq:LC_a} satisfies the following scaling symmetry. 
If $f$ is a solution to \eqref{eq:LC_a}, for any $\lam ,\nu > 0$,
\[
f_{\lam, \nu}(v, t) = \nu \lam^{ 3 + \g} f( \lam v , \nu t )
\]
is another solution to \eqref{eq:LC_a}. Let $ f(v, t)$ be the solution to the physical equation \eqref{eq:LC_a}. It is easy to show that 
\beq\label{eq:dyn1}
\bal
 & F(v, \tau ) = C_{\om}(\tau)  f( C_l(\tau) v, t(\tau)) ,
 \eal
\eeq
is the solution to the dynamic rescaling equation 
\[
 \pa_{\tau} F + c_l v \cdot \na F = Q(F, F)  + (\al-1) c(F) F + c_{\om} F, 
 \]
where 
\beq\label{eq:dyn2}
  C_l( \tau ) = \exp( \int_0^{\tau} -c_l(s) ds ), \quad   C_{\om}(\tau) = \exp( \int_0^{\tau} c_{\om}(s) ds ) , \quad t(\tau) = \int_0^{\tau} C_{\om}(s)  C_l(s)^{-\g-3} d s. 
\eeq

We have the freedom to choose the time-dependent scaling parameters $c_l(\tau), c_{\om}(\tau)$ according to some normalization conditions. After we choose them, 
the above rescaling equation is completely determined and it is equivalent to \eqref{eq:LC_a} using the above scaling relation. 


The dynamic rescaling formulation was introduced in \cite{mclaughlin1986focusing,  landman1988rate} to study the self-similar blowup of the nonlinear Schr\"odinger equations. This formulation is closely related to the modulation technique in the literature. 
It has been a very effective tool to analyze singularity formation for many problems like the nonlinear Schr\"odinger equation \cite{kenig2006global,merle2005blow}, compressible fluids \cite{buckmaster2019formation,buckmaster2019formation2,merle2022implosion1,merle2022implosion2}, the nonlinear wave equation \cite{merle2015stability}, the nonlinear heat equation \cite{merle1997stability}, 
and other dispersive problems \cite{martel2014blow}. Recently, this method has been applied to study singularity formation in incompressible fluids with $C^{1,\al}$ data \cite{chen2019finite2,elgindi2019finite} and with smooth data \cite{ChenHou2023a,ChenHou2023b} and related models \cite{chen2019finite,chen2020slightly,chen2021regularity,chen2020singularity,chen2021HL}.

If there exists $C>0$ such that   $c_{\om}(\tau) \leq -C <0$ and $F$ is nontrivial, e.g. $ || F(\tau, \cdot) ||_{L^{\infty}} \geq C >0$ for all $\tau >0$, 
using a rescaling argument, we yield finite time blowup \cite{chen2019finite,chen2019finite2}. 

To simplify the notation, we use $t$ for $\tau$, and $f, g$ for $F, G$. With these simplifications, we can rewrite the above rescaling equation as follows 
\beq\label{eq:LC_a_rescal}
 \pa_{t} f + c_l v \cdot \na f 
 = Q(f, f) + (\al -1) c(f) f + c_{\om} f.
\eeq


\subsection{Approximate steady state and normalization conditions}


Following the ideas in \cite{chen2020slightly,chen2021regularity} and in Section \ref{sec:idea_ASS}, we construct the approximate steady state $\bar f(v) = \exp(-|v|^2)$.

\vspace{0.1in}
\paragraph{\bf{Normalization conditions}}
To determine the scaling parameters $c_l, c_{\om}$, we impose the following normalization conditions on the solution to \eqref{eq:LC_a_rescal}
\beq\label{eq:normal0}
 \int_{\R^3} f(t) |v|^k d v = \int_{\R^3} \mu |v|^k d v, \quad k = 0, 2. 
\eeq
and only consider initial data with $f_0 - \mu$ orthogonal to $1, |v|^2$.



Since $1, |v|^2 $ are the collision invariants $\int Q(f, f) |v|^k d v , k=0,2$, to impose \eqref{eq:normal0} in \eqref{eq:LC_a_rescal}, using the evolution of 
$\int f |v|^k d v $ and \eqref{eq:LC_a_rescal}, we obtain the following conditions for $c_l(t), c_{\om}(t)$, 
\[
- (k+ 3) c_l \int f |v|^k d v =  c_l \int v \cdot  \na f \cdot |v|^k d v = c_{\om} \int f |v|^k d v + (\al-1) \int  c(f) f |v|^k d v , \quad k = 0, 2.
\]
Solving the above equations, we derive the normalization conditions for $c_l, c_{\om}$
\beq\label{eq:normal}
\bal
& c_{\om} = \f{\al-1}{2} \B( - 5 \f{ \int c(f) f  }{ \int f  } + 3 \f{ \int c(f) f |v|^2 }{ \int f |v|^2 }   \B) \teq C_1(f) (\al-1),  \\
& c_l = \f{\al-1}{2} \B(    \f{ \int c(f) f  }{ \int f  } - \f{ \int  c(f) f |v|^2 }{ \int f |v|^2 }    \B) \teq C_2(f)(\al-1).
\eal
\eeq
where the above integrals are over $v\in \R^3$. We will estimate $C_1(f), C_2(f)$ in Lemma \ref{lem:mono}.

For $ \al>1$ with $\al-1$ sufficiently small, we will prove that $f = \bar f + \td f$ with perturbation $\td f$ small in some norm. As a result, we have $C_1(f)  \approx C_1(\bar f) < 0, C_2(f) \approx C_2(\bar f) > 0$. 
Although $|\al -1|$ is extremely small, we can obtain the crucial sign conditions: $c_l \asymp |\al -1|, c_{\om} \asymp -|\al-1|$ 
uniformly for all $t>0$. Then using a rescaling argument, we can obtain finite time blowup of \eqref{eq:LC_a}. An argument similar to the above has been developed by the author in \cite{chen2020slightly,chen2021regularity}.


\vspace{0.1in}
\paragraph{\bf{Linearized equation}}

We choose the following approximate steady state $(\bar f, \bar c_l, \bar c_{\om})$ for \eqref{eq:LC_a_rescal}
\beq\label{eq:ASS}
\bar f = \exp( -|v|^2 ), 
\quad \bar c_{\om} = C_1(\bar f)(\al-1), \quad \bar c_l = C_2(\bar f)(\al -1).
\eeq

Linearizing \eqref{eq:LC_a_rescal} around $(\bar f, \bar c_l, \bar c_{\om}))$, we yield the equation for the perturbation $f$
\beq\label{eq:lin}
\bal
& \pa_t f  = \cL_a f  + N(f) + N(\bar f), \\
& \cL_a = - \bar c_l v \cdot \na f + \bar c_{\om} f
+ Q(f, \bar f) + Q(\bar f, f) 
+ (\al - 1) ( c(\bar f) f + c(f ) \bar f),
 \eal
\eeq
where $c_l, c_{\om}$,  the nonlinear terms $N(f)$ and the error terms $N(\bar f)$ are given by
\beq\label{eq:lin_terms}
\bal
c_l & = (\al -1) ( C_2(f + \bar f) - C_2(\bar f) ), \quad c_{\om} = (\al -1) (C_1(f + \bar f) - C_1(\bar f)), \\
N(f) & =  - c_l v \cdot \na f + c_{\om} f + Q(f, f) + (\al -1) c(f) f , \\
N(\bar f)  & = - ( \bar c_l + c_l ) v \cdot \na \bar f + ( \bar c_{\om} + c_{\om}) \bar f + (\al -1) c(f) f ,
\eal
\eeq
where we have used $Q(\bar f, \bar f) = 0$ to simplify $N(\bar f)$, and put the linear part $c_l v \cdot \na \bar f, c_{\om} \bar f$ to $N(\bar f)$ since it is very small.

\vspace{0.1in}
\paragraph{\bf{Coulomb potential with radial symmetry}}
In the case of Coulomb potential with radially symmetric solution $f$, we have $Q(f, f) = -\pa_{ij}(-\D)^{-2} \pa_{ij} f + f^2 $, and can rewrite it  as follows 
\beq\label{eq:Q_1D}
  Q(f, f) = -  f_{rr}  g_{rr} - \f{2}{r^2} f_r g_r + f^2,
\quad g = (-\D)^{-2} f ,
\eeq
where the formulas of $g_r, g_{rr}$ 
are given in \eqref{eq:BSlaw2},\eqref{eq:BSlaw3}. We can rewrite $N(f), N(\bar f), \cL_{\al} $  as follows  
\beq\label{eq:lin_terms_rad}
\bal
 \cL_{\al} & =  - \bar c_l r \pa_r f  + \bar c_{\om} f  - f_{rr} \bar g_{rr} - \f{2}{r^2} f_r \bar g_r 
  - \bar f_{rr} g_{rr} - \f{2}{r^2} \bar f_r g_r + 2 \al \bar f f    ,  \\
N(f) & = -  c_l r \pa_r f  -  f_{rr} g_{rr} - \f{2}{r^2} f_r g_r  + \al f^2 + c_{\om} f, \\
 N(\bar f) &= -  (\bar c_l + c_l) r \pa_r \bar f  + (\al -1) \bar f^2 + ( \bar c_{\om} + c_{\om} ) \bar f .
\eal
\eeq


We have the following important inequalities about $\bar c_{\om}, \bar c_l$ and  $|\bar c_{\om}/ \bar c_l|$. 
We refer more discussion of the relation between $|\bar c_{\om}/ \bar c_l|$ and decay rates of the profile to Section \ref{sec:intro_rate}, and the relation between $|\bar c_{\om}/ \bar c_l|$ and energy estimates for stability to \eqref{eq:Ladd_gam} and Section \ref{sec:lin_nonrad}.

\begin{lem}\label{lem:mono}
Let $C_i(f),\bar c_l, \bar  c_{\om}$  be defined in \eqref{eq:normal}, \eqref{eq:ASS}. For $\g \in [-3, 0)$, we have 
\beq\label{eq:ASS0}
C_1(\bar f ) < 0, \quad C_2(\bar f ) > 0, \quad C_1(\bar f) + 5  C_2(\bar f) < 0,  \quad 
\bar c_{\om}(\g) < 0,  \ \bar c_l(\g) >0,  \ |\bar c_{\om} / \bar c_l| > 5. 
\eeq
In particular, for $\g = -3$, we have $ |C_1(\bar f ) / C_2(\bar f)|  = |\bar c_{\om} / \bar c_l| = 7$. 
\end{lem}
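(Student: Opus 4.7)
The plan is to reduce everything to the two dimensionless ratios
\[
A \teq \frac{\int_{\R^3} c(\bar f)\,\bar f\, dv}{\int_{\R^3} \bar f\, dv}, \qquad
B \teq \frac{\int_{\R^3} c(\bar f)\,\bar f\,|v|^2\, dv}{\int_{\R^3} \bar f\,|v|^2\, dv},
\]
so that the definitions in \eqref{eq:normal} give $C_1(\bar f) = \tfrac12(-5A + 3B)$, $C_2(\bar f) = \tfrac12(A-B)$, together with the algebraic identities $C_1(\bar f)+5C_2(\bar f) = -B$ and $|C_1(\bar f)/C_2(\bar f)| = (5A-3B)/(A-B)$. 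I claim it suffices to show $A > B > 0$: then $C_2 > 0$; $C_1 + 5C_2 = -B < 0$ forces $C_1 < -5C_2 < 0$; and $(5A-3B)-5(A-B) = 2B > 0$ gives the strict lower bound $|C_1/C_2| > 5$. The sign statements for $\bar c_{\om},\bar c_l$ then follow because $\alpha - 1 > 0$.

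To establish $A > B > 0$, I would first show that $c(\bar f)$ is a positive, radially symmetric function that is strictly decreasing in $|v|$. By direct differentiation of $\phi^{ij}$ one verifies, for $\gamma > -3$ and $v\neq 0$, the pointwise identity $\partial_{ij}\phi^{ij}(v) = -\tfrac{\gamma+3}{4\pi}|v|^{\gamma}$; combined with the Coulomb identity $c(f) = f$ recorded in the introduction, this yields
\[
c(\bar f)(v) = \tfrac{\gamma+3}{4\pi}\,(|v|^{\gamma} * \bar f)(v) \quad \text{for } \gamma \in (-3,0),
\qquad c(\bar f)(v) = \bar f(v) \quad \text{for } \gamma = -3.
\]
In particular $c(\bar f) > 0$, so $B > 0$ is immediate. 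Monotonicity is trivial when $\gamma = -3$; for $\gamma \in (-3,0)$ it follows from the classical fact that the convolution of two radially symmetric, strictly decreasing, positive functions is radially symmetric and strictly decreasing (Riesz rearrangement). This is the one genuinely analytic step I expect to demand the most care.

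With monotonicity in hand, $A > B$ drops out of a $v \leftrightarrow w$ symmetrization. Setting $M = \bigl(\int \bar f\,dv\bigr)\bigl(\int \bar f|v|^2\,dv\bigr) > 0$, one computes
\[
2M(A-B) = \int\!\!\int_{\R^3\times\R^3} \bar f(v)\bar f(w)\bigl(c(\bar f)(v) - c(\bar f)(w)\bigr)\bigl(|w|^2 - |v|^2\bigr)\, dv\, dw,
\]
and the integrand is pointwise nonnegative because the two factors $c(\bar f)(v) - c(\bar f)(w)$ and $|w|^2 - |v|^2$ carry the same sign; strict monotonicity of $c(\bar f)$ makes it strictly positive on a set of positive measure, giving $A > B$.

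Finally, for the sharp value in the Coulomb case, $c(\bar f) = \bar f$ turns $A$ and $B$ into ratios of elementary Gaussian integrals. Using $\int e^{-a|v|^2}\,dv = (\pi/a)^{3/2}$ and $\int e^{-a|v|^2}|v|^2\,dv = \tfrac{3}{2a}(\pi/a)^{3/2}$ with $a=1,2$ gives $A = 2^{-3/2}$ and $B = 2^{-5/2} = A/2$, so $|C_1(\bar f)/C_2(\bar f)| = (5A-3B)/(A-B) = 7$, and the remaining inequalities in \eqref{eq:ASS0} are automatic.
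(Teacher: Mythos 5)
Your proof is correct, and it reaches the same central insight as the paper — that $c(\bar f)$ is radially symmetric and decreasing in $|v|$, hence negatively correlated with $|v|^2$ against the measure $\bar f\,dv$ — but both sub-steps are executed differently. For monotonicity, the paper does not invoke rearrangement: it computes $v\cdot\nabla c(\mu)$ directly, shifts $w\mapsto v+w$, symmetrizes $w\mapsto -w$, and reads off the sign from $(|v+w|^{\g}-|v-w|^{\g})\,v\cdot w\le 0$ (using $|v+w|^2-|v-w|^2=4v\cdot w$ and $\g<0$). That is entirely elementary, whereas your appeal to the Riesz-rearrangement fact that a convolution of radially decreasing functions is radially decreasing is classical but, for strict monotonicity, requires the layer-cake/ball-intersection argument you only gesture at; if you want the proof self-contained, the paper's symmetrization is the cheaper route. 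For the step from monotonicity to $Q_2/P_2<Q_0/P_0$, the paper integrates by parts using $v\cdot\nabla\mu=-2|v|^2\mu$ to get $P_2=\tfrac32 P_0$ and $Q_2=\tfrac12\int\bigl((v\cdot\nabla c(\mu))\mu+3c(\mu)\mu\bigr)<\tfrac32 Q_0$, which exploits the Gaussian structure; your Chebyshev-type double-integral identity
\[
2P_0P_2(A-B)=\iint \bar f(v)\bar f(w)\bigl(c(\bar f)(v)-c(\bar f)(w)\bigr)\bigl(|w|^2-|v|^2\bigr)\,dv\,dw\ge 0
\]
is more robust (it would work for any radially decreasing $c(\bar f)$ and any reference density, not just the Maxwellian) and is arguably cleaner. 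The algebraic reductions ($C_1+5C_2=-B$, the ratio formula, and the explicit Gaussian computation giving $A=2B=2^{-3/2}$ for $\g=-3$) agree with the paper.
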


\begin{proof}

Denote 
\[
Q_k = \int c(\mu) \mu |v|^k d v , \quad  P_k = \int \mu |v|^k d v, \ k = 0 ,2  , 
\]

\textbf{Case 1: $\g = -3$.}  
Using a change of variable $v \to \sqrt{2} v$, for $k=0, 2$, we yield 
\[
\bal
 & 
\int \exp(-|v|^2) |v|^k d v 
= 2^{ (k+3)/2} \int \exp(- 2 |v|^2) |v|^k d v, \ 
  \f{Q_2}{P_2}= 2^{- \f{5}{2} }, \quad \f{Q_0}{P_0} = 2^{- \f{3}{2}}, \\
& C_1( \bar f)  =  \f{1}{2}( - 5 \f{Q_0}{P_0} + 3 \f{Q_2}{P_2} )
 = - \f{7}{8 \sqrt{2}} ,
\quad C_2(\bar f)  = \f{1}{2}( \f{Q_0}{P_0}  - \f{Q_2}{P_2}   )
 = \f{1}{8 \sqrt{2}}  .
 \eal
\]

The desired estimate follows.

\vspace{0.1in}
\textbf{Case 2: $\g \in (-3, 0)$.}  The proof is based on the following inequality 
\beq\label{eq:cl_cw_mon}
v \cdot \na   c(\mu)  \leq 0 , \quad \g \in (-3, 0), \quad \mu = \exp(-|v|^2),
\quad c(\mu) \teq c_{\g} |v|^{\g} \ast  \mu.
\eeq
for some $c_{\g} > 0$,  with equality only at $v= 0$. 
A direct computation yields 
\[
I = v \cdot \na   c(\mu)  =
c_{\g}\int |w|^{\g}  v \cdot \na_v \exp(-|v-w|^2) d w 
=  -2 c_{\g}  \int |w|^{\g  }  v \cdot (v-w) \exp(-|v-w|^2) d w 
\]

Using a change of variable $w \to v + w$ and then symmetrizing the integral, we get 
\[
I =  2 c_{\g} \int |v + w|^{\g} v \cdot w \exp( -|w|^2) d w
 = c_{\g} \int  (|v+w|^{\g} - |v-w|^{\g})  v \cdot w \exp( - |w|^2) d w.
\]
Since $|v+w|^2 - |v-w|^2 = 4 v \cdot w, \g < 0$, we derive $ (|v+w|^{\g} - |v-w|^{\g})  v \cdot w \leq 0$ and $I \leq 0$. Clearly, $I = 0$ if and only if $v = 0$.



Now, we are in a position to prove Lemma \ref{lem:mono} with $\g > -3$. Since $v  \cdot \na \mu = - 2 |v|^2 \mu$, Using integration by parts, we have 
\[
\bal
P_2 & = \int \exp(-|v|^2) |v|^2 d v 
= - \int \f{1}{2} v\cdot \na \mu d v = \f{1}{2} \int  \mu \na \cdot v  d v = \f{3}{2} P_0,  \\
Q_2 & = -\f{1}{2} \int c(\mu) v \cdot \na \mu  d v 
= \f{1}{2} \int \na \cdot ( c(\mu) v ) \mu d v
= \f{1}{2} \int ( (v \cdot \na c(\mu) ) \mu + 3 c(\mu) \mu ) d v .
\eal
\]
Using \eqref{eq:cl_cw_mon}, we obtain 
\[
Q_2 < \f{3}{2} \int c(\mu)  \mu d v = \f{3}{2} Q_0. 
\]
It follows 
\[
\f{Q_2}{Q_0} < \f{P_2}{P_0}, \quad  \f{Q_2}{P_2} < \f{Q_0}{P_0}  .
\]

Clearly, we have $P_k , Q_k > 0$. Hence, we obtain 
\[
C_1( \bar f, \g)  =  \f{1}{2}( - 5 \f{Q_0}{P_0} + 3 \f{Q_2}{P_2} ) < 0,  
\quad C_2(\bar f,\g)  = \f{1}{2}( \f{Q_0}{P_0}  - \f{Q_2}{P_2}   ) > 0,
\quad  C_1 + 5 C_2 
=  \f{1}{2}  \cdot (-2 \f{Q_2}{P_2} ) < 0.
\]
Combing the above estimates and the conditions \eqref{eq:normal}, \eqref{eq:ASS0}, we prove Lemma \ref{lem:mono}.
\end{proof}


\section{Linear stability for Coulomb potential with radial symmetry}\label{sec:lin}

In this section, we perform linear stability analysis of \eqref{eq:lin} with Coulomb potential 
$\g = -3$  and prove Theorem \ref{thm:coer_poly}. Throughout this section, we consider radially symmetric solution.


The main part of the linearized operator of $\cL_a$ \eqref{eq:lin}, \eqref{eq:Q_1D} is $\cL_1$ 
\beq\label{eq:lin_op}
\cL_1 =  - f_{rr} \bar g_{rr} - \f{2}{r^2} f_r \bar g_r 
- \bar f_{rr} g_{rr} - \f{2}{r^2} \bar f_r g_r +
2  \bar f f = Q(\bar f, f) + Q(f, \bar f).
\eeq
Our goal is to establish $L^2(W)$ coercive estimates of $\cL_1$ for some weight $W$ growing polynomially and $f$ radially symmetric.
We refer the ideas of the coercive estimates to Section \ref{sec:ideas} and \ref{sec:ansatz}.



\vspace{0.1in}
\paragraph{\bf{Notations}}

Denote $r = |v|$. By abusing the notations and to simplify the notations, for any radially symmetric function $f(v) = F(|v|), v\in \R^3$ for some $F$, we will write $f(r)$ for $F(r)$.  
We use $\pa_r$ for the radial derivative. Under these notations, when we write
\[
\pa_i f = \pa_i |v| \cdot \pa_r f  = (v_i / r) \pa_r f  
\]
it should be interpreted as $\pa_{v_i} f(v) = \f{v_i}{r} \pa_r F$. 
We will write similar identities for radially symmetric functions. 
Denote by $\la \cdot , \cdot \ra  $ the standard inner product 
in $\R^3$: $\la f , g \ra = \int_{\R^3} f g d v $.
For a radially symmetric function $f$ in $\R^3$, we have 
\beq\label{eq:int_rad}
\int_{\R^3} f(v)  d v
= 4 \pi \int_0^{\infty} f(r)  r^2 dr  .
\eeq

For Coulomb potential $\g = -3$ \eqref{eq:LC}, denote 
\beq\label{eq:nota1}
\bal
 &\phi^{ij}(v) \teq  \f{1}{8\pi}  (\d_{ij} - \f{v_i v_j}{|v|^2} ) \f{1}{|v|}, \quad
 a_{ij}(f) = -\pa_{ij} (-\D)^{-2} f, \quad c(f) = f, \\
 &   \mu(v) = e^{- |v|^2}, \quad 
   \s^{ij}(v) \teq \phi^{ij} \ast \mu  = - \pa_{ij} \bar g ,  \quad \bar g = (-\D)^{-2}  \mu.
   \eal
\eeq

\vspace{0.1in}
\paragraph{\bf{Weights }}

In the rest of the paper, we use $\rho(v), \rho_i(v)$ to denote radially symmetric weights. In this section, $ \lam,  q, \eta$ are intermediate variables related to a weight $\rho$
\beq\label{eq:nota2}
q \teq \f{\pa_r \rho}{r}, \quad \eta \teq \f{ q^{\prime}}{ 2 r q}, 
\quad \lam = \f{\rho_r}{\rho} = \f{q r}{\rho} .
\eeq
If $ \rho = e^{r^2}$, we yield $q = 2 e^{r^2} = 2 \rho, \eta = 1 , \lam = 2 r, 
\rho + \f{q}{2} - \f{\lam \rho}{r} = 0$.

\vspace{0.1in}
\paragraph{\bf{Some identities for Coulomb potential}}

Using \eqref{eq:deri}, we have the following useful identity 
\beq\label{eq:Gauss_id1}
\bal
& v_i v_j \s^{ij} = - v_i v_j \pa_{ij} \bar g 
= - v_i v_j \f{v_i v_j}{r^2} \bar g_{rr}
 = - r^2 \bar g_{rr} >0, \\
& (r^3 \bar g_{rr})_r = 3 r^2 \bar g_{rr} + r^3 \bar g_{rrr}
=  - r^2 B_1(\bar f) = - \f{1}{2} r^2 \bar f , 
 \eal
\eeq
where we have used \eqref{eq:BSlaw3} and 
\[
B_1 = \int_r^{\infty} e^{-r^2} r dr = -\f{1}{2} e^{-r^2} \B|_r^{\infty} =  \f{1}{2}  \bar f, 
\]
in the second identity. We remark that the second identity is a special property for Coulomb potential since $ (r^3 \bar g_{rr})_r$ decays much faster than $ r^3 \bar g_{rr} \sim O(1)$. 

\vspace{0.1in}
\paragraph{\bf{Operators and quadratic forms}}

Using the above notations, we can rewrite the collision operator \eqref{eq:Q} as follows 
\[
Q(f, g) = \pa_i \int_{\R^3} \phi^{ij}(v - \td v ) ( f( \td v) \pa_j g(v) - g(v) \pa_j f(\td v)   )
d \td v 
= \pa_i \B(  ( \phi^{ij}\ast f) \pa_j g  \B) -
\pa_i \B( g \cdot ( \phi^{ij}\ast \pa_j f)  \B)
\]

From Section 2 in \cite{guo2002landau}, linearizing $Q(f + \bar f, f +\bar f)$ around $\bar f$, 
we obtain the following formulas for the linearized operator $\cL_1$ \eqref{eq:lin_op}, its local part, and nonlocal part 
\beq\label{eq:lin_op2}
\bal
&\cL_1 f =   \cL_{loc} + \cL_{nloc},  \quad \cL_{loc} = Q(\bar f , f),  \quad  \cL_{nloc} =  Q(f, \bar f) , \\
& \cL_{loc}  f = \pa_j ( \s^{ij}( \pa_i f + 2 v_i f )  )  ,  
\quad   \cL_{nloc}  f 
= -  \pa_i ( \mu \phi^{ij} \ast ( \pa_j f + 2 v_j f ) ), 
\eal
\eeq
where $i,j$ sum over $1,2,3$. We do not study the normalized perturbation $g = \mu^{-1/2} f $ as \cite{guo2002landau}.

We will use $Q_{\rho}, J_{\rho}$ to denote some quadratic forms related to $Q(f, f)$, and $\td J_{\rho}, \td{\td{J_{\rho}}},  J_{\e, \rho}$ the modifications of $J_{\rho}$.

\subsection{Ansatz of coercive estimates and modified quadratic forms}\label{sec:ansatz}

In the rest of this section, we consider radially symmetric perturbation $f$ and weight $\rho$. Our goal is to construct the weight $\rho \leq e^{r^2}$ perturbed from $\mu^{-1}$ with 
\beq\label{eq:wg_ansatz}
\rho(r) = \mu^{-1} = \exp(r^2) , \ r \leq R_1, \quad \rho(r) \sim C r^k , \ r  \geq R_2, \ k > 2 
\eeq
for some large $R_1 < R_2$, such that 
\beq\label{eq:coer_ansatz}
\bal
  &- \la (\cL_{loc} + \cL_{nloc}) f, f \rho  \ra_{\R^3}  =
  Q_{\rho} + l.o.t. , \quad    F_{\rho, i} = \pa_i ( f \rho ), \\
& Q_{\rho} \teq
   \f{1}{2} \int_{\R^3} \int_{\R^3}  M_{\rho}(v, \td v) ( F_{\rho, i}(v) - F_{\rho,i}(\td v) )( F_{\rho,j}(v) - F_{\rho,j}(\td v)) \phi^{ij}(v - \td v) d v d \td v  ,
  \eal
\eeq
for some functions $ M_{\rho} \geq 0$ with $M_{\rho}(v, \td v) = M_{\rho}(\td v , v)$, where  l.o.t. is some lower order terms specified in \eqref{eq:lin_coer}.
Since $ \{ \phi^{ij} \}_{ij}$ \eqref{eq:nota1} is positive semi-definite and 
\[
 b_i  b_j \phi^{ij} \geq 0, \quad b_i = F_{\rho, i}(v) - F_{\rho,i}(\td v),
\]
the integrand in $Q_{\rho}$ is non-negative.  In the case of $\rho = \mu^{-1}$, \eqref{eq:coer_ansatz} is an exact identity with
\[
M_{\mu}(v, \td v) =  \mu(v) \mu(\td v), \quad 
F_{\mu, i} = \pa_i( f \mu^{-1} ), \quad l.o.t. = 0.
\]

We will only choose $\rho$ to modify $ \mu^{-1}$ in the far-field \eqref{eq:wg_ansatz} and expect that $M_{\rho}, F_{\rho,i}$ coincides with the above forms in the near-field. The diagonal part 
\beq\label{eq:coer_ansatz2}
\f{1}{2} \int_{\R^3} \int_{\R^3} M_{\rho}(v, \td v)( F_{\rho, i}(v) F_{\rho,j}(v) +F_{\rho, i}( \td v) F_{\rho,j}(\td v)  ) \phi^{ij}(v - \td v) 
\eeq
comes from the local part $\cL_{loc}$, and the remaining cross terms 
\beq\label{eq:coer_ansatz3}
- \f{1}{2}  \int_{\R^3} \int_{\R^3} M_{\rho}(v, \td v)( F_{\rho, i}(v) F_{\rho,j}(\td v) +F_{\rho, i}( \td v) F_{\rho,j}( v)  ) \phi^{ij}(v - \td v) 
\eeq
come from the nonlocal part $\cL_{nloc}$. The function $F_{\rho,i}$ and $M_{\rho}$ will be constructed in \eqref{eq:Jrho_comp0}, \eqref{eq:Jrho_comp3}, \eqref{eq:E1_loss_loc2}.

To establish \eqref{eq:coer_ansatz}, our benchmark is the coercive estimate $Q_{\mu^{-1}} \geq 0$.
We will perform several symmetrizations of the integrals and comparisons between the estimates for $Q_{\rho}$ and $Q_{\mu^{-1}}$. We modify the estimates in $\la \cL_{loc} f , f \rho \ra, \la \cL_{nloc} f, f \rho \ra$ in several steps with a small loss in each step, and eventually obtain the forms \eqref{eq:coer_ansatz2}, \eqref{eq:coer_ansatz3}, respectively. 


%

\subsection{Coercive estimate of the local part} 

Using \eqref{eq:lin_op2} and integration by parts, and then identities for radially symmetric functions $\pa_i \rho(v) = \f{v_i}{|v|} \rho_r$ \eqref{eq:Gauss_id1}, \eqref{eq:int_rad}, we get 
\beq\label{eq:lin_op3}
\bal
  - \int_{\R^3} \cL_{loc} f (v) \cdot f (v) \rho(v) dv  
= &  \int_{\R^3} \s^{ij} (\pa_i f +2 v_i f  )
\pa_j( f \rho) dv  \\
 = &    \int_{\R^3} \f{v_i v_j}{|v|^2} \s^{ij}
 (\pa_r f + 2 r f ) (\pa_r f \rho + f \pa_r \rho) d v  = 4 \pi J_{\rho}(f, f), \\
 \quad 
 J_{\rho}(f, f) \teq &  \int_0^{\infty} (- \bar g_{rr} r^2) (\pa_r f + 2 r f ) (\pa_r f \rho + f \pa_r \rho) d r .
 \eal 
\eeq

Firstly, we establish that $J_{\rho}$ is non-negative up to a small loss in the far-field 
by carefully designing $\rho$ with the properties\eqref{eq:wg_ansatz}. Then we will show that $J_{\rho}$ is close to the form \eqref{eq:coer_ansatz2} with a small loss. 
If $\rho = e^{r^2}$, since $ \pa_r\rho=  2 r \rho$ and $-\bar g_{rr} \geq 0$ \eqref{eq:Gauss_id1},  we obtain that $ J_{\rho}(f,f) \geq 0$ and $-\cL_{loc}$ is coercive. We drop the dependence of $J_{\rho}(f,f)$ on $f$ when there is no confusion. We will use the relation \eqref{eq:int_rad} between integrals over $\R_+$ and $\R^3$ for radially symmetric functions frequently.


Using \eqref{eq:lin_op3}, $\pa_r \rho = r q $ \eqref{eq:nota2}, and integration by parts, we derive 
\beq\label{eq:lin_op_Jloc1}
J_{\rho} = \int_0^{\infty} (-r^2 \bar g_{rr}) (\pa_r f)^2 \rho dr 
 + \int_0^{\infty} f^2 \B( ( r^3 \bar g_{rr} \rho)_r 
+ \f{ (\bar g_{rr} r^3 q )_r}{2} - r^2 \bar g_{rr} \cdot 2 r^2 q  \B) dr  \teq I + II.
\eeq
The first term $I$ is non-negative. 

Using \eqref{eq:Gauss_id1} and $r q = \rho_r$  \eqref{eq:nota2}, we reformulate the coefficient in $II$ as follows 
\[
\bal
( r^3 \bar g_{rr} \rho )_r 
+ \f{ (\bar g_{rr} r^3 q )_r}{2} - r^2 \bar g_{rr} \cdot 2 r^2 q 
&= (r^3 \bar g_{rr})_r ( \rho + \f{q}{2}) 
+ (-\bar g_{rr} r^3) ( 2 r q - \f{q^{\prime}}{2}- \rho_r) \\
& =   - \f{1}{2}  \bar f r^2 \cdot ( \rho + \f{q}{2})
+ (-\bar g_{rr} r^3) (  r q - \f{q^{\prime}}{2}) \\
\eal
\]

We want to compare $J_{\rho}$ with the non-negative form 
\beq\label{eq:lin_op_Jloc2}
\td J_{\rho} 
\teq \int_0^{\infty} (-\bar g_{rr} r^2) (\pa_r f + \lam f)^2 \rho d r
\eeq
for some $\lam$ depending on $\rho$ to be determined. See \eqref{eq:lin_op_lam}. Expanding the square and then performing integration by parts, we can rewrite $\td J_{\rho}$ as follows
\[
\bal
 \td J_{\rho} 
& = \int (-\bar g_{rr} r^2) (\pa_r f)^2 \rho  +
(-\bar g_{rr} r^2  \rho ) \cdot ( 2 \pa_r f \cdot  \lam f + \lam^2 f^2 ) dr \\
& = \int 
(-\bar g_{rr} r^2) (\pa_r f)^2 \rho dr 
+\int  (\bar g_{rr} r^3 \cdot \f{\lam \rho}{r} )_r f^2 
+  \lam^2 (-\bar g_{rr} r^2) f^2 \rho dr \teq \td I + \wt{II}.
\eal
\]

Using \eqref{eq:Gauss_id1}, we can simplify the coefficient of $f^2$ in $\wt{II}$ as follows 
\[
(\bar g_{rr} r^3)_r \cdot  \f{\lam \rho}{r}
 + (\bar g_{rr} r^3) \cdot ( \f{\lam \rho}{r} )_r 
 + \lam^2 (-\bar g_{rr} r^2 \rho)
 = - \f{1}{2} r^2 \bar f \cdot \f{\lam \rho}{r} 
 + (- \bar g_{rr} r^3) \cdot \B(  -( \f{\lam \rho}{r} )_r  + \f{\lam^2 \rho}{r}  \B).
\]

Note that $\td I = I$. We yield 
\beq\label{eq:J_comp}
J_{\rho} - \td J_{\rho}
= II - \wt {II} = 
\int (-\f{1}{2}\bar f r^2)( \rho + \f{q}{2} - \f{\lam \rho}{r} ) f^2
+ (-\bar g_{rr} r^3) \B\{ r q - \f{q^{\prime} }{2} -  \B( -( \f{\lam \rho}{r} )_r  + \f{\lam^2 \rho}{r}  \B)   
\B\} f^2 .
\eeq

We will choose $\rho =\exp(r^2) $ for $r \leq R_1$ with $R_1$ sufficiently large and $ \lam  = \f{qr}{\rho}$ \eqref{eq:lin_op_lam}. Then the integrand in the first part in $J_{\rho} - \td J_{\rho} $ is $0$ for $r \leq R_1$ since $\rho + \f{q}{2} - \f{\lam \rho}{r} = 0$ \eqref{eq:nota2}. 
Beyond $R_1$,  
the first term reduces to 
\beq\label{eq:J_comp1}
 \int - \f{1}{2} \bar f r^2 (\rho + \f{q}{2}- q ) f^2 
 =  \int_{R_1}^{\infty}  \f{1}{2} \bar f r^2 (  \f{q}{2} - \rho) f^2  , 
\eeq
which is very small since $\bar f$ decays exponentially fast.  We defer the estimate to \eqref{eq:E1_loss2}.

\subsubsection{Pointwise comparison and design of $\rho$}

For the second term in $J_{\rho} - \td J_{\rho}$, we want to obtain that it is non-negative up to a similar small term by establishing 
\beq\label{eq:lin_op_ineq1}
r q - \f{q^{\prime}}{2} \geq 
  -( \f{\lam \rho}{r} )_r  + \f{\lam^2 \rho}{r} 
  + l.o.t.,
\eeq
where $l.o.t.$  denotes lower order terms, which grows at most polynomially. 

\begin{remark}
Below, we will first derive $\lam, q, \rho$ and then specify the lower order terms. 
Most inequalities below are equality for $\rho = \exp(|v|^2)$ with \eqref{eq:nota2}
and most $l.o.t.$ terms are $0$ for $\rho = \exp(|v|^2)$. 
We perturb $\rho$ from $\mu^{-1}$ carefully to achieve \eqref{eq:wg_ansatz}.
\end{remark}

We define 
\beq\label{eq:lin_op_lam}
\lam \teq \f{ \rho_r}{\rho} = \f{q r}{\rho}, 
\quad \lam_i \teq \f{ \pa_i \rho}{\rho} = \f{v_i}{|v|} \f{ \rho_r}{\rho} = \f{v_i}{|v|} \lam.
\eeq
Using \eqref{eq:nota2}, we get 
\[
 \f{\lam \rho}{r} = \f{\rho_r}{r} = q , \quad \f{\lam^2 \rho}{r} = \f{q^2 r}{\rho} .
\]

With the above choice of $\lam$, we can reduce \eqref{eq:lin_op_ineq1} to
\[
 r q + \f{q^{\prime}}{2} \geq \f{q^2 r }{\rho} + l.o.t.
\]

We will choose $q$ such that
\beq\label{eq:lin_op_con1}
0 \leq q^{\prime }  = 2 r q \eta
\eeq
where we use \eqref{eq:nota2}. Due to the asymptotics \eqref{eq:wg_ansatz} of $\rho$ in the far-field, we yield 
\beq\label{eq:eta_ineq1}
q \sim C r^{k-2}, \quad \eta = \f{k-2}{2} r^{-2}, \quad k > 2,
\eeq
for $r > R_1 $ large enough.

Since $\rho,  r q + \f{q^{\prime}}{2} > 0 $, to get the inequality above \eqref{eq:lin_op_con1},
we further consider 
\[
\rho \geq \f{q^2 r }{ q r +\f{q^{\prime} }{2} } + l.o.t.
= \f{ q^2 r}{ qr(1 + \eta)} + l.o.t.
 = \f{ q }{ 1 + \eta } + l.o.t.
\]

We will choose $\rho = e^{r^2}, q = 2 \rho, \eta = 1$ such that $\rho = \f{q}{1 + \eta}$ for $r \leq R_1$. Thus, to obtain the above inequality, we further consider 
\[
 \rho^{\prime} \geq \f{d}{dr} (  \f{ q }{ 1 + \eta } ) + l.o.t.
\]

From $ \rho^{\prime} = rq $ \eqref{eq:nota2}, we further consider
\beq\label{eq:undo3}
\bal
 r q  & \geq \f{ q^{\prime}(1 + \eta) -  q \eta^{\prime}}{ (1 +\eta)^2} + l.o.t.
= \f{ 2 r q \eta (1 + \eta) -  q \eta^{\prime}}{ (1 +\eta)^2} + l.o.t. \\
 r(1 + \eta)^2 & \geq 2 r \eta(1 + \eta) - \eta^{\prime} + l.o.t., \quad r(1 - \eta^2)  \geq - \eta^{\prime} + l.o.t.
 \eal
\eeq

The last constraint is an ODE inequality on $\eta$ only. In view of \eqref{eq:lin_op_con1}, we choose $\eta \geq 0$. We want to construct $\rho \leq \exp(r^2)$. From \eqref{eq:nota2} and its derivation below, we want $q \leq 2 e^{r^2}, \eta \leq 1$ for all $r \geq 1$. Thus, we focus on the above constraint on $\eta$ with $\eta \in [0, 1]$. 

Since $\eta \leq 1$, from \eqref{eq:nota2} and $q- 2 \rho =0, r \leq R_1$ \eqref{eq:wg_ansatz}, we yield 
\[
(q - 2 \rho)^{\prime} = q^{\prime} - 2 \rho^{\prime} 
= 2  r q \eta - 2 r q \leq 0, \quad q \leq 2 \rho. 
\]

To solve \eqref{eq:undo3}, we consider 
\beq\label{eq:undo4}
\bal
\eta^{\prime}  &\geq  r(\eta - 1) - e^{- 2 r^2} \geq r(\eta^2 - 1) - e^{- 2 r^2}.
\eal
\eeq
The first ODE is equivalent to 
\[
 \f{d}{dr} \B(  (\eta-1) e^{-r^2/2} \B)  \geq  - e^{-5 r^2/2 } .
\]	
In particular, we first construct $\eta_0$
\[
\bal
& (\eta_0 - 1 ) e^{-\min(r, R_2)^2/2} = - \one_{r \geq R_1} \int_{R_1}^{ \min(r, R_2)} e^{-5 s^2/2} ds,  \\
& \eta_0 = 1 -\one_{r \geq R_1}  \int_{ R_1}^{ \min(r, R_2)} e^{\min(r, R_2)^2/2 - 5 s^2/2} dr
\eal
\]
with $R_2$ satisfying 
\[
F(R_2) = 1, \quad F(r) \teq e^{r^2 / 2} \int_{R_1}^{r}  e^{-5 s^2/2} ds  
\]
Note that $F(R_1) = 0, F(\infty)=\infty$ and $F$ is increasing, there is a unique $R_2$ such that $F(R_2) = 1$. Moreover, since 
\[
F(  \sqrt{5} (R_1+1) ) > e^{ 5(R_1 +1)^2/2} \int_{R_1}^{R_1 + 1} e^{-5 s^2/2} ds > 1,
\]
we get $R_2 \leq \sqrt{5} (R_1 + 1)$. The above construction satisfies 
\[
\eta_0(r) = \one_{r \leq R_1} + \one_{r \in [R_1, R_2]} (1 - F(r)) .
\]

For $r \in [R_1, R_2]$, $\eta_0$ satisfies the first ODE in \eqref{eq:undo4}. Thus, we have 
\beq\label{eq:eta0_ineq1}
\eta_0^{\prime} \geq  r(\eta_0-1) - e^{- 2 r^2} \one_{r \in [R_1, R_2]}
 \geq r(\eta_0^2-1) - \cE, 
\quad \cE \teq   e^{- 2 r^2} \one_{r \in [R_1, R_2]}. 
\eeq

We will treat $\cE$ as the lower order term appeared in, e.g. \eqref{eq:undo3}.

Next, we construct $\eta$ based on $\eta_0$. Let $R_1^* \in [R_1, R_2]$ be the smallest solution of 
\[
\eta_0(r) = \f{k-2}{ 2 r^2 } .
\]

We choose $k \leq 20, R_1 > 10$ so that $ \f{k-2}{ 2 R_1^2 } < \f{1}{2}$. Since $d(r) = \eta_0(r) - \f{k-2}{ 2 r^2 } $ satisfies $d(R_1)>0 , d(R_2) < 0$, $R_1^*$ exists. To further impose the decay condition \eqref{eq:eta_ineq1}, we construct 
\beq\label{eq:eta0_ineq12}
 \eta(r) = \eta_0(r), \ r \in [R_1, R_1^*], \quad \eta(r) =  \f{k-2}{ 2 r^2 } , \quad r \geq R_1^*.
\eeq
Since $R_1^*$ is the smallest solution, we yield 
\beq\label{eq:eta0_ineq2}
\eta(r) = \eta_0(r) \geq \f{k-2}{ 2 r^2 } , \quad  r \in [R_1, R_1^*], 
\quad \eta(r) = \f{k-2}{ 2 r^2 } , \quad r \geq R_1^*.
\eeq

By choosing $R_1$ large enough, e.g. $R_1>10$, and restricting $k<20$, we get
\[
\eta(R_1^*) = \eta_0(R_1^*) = \f{k-2}{2 (R_1^{*})^2 } \leq \f{k-2}{2 R_1^2} < \f{1}{2} .
\]
As a result, for $r \geq R_1^* > 10$, we get $\eta(r) = \f{k-2}{2 r^2} < \f{1}{2}$,
\[
r(1 - \eta) + \eta^{\prime} \geq r \cdot \f{1}{2}
- \f{k-2}{r^3}
\geq r \cdot \f{1}{2} - \f{1}{2} \cdot \f{ 2}{r} > \f{r}{4}.	
\]

Hence, \eqref{eq:eta0_ineq1} still holds for $\eta$. Below, we specify all the lower order terms.


Using the above functions, \eqref{eq:eta0_ineq1},  and undoing the previous derivations \eqref{eq:undo3}, we yield 
\[
 \f{d}{dr}( \rho - \f{q}{1 + \eta} ) = rq - \f{2 r q \eta(1 + \eta) - q \eta^{\prime} }{(1 + \eta)^2}
 \geq r q \B( 1 - \f{2 \eta(1 + \eta)}{(1+\eta)^2}  \B)
 + \f{q r(\eta^2-1)}{ (1+\eta)^2} - \f{q \cE}{(1+\eta)^2}.
\]

The terms except for $\cE$ are canceled 
\[
1 - \f{2 \eta(1+\eta)}{(1+\eta)^2} + \f{(\eta^2-1)}{ (1+\eta)^2}
= \f{1 -\eta^2}{ (1 + \eta)^2 } + \f{\eta^2-1}{ (1 + \eta)^2 } = 0. 
\]

It follows 
\[
 \f{d}{dr}( \rho - \f{q}{1+\eta}) \geq - q \cE . 
\]

Since $\eta \leq 1$, solving the ODE \eqref{eq:nota2} with $q(r) = 2 e^{r^2}, r \leq R_1$, we get $q \leq 2 e^{r^2}$. 
Since $ \rho - \f{q}{1 + \eta} = 0$ at $r=0$ \eqref{eq:nota2}, \eqref{eq:wg_ansatz}, integrating $r$, we derive 
\[
\rho - \f{q}{1 + \eta} \geq - 2  \one_{r \geq R_1} \int_{R_1}^{\min(r, R_2)} e^{-s^2} ds 
\geq -  2\one_{r \geq R_1} (R_2 - R_1) e^{- R_1^2}
\teq -  \one_{r \geq R_1}  \e_2 ,
\]
where 
\beq\label{eq:E1_loss1_e2}
\e_2 \teq 2 (R_2 - R_1) e^{-R_1^2}. 
\eeq

Since $R_2 \leq \sqrt{5}( R_1 + 1)$, by choosing $R_1 \geq 1$ large enough, we can make $\e_2 $ very small. Using the above estimate and $ \f{q}{1+\eta} -\e_2 \geq \f{1}{2}-\f{1}{4} >0$ ,we derive 
\[
\bal
 rq + \f{ q^{\prime}}{2} - \f{q^2 r}{\rho}
= & rq (1 + \eta - \f{q}{\rho} )  \geq   r q \B(1 + \eta - \f{q}{ \f{q}{1 + \eta} - \one_{r \geq R_1 } \e_2  } \B) \\
= & r q \B( 1 + \eta - \f{q (1 + \eta)}{q - (1 + \eta) \one_{r \geq R_1} \e_2}   \B)
= - r q \cdot \f{ (1 + \eta)^2 \one_{r \geq R_1} \e_2}{ q - (1 + \eta) \one_{r \geq R_2} \e_2 } .
\eal
\]
Since $\e_2 \leq \f{1}{4} q, \eta \in [0, 1]$, we yield 
\beq\label{eq:E1_loss1}
rq + \f{ q^{\prime}}{2} - \f{q^2 r}{\rho} \geq - 4 r q \one_{r \geq R_1} \e_2  \cdot \f{1}{q/2}
 = - 8 r \one_{r \geq R_1} \e_2. 
\eeq
The right hand side is a small loss growing polynomially. We achieve \eqref{eq:lin_op_ineq1}.

\subsubsection{Estimate of the first term}

The estimate of \eqref{eq:J_comp1} is simple. Using $\eta \leq 1$, $(2 \rho - q)_r = 2 q r(1 - \eta) \geq 0$, and $\rho(R_1) = e^{R_1^2} = q(R_1)/2$, we yield 
\[
 0 \leq 2 \rho - q  \leq \one_{r \geq  R_1} 2 \rho 
 \leq \one_{r \geq R_1} 2 e^{r^2}. 
\]
It follows 
\beq\label{eq:E1_loss2}
 |  \int  \bar f r^2 (  \f{q}{2} - \rho) f^2 | 
 = \int \bar f r^2 (\rho - \f{q}{2}) f^2 d r 
 \leq  \int \one_{r \geq R_1} f^2 r^2 d r.
\eeq

Recall from \eqref{eq:lin_op_lam} that 
\[
r q - \f{q^{\prime}}{2} - \B(  -( \f{\lam \rho}{r} )_r  + \f{\lam^2 \rho}{r}  \B)
  =  rq + \f{ q^{\prime}}{2} - \f{q^2 r}{\rho} . 
\]

Plugging the estimates \eqref{eq:E1_loss1}, \eqref{eq:E1_loss2} in \eqref{eq:J_comp} and using the definition of $\td J_{\rho}$ \eqref{eq:lin_op_Jloc2}, we establish 
\beq\label{eq:E1_loss_loc1}
\bal
- \f{1}{4 \pi} \la \cL_{loc} f ,  f \rho \ra 
= J_{\rho} 
& \geq \td J_{\rho}
- \int 8 r (-\bar g_{rr} r^3) f^2 \one_{r \geq R_1} \e_2
- 2 \int_{r \geq R_1} f^2 r^2 d r \\
& \geq \td J_{\rho} - C  \int \one_{r \geq R_1} f^2 r^2 d r.
\eal
\eeq
for $R_1$ large enough, where $C$ is some absolute constant and we have used 
\[
|\bar g_{rr}| \les (1 + r)^{-3}, \quad \e_2 \les 1.
\]


\subsubsection{Asymptotics of $q, \rho$}

Since $\eta = \f{k-2}{2 r^2}, r \geq R_2$ in the far-field, from \eqref{eq:nota2}, we yield 
\[
q^{\prime } = (k-2) r^{-1} q , \quad q(r) = q(R_2),  \quad \rho_r = q r, \quad \rho \asymp 1 + r^k 
\]
for $r \geq R_2$. From the definition of $\rho, q, \eta$ \eqref{eq:nota2}, we yield 
\[
\bal
q(r)  &= q(R_1) \exp( \int_{R_1}^r  2 s \eta(s) ds ), \quad q(R_1) = 2 \mu^{-1}(R_1)  , \\
\rho(r)  & = \rho(R_1) + \int_{R_1}^r  s q(s) ds 
= \mu^{-1}(R_1) \B( 1 +   \int_{R_1}^r 2 s \exp(  \int_{R_1}^s 2 \tau \eta(\tau) d \tau  ) ds  \B) .
\eal
\]

Due to \eqref{eq:eta0_ineq2}, we yield 
\beq\label{eq:rho_asym1}
\bal
\rho(r) 
& \geq \mu^{-1}(R_1) \B(1 + \int_{R_1}^r 2 s \exp( \int_{R_1}^s  \f{k-2}{\tau}   d \tau  ) \B) \\
 & = \mu^{-1}(R_1) \B(1 + \int_{R_1}^r 2 s ( \f{s}{R_1})^{k-2 } d s \B) 
  = \mu^{-1}(R_1)( 1 + 2 \cdot \f{r^k - R_1^k}{ k R_1^{k-2}} ) .
 \eal
\eeq

Using $\eta = \f{k-2}{2 r^2} + \one_{r \leq R_2}$ \eqref{eq:eta0_ineq2}, 
 for $ r > 10 R_1 >  R_2 $, we yields 
\beq\label{eq:rho_asym3}
\bal
 \int_{R_1}^r 2 \tau \eta(\tau) d \tau 
 & \leq  \min(C R_1^2, r^2- R^2_1) + \int_{R_1}^r 2 \tau \cdot \f{k-2}{2 \tau^2} d \tau  \leq C R_1^2 + (k-2) \log( r / R_1) , \\
  & \exp(  \int_{R_1}^r 2  \tau \eta(\tau) d \tau  )
   \leq \exp( C R_1^2) (r / R_1)^{k-2},  \\
 q(r) &  \leq q(R_1) \exp( C R_1^2) (r / R_1)^{k-2},  \\
\rho & \leq q(R_1)( 1 + \int_{R_1}^r  s q(s) ds)
\leq C q(R_1) \exp(C R_1^2 ) (1 + \int_{R_1}^r \f{ s^{k-1}}{ R_1^{k-2}} ds)  \\
&\leq C q(R_1) \exp(C R_1^2 ) (1 +  \f{r^k - R_1^k}{k R_1^{k-2}} )
\leq C_{\e} q(R_1) \exp( C R_1^2) \exp( \e (r^2 - R_1^2)),
\eal
\eeq
where we have used $r > 10 R_1 > 100$ in the last inequality.

Note that $\mu(r) = \exp(-r^2), q(R_1) = 2 \mu(R_1)^{-1}$, therefore, we yield 
\[
\bal
\rho \mu & \leq C_{\e} \exp( C R_1^2) 2 \mu(R_1)^{-1}
\exp( \e(r^2 - R_1^2) )\cdot \mu(R_1) \exp( - ( r^2 - R_1^2))   \\
& \leq C_{\e} \exp( C R_1^2 ) \exp( -(1 -\e) (r^2 - R_1^2) ) .
\eal
\]

In particular, for $r \geq m_{\e} R_1$ with some constant $m_{\e} > 10$, we yield 
\beq\label{eq:rho_asym2}
\rho \mu \leq C_{\e} \exp( - (1 - 2 \e) (r^2 - R_1^2)).
\eeq


\subsubsection{Symmetrization of the integral in $\R^3$}

In the previous section, we have constructed $\rho$ and obtained that $J_{\rho}$ is non-negative up to a small loss \eqref{eq:E1_loss_loc1}. Below, we further modify $J_{\rho}, \td J_{\rho}$ to obtain \eqref{eq:coer_ansatz2} with a small loss. Denote 
\beq\label{eq:Jrho_comp0}
\bal
& F_i = \pa_i f + \f{\pa_i \rho}{\rho} f
= \pa_i f + \lam_i f  , \quad F_{\rho, i} =\pa_i( \rho f)
= \rho \pa_i f + \pa_i \rho \cdot f = \rho F_i 
,  \\
&  M(v, \td v) = \f{1}{2} \f{  (\mu \rho) (\td v) + (\mu \rho) (v) }{ \rho(v) \rho(\td v)},
\quad D(v, \td v) = \f{1}{2} \f{  (\mu \rho) (\td v) - (\mu \rho) (v) }{ \rho(v) \rho(\td v)} .
\eal
\eeq
We choose the above $F_{\rho, i} $ in \eqref{eq:coer_ansatz}. Since $f, \rho$ are radial, 
using \eqref{eq:lin_op_lam}, we get
\beq\label{eq:Jrho_comp02}
F_i = \f{v_i}{|v|} ( \pa_r f + \f{\pa_r \rho }{\rho} f ) = 
\f{v_i}{|v|}  F_0, \quad F_0 \teq \pa_r f + \f{\pa_r \rho }{\rho} f = \pa_r f + \lam f.
\eeq

From \eqref{eq:nota1}, we have 
\[
\bar g_{rr} = \f{v_i v_j}{|v|^2}  \pa_i \pa_j \bar g(v) 
 =  \f{v_i v_j}{|v|^2}  \pa_i \pa_j (-\D)^{-2} \mu(v) 
 = -\f{v_i v_j}{|v|^2}  \phi^{ij} \ast \mu,
  \quad \bar g_{rr} F_0^2 = - \phi^{ij} \ast \mu(v) \cdot  F_i(v) F_j(v) ,
\]
where we sum the repeated index over $i=1,2,3$. We expand the convolution in $-\bar g_{rr}$ in $\td J_{\rho}$ \eqref{eq:lin_op_Jloc2} and use \eqref{eq:int_rad} to rewrite it as follows 
\beq\label{eq:Jrho_comp1}
\bal
\td J_{\rho} 
& \teq \int_0^{\infty} (-\bar g_{rr} r^2) (\pa_r f + \lam f)^2 \rho d r 
\\
& = \f{1}{4\pi}  \int_{\R^3}  (\phi^{ij} \ast \mu ) F_i(v) F_j(v) \rho d v
 = \f{1}{4\pi} \int_{\R^3 \times \R^3}  \phi^{ij}(v - \td v) \f{ \mu( \td v )}{\rho(v)} (\rho F_i)(v) ( \rho F_j(v))  d \td v d v.
 \eal
\eeq
Below, we drop the domain $\R^3$ for $v, \td v$ for simplicity. To connect the above form and \eqref{eq:coer_ansatz2}, we write 
\[
 \f{ \mu( \td v )}{\rho(v)} = \f{ (\mu \rho)(\td v) }{\rho(v)  \rho(\td v)} 
  =  M(v, \td v) + D(v, \td v),
\]
where $M, D$ are defined in \eqref{eq:Jrho_comp0}. We want to compare $\td J_{\rho}$ with 
$\f{1}{4\pi}$ of \eqref{eq:coer_ansatz2}
\[
\bal
\td { \td {J_{\rho} }} 
&\teq  \f{1}{4\pi} \int  \phi^{ij}(v - \td v)  M(v, \td v) (\rho F_i)(v) ( \rho F_j(v))  d \td v d v \\
&= \f{1}{8 \pi} \int  \phi^{ij}(v - \td v)  M(v, \td v) ( F_{\rho, i}(v) F_{\rho, j}(v)
+  F_{\rho, i}(\td v) F_{\rho, j}(\td v) )  d \td v d v ,
\eal
\]
where we have symmetrized the integrand in the last equality, and $\phi^{ij}(v - \td v), M(v, \td v)$ are symmetric in $v, \td v$. The factor $4\pi$ comes from the relation between radial integral and integral in $\R^3$ \eqref{eq:int_rad}. From \eqref{eq:deri}, for a radially symmetric function $A$, we yield 
\beq\label{eq:Gauss_id2}
  \f{v_i}{|v|} \f{v_j}{|v|} \int \phi^{ij}(v - \td v) A(\td v )  d \td v
  =  -  \f{v_i}{|v|} \f{v_j}{|v|}   \pa_{ij}(-\D)^{-2} A 
  = - \pa_{rr}(-\D)^{-2} A .
\eeq
We adopt the notation above \eqref{eq:deri} and denote 
\[
g_A \teq (-\D)^{-2} A, \quad \bar g = (-\D)^{-2} \mu.
\]

Simplifying the integral over $\td v$ in $\td { \td {J_{\rho} }}$, and rewriting it as an 1D integral following \eqref{eq:Jrho_comp1}, we obtain
\[
\bal
& \td { \td {J_{\rho} }} 
= \int_0^{\infty} \f{1}{2} (-r^2) (  g_{\rho^{-1}, rr}  \mu(v) 
+ \bar g_{rr}  \rho^{-1}(v) ) (\pa_r f + \f{\pa_r \rho}{\rho} f  )^2 \rho^2  d r , \\
 \eal
\]

To compare $\td J_{\rho}$ and $\td {\td { J_{\rho}}}$, we only need to compare
\beq\label{eq:Jrho_comp2}
  N \teq - \bar g_{rr} \rho^{-1}, \quad  \td N = -  g_{ \rho^{-1}, rr} \mu. 
\eeq
pointwisely 
\beq\label{eq:gam_ineq0}
N(r) \geq \td N(r) (1 - \d(r)) > 0
\eeq
for some small $\d$. Using the formula of $g_{rr}$ \eqref{eq:BSlaw3}, we yield 
\[
\bal
N = \f{1}{3} \B( \f{1}{r^3} \int_0^r \mu s^4 + \int_r^{\infty} \mu s \B) \rho^{-1}, \quad
\td N =\f{1}{3} \B( \f{1}{r^3} \int_0^r \rho^{-1} s^4 + \int_r^{\infty} \rho^{-1} s \B) \mu .
\eal 
\]

We observe that for large $r$, $\mu$ decays much faster than $\rho^{-1}$ \eqref{eq:wg_ansatz}, which allows us to prove the pointwise bound. Next, we estimate $\g$ defined below 
\beq\label{eq:gam_ineq1}
\g \teq \f{ N - \td N }{ N }, 
\quad \g \geq  - \d_{\e}(r) , 
\quad \d_{\e}(r) \teq  
 C_{k} e^{-R_1^2} R_1^5 \one_{ r \leq m_{\e} R_1} 
+ C_{k, \e} e^{- (1-\e) r^2} \one_{ r > m_{\e} R_1},
\eeq
for any $\e \in (0, 1/4)$ with some $C_{\e}, m_{\e} > 1$. Later, we will fix $k=5/2$ in \eqref{eq:wg_para} and then $\d_{\e}$ is independent of $k$. 
Since $\rho_r =r q, \mu_r =  -2 r \mu$ \eqref{eq:nota2}, 
\[
(\rho \mu)^{\prime} = \rho^{\prime} \mu + \rho \mu^{\prime}
 = \mu ( r q  - 2 r \rho  ) = \mu r (q - 2 \rho ) , \quad 
 (q - 2\rho)^{\prime} = 2 \eta r q - 2 r q  \leq 0, 
\]
and $q = 2 \rho = 2 \mu^{-1}$ for $r \leq R_1$, we yield 
\beq\label{eq:pq_size}
q \leq 2 \rho , \quad  \rho \mu \leq 1 ,\quad \rho^{-1} \geq \mu ,
\eeq
and $\rho \mu$ is decreasing for all $r \geq 0$. It follows 
\[
 \mu(s) \rho^{-1}(r) \geq \rho^{-1}(s) \mu(r), \quad s \leq r.
\]
Using 
\[
N - \td N \geq \f{1}{3} \int_{\max(r, R_1)}^{\infty} (\mu - \rho^{-1})(s) ds \cdot \mu(r), \quad 
  N  \geq C  \min(r^{-3}, 1 ) \rho^{-1} ,
\]
and $\mu - \rho^{-1} \leq 0$, we yield 
\[
 \g = \f{ N - \td N}{N  } 
 \geq \f{1}{3N} \int_{\max(r, R_1)}^{\infty} (\mu - \rho^{-1})(s) ds \cdot \mu(r)
 \geq C \max(r^3, 1) \int_{ \max(r, R_1)}^{\infty}( \mu - \rho^{-1}) s  ds  \cdot  (\rho \mu)(r). 
\]

For any $m > 1, 2<k \leq 20$ and  $r \leq m R_1 $, using \eqref{eq:rho_asym1} and 
\[
\rho \geq \mu^{-1}(R_1) , \ r \in [R_1, 2 R_1],  \quad \rho \geq C \mu^{-1}(R_1) (1 + r^k) R_1^{2-k},  
\ r \geq 2R_1,  \quad \mu \rho \leq 1,
\]
 we yield 
\[
\int_{\max(r, R_1)}^{\infty} |\mu - \rho^{-1}| s ds 
  \leq \int_{\max(r, R_1)}^{\infty} \rho^{-1} s ds
 \leq C \mu(R_1) 
  \B( \int_{R_1}^{2 R_1} s ds+ R_1^{k-2} \int_{2 R_1}^{\infty} s^{1- k } ds \B) \les_k \mu(R_1) R_1^2,
  \quad 
\]

It follows 
\[
\g \geq - C_k \max(r^3, 1)  R_1^2 \mu(R_1).
\]

For any $\e \in (0, \f{1}{4})$, using \eqref{eq:rho_asym2} with $m = m_{\e}$, for $ r > m_{\e} R_1> R_1 > 10$, we yield 
\[
\bal
 C \max(r^3 , 1) \int_{ r}^{\infty} \rho^{-1} s ds (\rho \mu)
 &\leq C_{k,\e} r^3 R_1^2 \exp( - (1-2 \e)( r^2 - R_1^2)  ) \\
 & \leq C_{k,\e} \exp( - (1-3\e) r^2 ),
 \eal
\]
where $C_{k,\e}$ is some absolute constant depending on $k, \e$. It follows 
\[
\g \geq - C_{k,\e} \exp( - (1-\e)(r^2 - R_1^2) ) 
\]
for $r \geq  m_{\e} R_1$, where we have renamed the variable $\e$. Combining the above two estimates, we establish \eqref{eq:gam_ineq1}. 
Using \eqref{eq:gam_ineq1}, \eqref{eq:gam_ineq0}, we yield the pointwise estimate
\[
N(r) - \td N(r) \geq - \d_{\e}(r) N(r),
\quad 
N(r) \geq \f{1}{2} (N(r) + \td N(r)) (1  - \d_{\e}(r)). 
\] 
Using derivations similar to \eqref{eq:Jrho_comp1} and the definitions
\eqref{eq:Jrho_comp0} of $M$ and \eqref{eq:Jrho_comp2},  we get 
\[
\bal
 {\td{J_{\rho} }}
&\geq \f{1}{2} \int r^2 ( -  \bar g_{rr} \rho^{-1} 
- g_{\rho^{-1}, rr} \mu   )(1 - \d_{\e}(r))  (\pa_r f + \lam f)^2 \rho^2  d r  \\
& = \f{1}{4\pi}  \int \phi^{ij}(v - \td v) M(v, \td v) (1 - \d_{\e}(|v|)) 
 \rho F_i(v)  \rho F_j(v) d v d \td v .
 \eal
\]
The factor $\f{1}{4\pi}$ comes of the relation  \eqref{eq:int_rad}.
Since $\{ \phi^{ij}\}_{ij}$ is positive semi-definite, we yield 
\[
\phi^{ij}(v - \td v)  \rho F_i(v)  \rho F_j(v) \geq 0,
\]
which along with $1 - \d_{\e}(|v|) \geq 1 - \d_{\e}(|v|) - \d_{\e}( |\td v|), M(v, \td v) \geq 0$  implies 
\beq\label{eq:Jrho_comp3}
\bal
& \td J_{\rho}
\geq J_{\e, \rho} ,  \quad  
 M_{\e} \teq M(v, \td v)(1 - \d_{\e}(|v|) - \d_{\e}( |\td v|)) ,\\
 & J_{\e, \rho } \teq  \f{1}{4\pi} \int  \phi^{ij}(v - \td v)  M_{\e}(v, \td v)  F_{\rho, i}(v) F_{\rho, j}(v)
d \td v d v .    \\
\eal
\eeq
Since $M_{\e}(v, \td v), \phi^{ij}(v - \td v)$ are symmetric in $v, \td v$, symmetrizing the integrand in $J_{\e, \rho}$, and combining \eqref{eq:Jrho_comp3}, \eqref{eq:E1_loss_loc1}, we establish
\beq\label{eq:E1_loss_loc2}
\bal
 & - \int_{\R^3} \cL_{loc} f \cdot f \rho d v 
 \geq 4 \pi \td J_{ \rho}  - C \int_{r \geq R_1} f^2 r^2 d r   \geq 4 \pi J_{\e, \rho}  - C \int_{r \geq R_1} f^2 r^2 d r   \\
 & \geq \f{1}{2}\int  \phi^{ij}(v - \td v)  M_{\e}(v, \td v)  ( F_{\rho, i}(v) F_{\rho, j}(v)
+  F_{\rho, i}(\td v) F_{\rho, j}( \td v) ) 
d \td v d v  - C \int_{r \geq R_1} f^2 r^2 d r  .
\eal
\eeq
We establish the relation between \eqref{eq:coer_ansatz2} and \eqref{eq:lin_op3} up to a small loss. We will further estimate the error term in Section \ref{sec:E1_lower} using the energy.

\subsection{Estimate of the nonlocal parts}

In this section, we estimate $\cL_{nloc}$ and show that it is close to \eqref{eq:coer_ansatz3} with a small error.

Recall the definitions of $\lam, \lam_i$ from \eqref{eq:lin_op_lam} and $F_{\rho,i}$ from 
\eqref{eq:Jrho_comp0}.  Using the definition \eqref{eq:lin_op2} and integration by parts, we yield 
\beq\label{eq:nloc_form1}
\bal
  I_{nloc}  & \teq  \int - \cL_{nloc} f \cdot f \rho d v = \int_{\R^3} f \rho \cdot \pa_i ( \mu \phi^{ij} \ast (\pa_j f + 2 v_j f ) ) d v   \\
 & = -\int_{\R^3} \mu(v)  \phi^{ij}(v - \td v) \pa_i ( f \rho )(v) 
 \cdot  ( \pa_j f(\td v) + 2 \td v_j f(\td v) ) d \td v d v \\
 \eal
\eeq
Recall $ \pa_i( f \rho) = F_{\rho,i}= \rho F_i$ \eqref{eq:Jrho_comp0}. 
We further rewrite $I_{nloc}$ as follows 
\[
\bal
I_{nloc} & =
- \int_{\R^3} \mu(v) 
 \phi^{ij}(v- \td v) F_{\rho, i}(v) ( \pa_j f +  \lam_j f)(\td v) d \td v d v 
 - \int_{\R^3} \mu(v) 
 \phi^{ij}(v- \td v)  F_{\rho, i}(v) ( 2 \td v_j -  \lam_j ) f(\td v) d \td v d v \\
 & \teq I_{nloc, 1} + I_{nloc, 2} .
 \eal
\]


Our goal is to compare $I_{nloc}$ with \eqref{eq:coer_ansatz3} and $M_{\rho} = M_{\e}$ in \eqref{eq:Jrho_comp0},\eqref{eq:E1_loss_loc2}
\beq\label{eq:nloc_form2}
 I_{nloc, \e} \teq  
- \f{1}{2}\int \int M_{\e}(v, \td v)( F_{\rho, i}(v) F_{\rho,j}(\td v) +F_{\rho, i}( \td v) F_{\rho,j}( v)  ) \phi^{ij}(v - \td v) ,
\eeq
and show that the difference $I_{nloc} - I_{nloc, \e}$ is very small.


For $I_{nloc, 1}$, symmetrizing the integrand, using $ \pa_j f +  \lam_j f 
= \rho^{-1}  F_{\rho, j} $ \eqref{eq:Jrho_comp0}, $\phi^{ij} = \phi^{ji}$ and
\[
\phi^{ij}(v - \td v) F_{\rho,i}(v) F_{\rho, j}(\td v)
 = \phi^{ij}(v - \td v) F_{\rho,j}(v) F_{\rho, i}(\td v) ,
\]
we yield 
\[
\bal
I_{nloc, 1}  & =  - \int  \mu(v) \rho^{-1}(\td v) \phi^{ij}(v - \td v)
F_{\rho, i}(v) F_{\rho, j}(\td v) d v d \td v \\
& = - \f{1}{4} \int_{\R^3} ( \mu(v) \rho^{-1}(\td v) + \mu(\td v) \rho^{-1}(v) ) 
\phi^{ij}(v - \td v)
( F_{\rho, i}(v) F_{\rho, j}(\td v) + F_{\rho, i}(\td v) F_{\rho, j}( v)  )  d v d \td v, 
\eal
\]
which has the same form as \eqref{eq:coer_ansatz3} with a kernel $ M(v, \td v )$ \eqref{eq:Jrho_comp0} slightly different from \eqref{eq:Jrho_comp3}.

Therefore, the difference $I_{nloc} - I_{nloc, \e}$ \eqref{eq:nloc_form1},\eqref{eq:nloc_form2} consists of two error terms 
\beq\label{eq:nloc_T1}
\bal
 I_1  & = \int_{\R^3}  \B(  \phi^{ij} \ast [ (2 v_j - \lam_j ) f] \B)  \cdot  F_{ i}(v)
 \cdot (\rho \mu)(v) d v  , \\
I_2 & = - \f{1}{4} \int  ( \mu(v) \rho^{-1}(\td v) + \mu(\td v) \rho^{-1}(v) ) 
(\d_{\e}(|v|) + \d_{\e}( |\td v| ) \phi^{ij}(v - \td v)
( F_{\rho, i}(v) F_{\rho, j}(\td v) + F_{\rho, i}(\td v) F_{\rho, j}( v)  )  d v d \td v \\
& = - \int (\d_{\e}(|v|) + \d_{\e}( |\td v| ) ) \mu(v) \rho^{-1}(\td v) 
\phi^{ij}(v - \td v)  F_{\rho, i}(v) F_{\rho, j}(\td v ) d v d \td v \\
& = - \int  (\d_{\e}(|v|) + \d_{\e}( |\td v|))
( \mu \rho)(v) \phi^{ij}(v - \td v) F_i(v) F_j(\td v) d v d \td v ,
 \eal
\eeq
where we have used $F_{\rho,i} = \rho F_i$ in the last identity.

Below, we estimate these error terms. We need the following decay estimates from the kernel.
\begin{lem}\label{lem:conv1}
Denote $ \la x \ra = ( 1 + |x|^2)^{1/2}$. Suppose that $f : \R^3 \to \R, F : \R \to \R_+$ , $|f(v)| \leq F(|v|)$ for any $v$, 
and $\int_{|v| = r} f(v) \phi^{ij}(v ) d \s = 0 $ for any $r >0$.

(a) If in addition, $f(v) = 0, |v| \leq R$ for some $R \geq 1$, 
then for $ l > 1$, we get
\[
| \phi^{ij} \ast f| 
\les ( |l-1|^{-1/2} +  | l-5 |^{-1/2} )  ||  \la r \ra^{ l /2} F ||_2  \max( |v|, R)^{ - m_l } |v|,
\quad m_l = \min(2, \f{l -1}{2} ).
\]

(b) If $f$ has full support, i.e. $R = 0$, 
for $l > 3$, we get 
\[
| \phi^{ij} \ast f| \les 
 |l-3|^{-1/2 }  || F r  \la r \ra^{l /2} ||_2  |v|^{1/2} \la v \ra^{ -3/2}.
\]

\end{lem}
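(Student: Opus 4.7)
The plan is to exploit the spherical cancellation $\int_{|w|=r}\phi^{ij}(w) f\, d\sigma = 0$ together with the evenness $\phi^{ij}(-w) = \phi^{ij}(w)$---so that $\int_{|w|=r}\phi^{ij}(-w)f\,d\sigma = 0$ as well---and the pointwise bounds $|\phi^{ij}(y)| \lesssim |y|^{-1}$ and $|\nabla\phi^{ij}(y)| \lesssim |y|^{-2}$. I decompose the $w$-integral into a far zone $F_v = \{|w| \geq \max(2|v|,R)\}$, a middle zone $M_v = \{|v|/2 \leq |w| \leq 2|v|,\ |w|\geq R\}$, and a near zone $N_v = \{R \leq |w| \leq |v|/2\}$, and treat each by an appropriate combination of cancellation and Cauchy--Schwarz.

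In the far zone, the cancellation lets me subtract $\phi^{ij}(-w)$ from the kernel, and the mean value theorem together with $|tv-w| \gtrsim |w|$ for $t \in [0,1]$ gives $|\phi^{ij}(v-w) - \phi^{ij}(-w)| \lesssim |v|/|w|^2$. A weighted Cauchy--Schwarz on the radial integral $\int_{\max(2|v|,R)}^\infty F(s)\,ds$ against $\langle s\rangle^{-l}$ produces the factor $|l-1|^{-1/2}\max(|v|,R)^{-(l-1)/2}|v|$, which accounts for the first term of the stated sum. In the near zone (nonempty only if $|v| > 2R$ in part (a), or for all $|v|>0$ in part (b)), $|v-w| \gtrsim |v|$ makes the kernel bounded by $|v|^{-1}$; estimating $|v|^{-1}\int_R^{|v|/2} F(s)\,s^2\,ds$ by Cauchy--Schwarz with weight $\langle s\rangle^{-l}$, in part (a) the tail $\int_R^{|v|/2} s^4\langle s\rangle^{-l}\,ds$ becomes singular as $l\to 5$ and contributes $|l-5|^{-1/2}$ together with either $R^{-(l-5)/2}$ (for $l > 5$) or $|v|^{(5-l)/2}$ (for $l < 5$); in both subcases these combine to $\max(|v|,R)^{-m_l}|v|$ with $m_l = \min(2,(l-1)/2)$. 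In part (b) the lower limit is $0$; the enhanced weight $\|Fr\langle r\rangle^{l/2}\|_2$ absorbs the singularity at the origin, and Cauchy--Schwarz of $\int_0^{|v|/2} s^2\langle s\rangle^{-l}\,ds$ yields the factor $|l-3|^{-1/2}$ with overall decay $|v|^{1/2}\langle v\rangle^{-3/2}$.

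The middle zone is handled by separating the singular factor via Cauchy--Schwarz,
\[
\int_{M_v} |v-w|^{-1}F(|w|)\,dw \lesssim \Bigl(\int_{|y|\leq 3|v|}|y|^{-2}\,dy\Bigr)^{1/2}\Bigl(\int_{M_v}F(|w|)^2 \,dw\Bigr)^{1/2},
\]
using $\int_{|y|\leq 3|v|}|y|^{-2}\,dy \lesssim |v|$ and extracting the pointwise polynomial weight of $F$ on the annulus $|w|\asymp|v|$; the resulting bound is always dominated by the near/far estimates. The main obstacle is purely bookkeeping: verifying, across the cases $|v| \leq R$ versus $|v| > R$ in part (a), and the subranges $l<5$, $l>5$ in the near-zone analysis, that the three zonal contributions combine to exactly the advertised sums $(|l-1|^{-1/2} + |l-5|^{-1/2})\|\langle r\rangle^{l/2}F\|_2 \max(|v|,R)^{-m_l}|v|$ and $|l-3|^{-1/2}\|Fr\langle r\rangle^{l/2}\|_2 |v|^{1/2}\langle v\rangle^{-3/2}$, with no spurious blow-up at the borderline values of $l$. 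Once the zonal decomposition and the far-zone cancellation are in place, each estimate reduces to an explicit one-dimensional Cauchy--Schwarz calculation.
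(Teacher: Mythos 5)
Your proposal is correct and follows essentially the same route as the paper's proof: the same three-zone decomposition (far zone $|w|\ge\max(2|v|,R)$ where the spherical cancellation and the gradient bound $|\nabla\phi^{ij}(y)|\lesssim|y|^{-2}$ give the $|v|/|w|^2$ gain, a near zone where $|\phi^{ij}(v-w)|\lesssim|v|^{-1}$, and a middle annulus handled by Cauchy--Schwarz against $\int_{|y|\le 3|v|}|y|^{-2}dy\lesssim|v|$), with the same one-dimensional weighted Cauchy--Schwarz computations producing the factors $|l-1|^{-1/2}$, $|l-5|^{-1/2}$, $|l-3|^{-1/2}$ and the decay rates $\max(|v|,R)^{-m_l}|v|$ and $|v|^{1/2}\langle v\rangle^{-3/2}$. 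No gaps; the remaining work you flag is indeed only bookkeeping.
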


\begin{proof}

Denote $v_R \teq \max(2 |v|, R)$. Using $\int_{|v| = r} f(v) \phi^{ij}(v ) d \s = 0$, we get 
\[
 \phi^{ij} \ast f(v) = \int_{\R^3} \phi^{ij}(v - \td v) f( \td v ) 
  = \int_{|\td v| \geq v_R} ( \phi^{ij}(v - \td v)  - \phi^{ij}(\td v) ) f(\td v)
  + \int_{ R \leq |\td v| \leq  2|v| } \phi^{ij}(v - \td v)  f(\td v) \teq T_1 + T_2.
  \]
  For $T_1$, we yield 
  \[
  \bal
|T_1| & \les \int_{|\td v| \geq v_R} \f{|v|}{|\td v|^2}  |f(\td v)| d \td v
\leq |v| \int_{r \geq v_R} | F(r) | dr 
\les |v| \cdot || F \la r \ra^{l /2}||_2  ( \int_{r \geq v_R} r^{- l }  )^{1/2} \\
& \les (l -1)^{-1/2} |v|  v_R^{ (1-l )/2} || F \la r \ra^{l /2}||_2   .
\eal
  \]
  Since $R \geq 1$, we have $v_R^{ (1-l )/2} \les v_R^{-m_l}$.

For $T_2$, we further decompose the domain of the integral into $Q_1 = \{ |\td v| \leq |v|/2 \}, 
Q_2 = \{ |v|/2 \leq |\td v|\leq 2 |v| \} $. If $|v| \leq R/2$, we get $I_2 = 0$.

Next, we consider $|v| \geq R/2$. We have $|v|\asymp v_R \geq 1$.  For $\td v \in Q_1$, we get $|\phi^{ij}(v-\td v)| \les |v|^{-1}$and 
\[
\bal
T_{21} \teq & \int_{ Q_1} |f(\td v)\phi^{ij}(v- \td v) | d \td v 
\les |v|^{-1} \int_{Q_1} |f(\td v)| d \td v
\les  |v|^{-1} \int_0^{|v|/2} F(r) r^2 d r  \\
 \les &
|v|^{-1} || F \la r\ra^{l /2} ||_2 (\int_0^{|v|/2} r^4 \la r \ra^{-l})^{1/2} 
\les |l-5|^{-1/2} |v|^{-1} \max( |v|^{(5-l)/2}, 1) || F \la r\ra^{l/2} ||_2 \\
\les &  |l-5|^{-1/2} |v| \max( |v|^{(1-l)/2}, |v|^{-2}) || F \la r\ra^{l/2} ||_2 
\les |l-5|^{-1/2} |v|  v_R^{-m_l} || F \la r\ra^{l/2} ||_2 
\eal
 \]
 Note that the upper bound is trivial for $l =5$ and can grow in $|v|$ for $l < 3$. 



For $\td v \in Q_2$, we yield $|v-\td v| \leq 3 |v|$
\[
\bal
\int_{ Q_1} |f(\td v)\phi^{ij}(v- \td v) | d \td v 
& \les (\int_{|v-\td v| \leq 3 | v | } |v-\td v|^{-2} d \td v  \cdot 
\int_{ |v|/2 \leq |\td v| \leq 2 |v|} f^2 d \td v )^{1/2}
\les |v|^{1/2} ( \int_{ |v|/2}^{ 2 |v|} F^2 r^2 d r)^{1/2} \\
& \les |v|^{1/2} || F \la r \ra^{l /2}||_2 |v|^{ (2- l )/2} 
\les |v| \max(|v|, R)^{-( l -1)/2} || F \la r \ra^{l /2}||_2 .
\eal
\]
Summarizing the above estimate, we prove the first estimate.

If $R = 0$, for fixed $v$, we decompose the integral into 
\[
Q_1 = \{ |\td v| \leq |v|/2\}
, \quad Q_2 =  \{ |v|/2 \leq |\td v| \leq 2 |v| \} , 
\quad  Q_3 = \{  |\td v| \geq 2 |v|   \}. 
\]

Firstly, for any $ r > 0$ and $l > -1$, we note that 
\[
\int_r^{\infty} |F(s)| ds 
\leq || F r  \la r \ra^{l/2} ||_2 
( \int_r^{\infty} s^{-2} \la s \ra^{-l} ds)^{1/2}
\les (l +1)^{-1/2}  || F r \la r \ra^{l /2} ||_2 r^{-1/2} \la r \ra^{- l /2} .
\]
If $r < 1$, the integral is bounded by $C r^{-1}$; if $r > 1$, the integral is bounded by $
( l +1)^{-1} r^{-l -1}$, which implies the above estimate.


The integral in $Q_3$ can be bounded similarly 
\[
\bal
| \int_{Q_3} (\phi^{ij}(v - \td v) - \phi^{ij}(\td v) ) f(\td v ) d \td v|
& \les |v| \int_{ |\td v| \geq 2 |v|} |\td v|^{-2} | f(\td v) | d \td v 
\les |v| \int_{ 2 |v|}^{\infty} F(r) dr  \\
& \les (l +1)^{-1/2} |v|^{1/2} \la |v| \ra^{-l /2}  || F r \la r \ra^{l /2} ||_2 .
\eal
\]

In $Q_1$, we get $|\phi^{ij}(v- \td v)| \les |v|^{-1}$. Since $l > 3$, it follows 
\[
\bal
& | \int_{Q_1} \phi^{ij}(v - \td v)   f(\td v ) d \td v|
 \les |v|^{-1} \int_{|\td v| \leq |v| / 2} |f(\td v)| d \td v
 = |v|^{-1} \int_{0}^{|v|/2}  |F(r)| r^2 dr \\
  \les & | v|^{-1} || r  \la r \ra^{l /2} F ||_2  ( \int_0^{|v|} r^2 \la r \ra^{-l } d r )^{1/2}  \les | v|^{-1}  || r  \la r \ra^{l /2} F ||_2 \B( \one_{|v|\leq 1} |v|^{3/2}
 + \one_{|v| > 1}  |l-3|^{-1/2}  \B)   \\
  \les & (l-3)^{-1/2} |v|^{1/2} \la |v|\ra^{-3/2}  || r  \la r \ra^{l/2} F ||_2 .
\eal
\]

In $Q_2$, we get $ | \phi^{ij}(v - \td v ) |  \les |v- \td v|^{-1}, |v| \asymp |\td v |$. We get 
\[
\bal
| \int_{Q_2} \phi^{ij}(v - \td v)   f(\td v ) d \td v|
& \les  \B( \int_{Q_2} |v - \td v|^{-2} d \td v \cdot
\int_{Q_2} |f(\td v)|^2 d \td v 
\B)^{1/2}
\les  |v|^{1/2} ( \int_{|v|/2}^{2|v|} F(r)^2 r^2 d r)^{1/2} \\
& \les |v|^{1/2} ||  r \la r \ra^{l/2} F ||_2   \la |v| \ra^{-l/2}
\les |v|^{1/2} \la |v| \ra^{-l/2}  || F r \la r \ra^{l/2} ||_2 .
\eal 
\]

Combining the above estimates, we prove the second inequality.
\end{proof}

\subsubsection{Estimate of $I_1$}

Recall $I_1$ \eqref{eq:nloc_T1}. Denote $q_j(v) =(2 v_j - \lam_j ) f(|v|)  $. 
From \eqref{eq:nota2}, \eqref{eq:lin_op_lam},
\beq\label{eq:E1_loss_nloc0}
\rho = e^{r^2}, \quad \lam_j = 2 v_j \rho, \  r \leq R_1, \quad \lam_j =\f{v_j}{|v|} \lam ,
\quad  0\leq \lam(r) ,|\lam_j| \leq 2 r,
 \quad \rho \mu \leq 1, \ r \geq 0,
\eeq
and \eqref{eq:Jrho_comp0}, \eqref{eq:Jrho_comp02} for $F_i, F_0$, we get 
\beq\label{eq:E1_loss_nloc01}
q_j(v) = 0, \quad |v| \leq R_1, \quad |q_j(v)| \leq 2  |v| \cdot  |f|,
\quad |F_j| \leq |F_0| \les |\pa_r f | + |r f| , \ \forall  \ v.
\eeq

Next, we control $I_1$ by $ || r ( |f| + |\pa_r f | )\la r \ra^{l/2}||_2 
 , l \in (7, 20)$.  Due to the odd symmetry, we get
\[
\int_{|v| = r } \phi^{ij}(v) \f{v_j}{|v|} d \s(v) = 0. 
\]

Applying Lemma \ref{lem:conv1} with $ l \in (7, 20)$, we obtain $ m_l = \min(2, \f{l-1}{2} ) = 2$,
\[
\bal
 |\phi^{ij}(v) \ast q_j |
 \les    || q_j \la r \ra^{l/2} ||_2  \max( |v|, R_1)^{ -m_l} |v|  \les     ||  r f \la r \ra^{l/2 } ||_2  \max( |v|, R_1)^{ - 2 } |v| ,
\eal
\]

It follows 
\[
\bal
 & | I_1  |  \les  || r f \la r \ra^{l/2 } ||_2  \int |F_0| |v| \max( |v|, R_1)^{ - 2} \rho \mu(v) d v 
\les || r f \la r \ra^{l/2} ||_2  \int ( |\pa_r f | + | r f  | )  r^3 \max(r, R_1)^{-2} d r ,
\eal
\]
where we have used $\rho \mu \leq 1$, $|F_0| \les |\pa_r f| + |r f|$ \eqref{eq:E1_loss_nloc01}, and rewritten the integral in $1D$ in the second inequality. 
Applying Cauchy-Schwarz inequality, we obtain
\beq\label{eq:E1_loss_nloc1}
\bal 
 I_1 & \les  || r (|f| + |\pa_r f|) \la r \ra^{\f{l}{2}} ||_2^2 
 \cdot || (r^2 + r^3) \la r \ra^{ -{\f{l}{2}}  } \max(r, R_1)^{-2} ||_2 \\
 &   \les  |l-7|^{-{\f{l}{2}} } R_1^{-2} || r (|f| + |\pa_r f|) \la r \ra^{\f{l}{2}} ||_2^2 , \quad l \in (7, 20),
 \eal
\eeq
where we have used the following estimate for the integral with $l > 7 $, 
\[
\int_0^{R_1} \la r \ra^{6 - l} R_1^{-4}  d r 
+ \int_{R_1}^{\infty} \la r \ra^{6 - l -4} d r
\les R_1^{ 3-l} + |l-7|^{-1} \max( R_1^{7-l}, 1) R_1^{-4} 
\les |l-7|^{-1} R_1^{-4}.
\]



\subsubsection{Estimate of $I_2$}

Firstly, we rewrite $I_2$ \eqref{eq:nloc_T1} as follows 
\[
I_2 = - \int \mu \rho \d_{\e}  F_i (\phi^{ij} \ast F_j)  d v
- \int \mu \rho  F_i   ( \phi^{ij} \ast ( F_j  \d_{\e} ) ) d v  
= - \int (\phi^{ij} \ast  ( \mu \rho \d_{\e}  F_i ) )   F_j   d v
- \int \mu \rho  F_i   ( \phi^{ij} \ast ( F_j  \d_{\e} ) ) d v  
\]

Recall from \eqref{eq:Jrho_comp02}  that
\[
F_i = \f{v_i}{|v|} (\pa_r f + \lam f) =\f{v_i}{|v|} (\pa_r f + \f{\pa_r \rho}{\rho} f ) =
\f{v_i}{|v|} F_0 .
\]

Using the second estimate in Lemma \ref{lem:conv1} with $ l = 5$, and $\mu \rho \leq 1$, 
$|F_i| \leq F_0$ \eqref{eq:Jrho_comp0},\eqref{eq:Jrho_comp02}, we obtain 
\beq\label{eq:E1_loss_nloc2}
\bal
|I_2|
& \les  || F_0 r \d_{\e} \la r \ra^{ l /2} ||_2
 \int_{\R^3}  | F_0|  |v|^{1/2} \la |v| \ra^{-3/2} d v  \\
 \eal
\eeq

Using the relation between radial integral and integrals in $\R^3$, we get
\[
 \int_{\R^3}  | F_0|  |v|^{1/2} \la |v| \ra^{-3/2} d v 
 \les \int |F_0(r)| \la r \ra^{-3/2} r^{5/2} d r
 \les  || F_0 r \la r \ra^{2/3}||_2 || \la r \ra^{-2/3}||_2 
 \les || F_0 r \la r \ra^{2/3}||_2.
\]

Using the smallness of $\d_{\e}$, the rapid decays of $\d_{\e}$ \eqref{eq:gam_ineq1}, and \eqref{eq:E1_loss_nloc01}
\[
\d_{\e}  \les e^{-R_1^2/2} \la r \ra^{-100}, \quad |F_0| \les |\pa_r f | + 2 r |f|,
\]
and estimates similar to \eqref{eq:E1_loss_nloc1}, we establish
\beq\label{eq:E1_loss_nloc3}
|I_2| \les e^{-  \f{ R_1^2 }{2} } || F_0 r \la r \ra^{2/3} ||_2 || F_0 r \la r \ra^{-10} ||_2 
\les e^{- \f{ R_1^2 }{2} } || r ( |\pa_r f | + | f|) \la r \ra^{l/2} ||_2,  \  l = 5.
\eeq

Combining  \eqref{eq:nloc_T1},\eqref{eq:E1_loss_nloc1},\eqref{eq:E1_loss_nloc3}, we obtain the estimate of $I_{nloc} - I_{nloc, \e}$ \eqref{eq:nloc_form1}, \eqref{eq:nloc_form2}
\[
  | I_{nloc} - I_{nloc, \e}| \leq |I_1| + |I_2| \les  C R_1^{-2} |l-7|^{-\f{1}{2}} \cdot || r ( |\pa_r f | + | f|) \la r \ra^{l/2} ||_2 ,
\]
which along with \eqref{eq:E1_loss_loc2}, and $\int_{ R_1}^{\infty} f^2 r^2 d r \les || r ( |\pa_r f | + | f|) \la r \ra^{l/2} ||_2^2$ implies
\beq\label{eq:lin_coer}
\bal
 & - \la (\cL_{loc} + \cL_{nloc} ) f,  f \rho \ra
  \geq Q_{\rho} 
 -  C R_1^{-2} |l-7|^{-\f{1}{2}} \cdot || r ( |\pa_r f | + | f|) \la r \ra^{l/2} ||_2, \   l \in (7, 20), \\
 & Q_{\rho} \teq \f{1}{2}\int  \phi^{ij}(v - \td v)  M_{\e}(v, \td v)  ( F_{\rho, i}(v) - F_{\rho, i}( \td v) )
( F_{\rho, j}(v) - F_{\rho, j}(\td v)) d \td v d v  \\ 
\eal
\eeq
and justify  \eqref{eq:coer_ansatz}. We will further bound these error terms using the energy in Section \ref{sec:E1_lower}.

\subsection{Localized coercive estimates}

We want to obtain a coercive estimate of $ - \la \cL_1 f , f \rho \ra$ using the relation \eqref{eq:coer_ansatz} and following \cite{guo2002landau}. Since the proof in \cite{guo2002landau} is based on a compactness argument, if we apply it directly to \eqref{eq:coer_ansatz}, the constant in the coercive bound depends on $R_1$ via the weight $\rho$. 
Below, we develop localized coercive estimates with a spectral gap independent of $R_1$.
We note that the explicit constant in the $L^2(\mu^{-1})$ coercive estimates of $\cL_1$ in the whole space has been obtained in \cite{mouhot2006explicit}. It is not clearly if such an argument can be localized with a constant independent of $R_1$.

From the definition of \eqref{eq:gam_ineq1}, by choosing $\e$ first and then $R_1$ large enough, we can obtain 
\[
|| \d_{\e}||_{\infty} < \f{1}{4}, \quad  1 - \d_{\e}(v) - \d_{\e}(\td v) \geq \f{1}{2}.
\]

Recall the definitions of $\lam, \lam_i$ \eqref{eq:lin_op_lam}, $M(v, \td v), M_{\e}(v, \td v)$ from \eqref{eq:Jrho_comp0}, \eqref{eq:Jrho_comp3}, \eqref{eq:E1_loss_loc2}.
For $|v|, |\td v| \leq R_1$, we get
\[
\bal
 & \rho(v) = \mu^{-1}(v) = e^{|v|^2}, \quad |v| \leq R_1,  
 \quad \lam = \f{\pa_r \rho}{\rho} = 2 |v|,  \quad \lam_i = 2 v_i,  \\
 & M_{\e}(v, \td v) \geq \f{1}{2} M(v, \td v) =\f{1}{2} (\rho(v)\rho(\td v))^{-1}
   = \f{1}{2} \mu(v) \mu(\td v).
   \eal
\]

Using the above estimates and localizing the integral in \eqref{eq:coer_ansatz} to $|v|, |\td v| \leq R_1$, we yield 
\[
\bal
  Q_{\rho} & \geq \f{1}{2} Q(f, R_1), \quad F_{\mu^{-1}, i} = \pa_i ( f \mu^{-1} ) ,\\
  Q(f, r) & \teq \f{1}{2} \int_{|v|, |\td v| \leq R_1} \mu(v) \mu(\td v) 
\phi^{ij}(v - \td v)  ( F_{\mu^{-1}, i}(v) - F_{\mu^{-1}, i}(\td v)  )
( F_{\mu^{-1}, j}(v) - F_{\mu^{-1}, j}(\td v)  ) d v d \td v,
\eal
\]
where we have used the notation \eqref{eq:Jrho_comp0} $F_{\rho, i}$  with $\rho(v) = \mu^{-1}(v), |v| \leq R_1$.

The integrand in $Q(f, r)$ is the same as that in Lemma 4 in \cite{guo2002landau}. It is clear that $Q(f, r) = 0$ if and only if 
$(f \mu^{-1})(v) =  a + b \cdot v + c|v|^2, |v| \leq r$ for some constants $a,b,c$.

We generalize the coercive estimate of $\cL_1$ in Lemma 5 in \cite{guo2002landau} to the following uniformly localized estimate. Lemma 5 in \cite{guo2002landau} implies the case of $n = \infty$ below.

\begin{lem}\label{lem:coer_unif}

For any $n  \in Z_+, n \geq 1$,  
there exists $\d_n > 0$ such that for $f$ radially symmetric, we have
\beq\label{eq:coer_unif}
Q(f, n) \geq \d_n | \{ I - \Pi_0(n) \} ( f \mu^{-1/2}) |_{\s, n}^2, 
\eeq
where $\Pi_0(r)$ is the $L^2$ projection 
\[
\Pi_0(r) p \teq  \sum_{1\leq j\leq 5} e_j \cdot  \int_{|v| \leq r} p e_j d v ,
\]
$\{ e_j \}_{j \leq 5}$ is the orthonormal basis for $\{ 1, v, |v|^2 \} \mu^{1/2} \one_{|v|\leq r}$, and $|p|_{\s, R}$ is the localized norm 
\[
|p|_{\s, R} \teq \int_{|v| \leq R} \s_R^{ij} ( \pa_i p \pa_j p  + v_i v_j p^2 ) d v,
\quad \s_R^{ij}(v) \teq  \int_{ | \td v| \leq R } \phi^{ij}(v - \td v) \mu(\td v) d \td v 
\]
Moreover, $\d_n$ is uniformly bounded from below. For some $\d_*>0$, we have 
\beq\label{eq:unif_gap}
 \liminf_{n \to \infty} \d_n \geq \min(1, \d_{\infty}), \quad  \d_n \geq \d_* > 0.
\eeq

\end{lem}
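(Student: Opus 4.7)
The plan is to adapt the compactness/contradiction argument behind Lemma 5 of \cite{guo2002landau} (which corresponds to the case $n=\infty$ below) to the localized ball $\{|v|\leq n\}$, and then add a separate diagonal argument to control the constant $\delta_n$ as $n\to\infty$. The first argument gives $\delta_n>0$ for each fixed $n$, and the second gives $\liminf_{n\to\infty}\delta_n \geq \min(1,\delta_\infty)$, which combined yield the uniform lower bound $\delta_n\geq \delta_\ast>0$.

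For the existence part, fix $n$ and suppose \eqref{eq:coer_unif} fails: choose radial $f_k$ with $g_k \teq \{I-\Pi_0(n)\}(f_k\mu^{-1/2})$ satisfying $|g_k|_{\s,n}^2=1$ but $Q(f_k,n)\to 0$. On $\{|v|\leq n\}$ the matrix $\s_n^{ij}(v)$ is uniformly elliptic (eigenvalues controlled above and below by constants depending only on $n$ through the smooth positive weight $\phi^{ij}\ast(\mu\one_{|v|\leq n})$), so the normalization forces $\{g_k\}$ to be bounded in $H^1(\{|v|\leq n\})$. By Rellich we extract a subsequence with $g_k\to g_\infty$ strongly in $L^2$ and weakly in $H^1$. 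Lower semicontinuity of $Q(\cdot,n)$ (a non-negative quadratic form) yields $Q(f_\infty,n)=0$, and the characterization of the kernel of $Q(\cdot,n)$ shows $(f_\infty\mu^{-1})(v)=a+b\cdot v+c|v|^2$ on $\{|v|\leq n\}$. Radial symmetry of $f_k$ (and hence of $g_\infty$) rules out the $v_i$ term, while applying $I-\Pi_0(n)$ then annihilates the remaining affine part, so $g_\infty\equiv 0$. This contradicts the strong $L^2$ convergence together with the ellipticity-based equivalence of $|\cdot|_{\s,n}$ and the $H^1$ norm on $\{|v|\leq n\}$.

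For the uniform lower bound, suppose there are $\delta_0<\min(1,\delta_\infty)$ and $n_k\to\infty$ with radial $f_k$ satisfying $|g_k|_{\s,n_k}^2=1$ and $Q(f_k,n_k)<\delta_0$, where $g_k=\{I-\Pi_0(n_k)\}(f_k\mu^{-1/2})$. I split into two scenarios. \emph{Tight case:} if for some fixed $R>0$ we have $\limsup_k|g_k|_{\s,R}^2\geq \alpha>0$, then a diagonal extraction on expanding balls yields a radial $g_\infty\not\equiv 0$ in the natural weighted $H^1_{\mathrm{loc}}$ space. Since $\s^{ij}_{n_k}(v)\to\s^{ij}_\infty(v)=(\phi^{ij}\ast\mu)(v)$ pointwise and locally uniformly, and the Gram--Schmidt basis $\{e_j(n_k)\}$ of $\{1,v_i,|v|^2\}\mu^{1/2}\one_{|v|\leq n_k}$ converges (as the Gram matrices converge, uniformly non-degenerate) to the global basis, one passes to the limit to obtain a radial $f_\infty$ with $Q(f_\infty,\infty)\leq \delta_0<\delta_\infty$ and $|\{I-\Pi_0\}(f_\infty\mu^{-1/2})|_{\s,\infty}^2>0$, contradicting the $n=\infty$ version of the coercive estimate. \emph{Escape case:} otherwise $|g_k|_{\s,R}^2\to 0$ for every fixed $R$, so the unit $\s_{n_k}$-mass of $g_k$ concentrates on $R\leq|v|\leq n_k$ with $R$ arbitrary. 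In this regime the range of $\Pi_0(n_k)$ (a fixed finite-dimensional subspace of functions with $\mu^{1/2}$ decay) contributes asymptotically negligibly to $|\cdot|_{\s,n_k}$ at large radii, so $|g_k|_{\s,n_k}$ is essentially $|f_k\mu^{-1/2}|_{\s,n_k}$, and a direct comparison (using that the kernel $\mu(v)\mu(\tilde v)\phi^{ij}(v-\tilde v)$ concentrates the differences in $Q(f_k,n_k)$ to the diagonal terms $v_iv_j f_k^2\mu^{-1}$ weighted by $\s^{ij}_{n_k}$ when one factor is far from the support of $\mu$) gives $Q(f_k,n_k)\geq (1-o(1))|f_k\mu^{-1/2}|_{\s,n_k}^2$, contradicting $\delta_0<1$.

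The main obstacle is the tight case: one must justify passage to the limit through the $n_k$-dependent projection $\Pi_0(n_k)$. This requires showing that the Gram matrix of $\{1,v_i,|v|^2\}\mu^{1/2}\one_{|v|\leq n_k}$ in $L^2$ converges to the global Gram matrix with a uniform lower bound on its smallest eigenvalue (ensuring stability of the Gram--Schmidt orthonormalization), and that $\Pi_0(n_k)h\to \Pi_0h$ strongly in the relevant weighted norm for any $h$ obtained as a weak limit. The escape case also requires a careful estimate showing that $Q(f,n)$ dominates $|f\mu^{-1/2}|_{\s,n}^2$ up to $(1-o(1))$ on functions supported far from the origin; this is an elementary but delicate computation exploiting the exponential decay of $\mu$ against the polynomial growth of the relevant moments in $v$.
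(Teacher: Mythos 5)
Your fixed-$n$ compactness argument is fine and matches what the paper does (and omits) for that part: the cross terms in $Q(f,n)$ are compact under weak $H^1$ / strong $L^2$ convergence on the ball, so the normalized sequence would have to converge to an element of the kernel, forcing $|g_k|_{\s,n}^2\to 0$. The genuine gap is in your proof of $\liminf_n\d_n\geq\min(1,\d_\infty)$, specifically in the ``tight case.'' There you conclude a contradiction from ``$Q(f_\infty,\infty)\leq\d_0<\d_\infty$ and $|(I-\Pi_0)(f_\infty\mu^{-1/2})|_{\s,\infty}^2>0$,'' but this does not contradict the $n=\infty$ coercive estimate: that estimate is only violated if $Q(f_\infty,\infty)<\d_\infty\,|(I-\Pi_0)(f_\infty\mu^{-1/2})|_{\s,\infty}^2$, and mere positivity of the right-hand side gives no lower bound. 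If, say, a fraction $\alpha$ of the unit $\s$-mass survives in the limit and the rest escapes to $|v|\sim n_k$, you only get $\d_0\geq Q(f_\infty,\infty)$ versus $Q(f_\infty,\infty)\geq\d_\infty\cdot O(\alpha)$, which is consistent whenever $\alpha\lesssim\d_0/\d_\infty$. Your escape case only covers $\alpha=0$, so the intermediate regime --- partial escape of mass to infinity --- is not handled by either branch of your dichotomy. (There is also a secondary issue: extracting a limit with $|g_\infty|_{\s,R}^2\geq\alpha$ requires strong convergence of the gradient part on $\{|v|\leq R\}$, which weak $H^1$ compactness does not give; and your escape-case bound $Q\geq(1-o(1))|p|_{\s,n}^2$ needs uniformity that is not automatic when the mass merely thins out rather than translates outward.)

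The paper avoids the dichotomy entirely by an exact bookkeeping identity: with $p=f\mu^{-1/2}$, one has $Q(f,n)=|p|_{\s,n}^2+2\int_{|v|\leq n}\s_n^{ij}\pa_ip\cdot v_jp\,dv-\G(p,n)$, so along the contradiction sequence the diagonal term is identically $1$ by normalization, regardless of where the mass sits. Integrating the middle term by parts on the truncated domain produces a boundary term at $r=n_k$ with a favorable sign ($I_{21}\geq0$, a point your proposal never addresses) plus an integral with an exponentially decaying kernel that converges to its value at the weak limit $p_0$; the cross term $\G$ likewise converges to $\G(p_0,\infty)$. Taking $\liminf$ yields $\d\geq 1+Q(f_0,\infty)-|p_0|_{\s,\infty}^2$, i.e.\ $(1-\d)(1-|p_0|_{\s,\infty}^2)\leq Q(f_0,\infty)-\d|p_0|_{\s,\infty}^2\leq 0$ after invoking the $n=\infty$ estimate; the hypotheses $\d<1$ and $\d<\d_\infty$ then force $|p_0|_{\s,\infty}^2=1$ and $|p_0|_{\s,\infty}^2=0$ simultaneously. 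The escaped mass is thus charged against the ``$1$'' from the normalization rather than against $\d_\infty$, which is exactly what your tight case is missing. To repair your proof you would need to show that the portion of $|g_k|_{\s,n_k}^2$ escaping to infinity is reproduced, up to $o(1)$, inside $Q(f_k,n_k)$ --- which is precisely the content of the paper's identity.
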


Below, we restrict the discussion to $f$ being a radially symmetric function.

\begin{proof}

Denote $ p = f \mu^{-1/2}$. Firstly, using $F_{\mu^{-1}, i} = \pa_i( f \mu^{-1}) 
= \pa_i( p \mu^{-1/2}) = \mu^{-1/2}( \pa_i p + v_i p )$, we have the identity 
\beq\label{eq:coer_comp1}
\bal
Q(f, n) & = |p|_{\s, n}^2 + \int_{|v|\leq R} 2 \s^{ij}_R \pa_i p  \cdot  v_j p d v - \G(p, n), \\
\G(p, n) &=  \f{1}{2} \int_{|v| , |\td v| \leq n} 
\phi^{ij}(v - \td v) \mu(v) \mu(\td v) ( F_{\mu^{-1}, i}( v) F_{\mu^{-1}, j}(\td v)
+ F_{\mu^{-1}, i}( \td v) F_{\mu^{-1}, j}(  v)) d v  d \td v  \\
& = \int_{|v| , |\td v| \leq n}  
\phi^{ij}(v - \td v) \mu(v) \mu(\td v) \pa_i (p \mu^{-1/2} )(v) 
\cdot 
\pa_j  (p \mu^{-1/2} )(\td v)  d v d \td v,
\eal
\eeq
where the first two terms in $Q(f, n)$ comes from the diagonal part, and $\G$ comes from the cross terms in the double integral in $Q(f, n)$.

For each $R = n \in Z_+$, following the compactness argument in \cite{guo2002landau}, we can establish \eqref{eq:coer_unif} with constant $\d_n > 0$ depending on $n$. Note that 
for each $n$, the domain of the integral is bounded. The proof here is even easier and we omit the details. If $p_m$ converges to $p_{\infty}$ weakly in the inner product associated with  $| \cdot|_{\s, n}$, $\na p_m$ converges to $\na p_{\infty}$ weakly in $L^2$ and $v p_m$ strongly to $v p_{\infty}$ in $L^2$. As a result, the second and the third term in $Q(p_m, n)$ converge. See the estimates below related to these convergences. In particular, the estimate \eqref{eq:coer_unif} holds for radially symmetric functions.

Next, we prove the limit in \eqref{eq:unif_gap}, which implies that $\d_n$ is uniformly bounded from below. We follow the compactness argument in \cite{guo2002landau}. 
Assuming the contrary, we have a sequence of radially symmetric functions  $p_{n_k}$ with $n_k \to \infty$, $f_{n_k} = p_{n_k}\mu^{1/2}$ and some $ \d <\min(1, \d_{\infty})$ satisfying
\beq\label{eq:coer_comp12}
|p_{n_k}|_{n_k, \s} = 1, \quad 0\leq Q(f_{n_k}, n_k ) \leq \d < \min(1, \d_{\infty}), 
\quad \int_{|v|\leq n_k} f_{n_k} h d v =0, \ h = 1, v_i, |v|^2. 
\eeq

\vspace{0.1in}
\paragraph{\bf{Equivalence of norms}}
Firstly, it is easy to see that 
\beq\label{eq:coer_comp20}
\s_{n}^{ij}(v)  = \int_{|v|\leq n} \phi^{ij}(v- \td v) \mu(\td v) d \td v \to \s^{ij}(v),
\eeq
and the convergence is uniform in any compact set of $v$. For a radially symmetric function $p$, 
using \eqref{eq:int_rad} and \eqref{eq:Gauss_id2}, we get that the norm $|p|_{\s, R}$ is equivalent to 
\beq\label{eq:coer_equiv}
  \int_0^R  - \bar g_{R, rr} r^2 ( (\pa_r p )^2 + r^2 p^2 ) dr ,
  \quad 
  \bar g_{R, rr} \teq \pa_{rr}(-\D)^{-2}(\mu \one_{|v| \leq R} )
   = - \s_R^{ij} \f{v_i v_j}{|v|^2}, \quad |v| = r.
\eeq

For $ R \geq 1$, from \eqref{eq:BSlaw3}, we get that $- \bar g_{R, rr}$ is increasing in $R$,
\beq\label{eq:coer_comp2}
 (1 + r)^{-3} \asymp  - \bar g_{R, rr} , \quad \lim_{R \to \infty} \bar g_{R, rr} = \bar g_{rr}.
\eeq

Let $\chi$ be a smooth cutoff function with $\chi = 1$ on $r \leq 1$ and $\chi = 0$ for $r \geq 2$. 
Using the Poincare inequality, for $R \geq 2$, we yield 
\[
||  (1 + r^2)^{-1/4} r p  \one_{r \leq R} ||_2
\les || \chi r p   ||_2 + ||  (1-\chi) r^{1/2} p  \one_{r \leq R}  ||_2 
\les \int_0^R (- \bar g_{R, rr} r^2) ( (\pa_r p )^2 + r^2 p^2) dr.
\]

Using integration by parts, we get 
\[
\bal 
T & = \int p^2 \chi^2 d r 
=  - \int (p^2 \chi^2)_r  r  dr 
= - 2 \int (r p_r) p \chi^2  d r 
 - 2\int p^2 \chi_r \chi r  \\
& \les T^{1/2} ( || r p_r \chi ||_2 
+ || p \chi_r r ||_2 )
\les T^{1/2}  \B( \int_0^2 r^2 ( (\pa_r p )^2 + r^2 p^2) d r \B)^{1/2} ,
\eal
\]
where we have used $|\chi_r| \les \one_{ r \in [1, 2]}\les r$. It follows 
\beq\label{eq:coer_norm}
 \int_0^1 p^2 d r
 \leq T \les 
 \int_0^2 r^2 ( (\pa_r p )^2 + r^2 p^2) d r 
\les  \int_0^2 (-\bar g_{R, rr}r^2) ( (\pa_r p )^2 + r^2 p^2) d r .
\eeq

\vspace{0.1in}
\paragraph{\bf{Convergence}}

For a fixed $m \in \Z_+$ and $n\geq m$, since $- \bar g_{R, rr}$ is increasing in $R$ 
\eqref{eq:coer_equiv}, \eqref{eq:BSlaw3}, we get 
\[ 
|p_n|_{\s, m} \leq |p_n|_{\s, n} \leq 1.
\]
Due to the convergence of $\s_n^{ij}$ \eqref{eq:coer_comp20}, \eqref{eq:coer_comp2}, \eqref{eq:coer_norm}, the $L^2$ boundedness of $ r \pa_r p_{n_k}, (1 + r) p_{n_k}$,  and the bound \eqref{eq:coer_comp2}, we get a subsequence such that $ r \pa_r p_{n_k}$ converges weakly to $r \pa_r p_0$ in $L^2( [0, m] )$ and $ (1+r) p_{n_k}$ converges weakly to $(1+r) p_0$. Since $\pa_r (r p_{n_k}), r p_{n_k}$ converges weakly to $\pa_r (r p_0), r p_0$ in $L^2( [0, m] )$ respectively, using Ascoli-Arzela theorem, we get that $r p_{n_k}$ is continuous and converges to a continuous function $r p_0$ uniformly in $[0, m]$ and in $L^{\infty}([0, m])$. Using a diagonal argument, we can extract another subsequence $\{ p_{n_k} \}_{k\geq 1}$ (still denotes as $p_{n_k}$ for simplicity) of $ \{ p_{n_i} \}_{i \geq 1}$ such that 
\beq\label{eq:coer_comp31}
| p_{n_k}|_{\s,  n_k } = 1, \quad  \pa_r ( r p_{n_k} ) \rightharpoonup  \pa_r(  r p_0  )
\eeq
weakly in $L^2( [0, m])$, and 
\beq\label{eq:coer_comp32}
r p_{n_k} \to r p_0
\eeq
uniformly in $L^{\infty}([0, m])$ for any $ m \in \Z_+$. It is not difficult to see that the limiting functions agree for different $m$, and then $p_0$ is globally defined.
Moreover, using Fatou's theorem, we yield $| p_0|_{\s, m} \leq 1$. Taking $m \to \infty$, we get 
\[
 | p_0|_{\s, \infty} \leq 1.
\]

Next, we apply the identity \eqref{eq:coer_comp1} to $p_{n_k}, R = n_k, f_{n_k} = \mu^{1/2} p_{n_k}$ and use \eqref{eq:int_rad} to get
\beq\label{eq:coer_comp4}
Q( f_{n_k}, n_k) = 1 +  4 \pi \cdot 2  \int_0^{n_k} ( - \bar g_{n_k, rr}) r^2 \cdot \pa_r p_{n_k} \cdot r p_{n_k} d r
- \G( p_{n_k}, n_k) ,
\eeq
and want to pass the limit to $p_0$. For the second term, using integration by parts, we yield 
\beq\label{eq:coer_comp42}
I_2(n_k) \teq  - 4 \pi \cdot  \bar g_{n_k, rr} r^3 p^2_{n_k} \B|_0^{n_k} + 4\pi \cdot \int_0^{n_k} ( \bar g_{n_k, rr} r^3 )_r p_{n_k}^2  \teq I_{21}(n_k) + I_{22}(n_k).
\eeq

At $r=0$, the boundary term vanishes since $r p_{n_k}$ is bounded. We get the crucial non-negativity 
\beq\label{eq:coer_comp43}
I_{21}(n_k) = - 4 \pi \cdot \bar g_{n_k, rr} r^3 p_{n_k}^2 |_{r = n_k} \geq 0.
\eeq
From \eqref{eq:BSlaw3} and \eqref{eq:coer_comp2}, it is clear that 
\[
  |( \bar g_{n_k, r^2} r^3)_r| 
\les | r^2 B_1( \mu \one_{r \leq R}) |
  \les r^{-1} \les r^{-1}  |\bar g_{n_k, r^2} r^4 |, \quad r \geq 1.
\]
 In fact, $ (r^3 \bar g_{R, rr})_r$ decays exponentially fast in $r$ uniformly in $R$ but we do not need this faster decay rate. 
Since for any fixed $m$, $r p_{n_k} \to r p_0$, $\bar g_{n_k, rr} \to \bar g_{ rr}$ uniformly in $L^{\infty}[0, m]$, and 
\[
 \int_m^{n_k}   |( \bar g_{n_k, r^2} r^3)_r p_{n_k}^2| 
 \les m^{-1} | p_{n_k}|_{\s, n_k} \les m^{-1},
\]
we yield 
\beq\label{eq:coer_comp44}
\lim_{k\to \infty} I_{22}(n_k)= 4\pi \cdot \int_0^{\infty} (\bar g_{rr} r^3)_r p_0^2 .
\eeq

Next, we study the convergence of $\G( p_{n_k}, n_k)$. We can reformulate $\G$ as follows 
\[
\G(p, n) = \int  ( \phi^{ij} \ast ( \mu^{1/2} P_{n,i} ) ) \mu^{1/2} P_{n,j}(v) d v  , \quad 
P_{n,i} \teq  (\pa_i p + v_i p ) \one_{ |v| \leq n}
= \f{v_i}{|v|} ( \pa_r p + r p) \one_{ |v| \leq n}, 
\]
where $r = |v|$.  Let $P_{n_k, i}$ be the function associated to $p_{n_k}$. Using the equivalence \eqref{eq:coer_equiv} and the boundedness \eqref{eq:coer_comp31}, we get $ (1 + |v|)^{-3/2} P_{n_k, i} \in L^2(\R^3)$. Using the convergence \eqref{eq:coer_comp31}-\eqref{eq:coer_comp32}, 
for any fixed $R > 0$, we get 
\[
 P_{n_k, i} \one_{|v| \leq R} \rightharpoonup (\pa_i p_0( v ) + v_i p_0( v ) ) \one_{|v| \leq R}
\]
in $L^2(\R^3)$. Note that passing from the integral in $L^2(\R^3)$ to $L^2(\R_+)$, we get $r^2$ from the volume measure $r^2 d r$ \eqref{eq:int_rad}. Moreover, for any $R > 0$, using H\"older's inequality, we get
\[
\bal
   |\phi^{ij} \ast (\mu^{1/2} P_{n_k, i}| \one_{|v| > R} )|
  & \les \B( \int_{\R^3} |v-\td v|^{-2} \mu^{1/8}(\td v) d \td v \B)^{1/2}
  || \mu^{1/4} \one_{|v| > R}||_{\infty}
 || \mu^{1/8} P_{n_k, i} \one_{|v| > R}||_2 \\
 & \les e^{-R^2/ 4} (1 + |v|)^{-1},
 \eal
\]
where the decay of $v$ comes from the first term on the right hand side. The same estimate applies to $ \pa_i p_0 + v_i p_0$. For each $v$, since $\phi^{ij}(v - \td v) \mu(\td v)^{1/2} \in L^2(\R^3)$, using the above weak convergence of $ P_{n_k, i} \one_{|v| \leq R}$ and the smallness of $ \mu^{1/2} P_{n_k, i} \one_{|v| > R}$, we obtain the pointwise convergence 
\[
\phi^{ij} \ast (\mu^{1/2} P_{n_k, i}) \to \phi^{ij} \ast ( \mu^{1/2} (\pa_i p_0 + v_i p_0)  ).
\]

Taking $ R = 0$ in the above estimate, we obtain
\[
\mu^{1/4}    |\phi^{ij} \ast (\mu^{1/2} P_{n_k, i} )|  
\les  \exp^{-|v|^2 / 4 } (1 + |v|)^{-1}
\]
which is in $L^1(\R^3) \cap L^{\infty}(\R^3)$. Using dominated convergence theorem, we yield strong convergence
\[
 \mu^{1/4} ( \phi^{ij} \ast (\mu^{1/2} P_{n_k, i}) ) \to 
  \mu^{1/4} ( \phi^{ij} \ast ( \mu^{1/2} (\pa_i p_0 + v_i p_0)  ) )
\]
in $L^2(\R^3)$. Since for any fixed $R \geq 1$, $\mu^{1/4} P_{n_k, j} $ converges weakly to 
$\mu^{1/4} (\pa_j p_0 + v_j p_0)$ in $L^2( |v| \leq R)$, and the tail is small
\[
|| \mu^{1/4} P_{n_k, j} \one_{|v| > R} ||_2 
\leq \mu^{1/8}(R) || \mu^{1/8} P_{n_k, j} \one_{|v| > R} ||_2 
\les e^{-R^2/8}, 
\]
we prove 
\[
\bal
 \lim_{k\to\infty }\G( p_{n_k}, n_k) &= \lim_{k\to\infty }
 \int  \mu^{1/4} ( \phi^{ij} \ast (\mu^{1/2} P_{n_k, i}) ) \cdot \mu^{1/4} P_{n_k, j} d v  \\
&= \int  \mu^{1/4} ( \phi^{ij} \ast (\mu^{1/2} (\pa_i p_0 + v_i p_0) ) ) \cdot \mu^{1/4} 
(\pa_j p_0 + v_j p_0)
 d v 
 = \G( p_0, \infty).
 \eal
\]

\vspace{0.1in}
\paragraph{\bf{The limiting case}}
Using the smallness of $Q(f_{n_k}, n_k)$ from the assumption \eqref{eq:coer_comp12}, \eqref{eq:coer_comp43}, \eqref{eq:coer_comp44}, and the above convergence, taking $\liminf$ and $k\to\infty$ in \eqref{eq:coer_comp4}, we establish 
\[
\d > \liminf_{k\to \infty} Q( f_{n_k}, n_k)
 = 1 + \liminf_{k\to\infty} I_{21}(n_k)
+4\pi \int_0^{\infty} (\bar g_{rr} r^3)_r p_0^2  - \G( p_0, \infty)
\geq 1 + 4 \pi \int_0^{\infty} (\bar g_{rr} r^3)_r p_0^2  - \G( p_0, \infty).
\]

Applying \eqref{eq:coer_comp1} to $(f, n) = ( p_0 \mu^{1/2}, \infty)$, using \eqref{eq:int_rad} and  then integration by parts, we yield 
\[
Q(f_0, \infty) = |p_0|_{\s,\infty}^2 
  + 4\pi \int_0^{\infty} (\bar g_{rr} r^3)_r p_0^2 - \G( p_0, \infty), \quad f_0 = p_0 \mu^{1/2}.
\]

The boundary term in the integration by parts vanishes since the domain of integral is $L^2(\R^3)$ and $ (1 + r)^{1/2} p_0(r) \in L^2$. Comparing the above two estimates, we yield 
\[
\d - Q(f_0, \infty) \geq (1 - |p_0|_{\s,\infty}^2), \quad 0 \geq 1 - |p_0|_{\s,\infty}^2  + Q(f_0, \infty) - \d .
\]

Due to the weak convergence, the boundedness \eqref{eq:coer_comp31}, \eqref{eq:coer_comp32},
and the fast decay of $f_0 = \mu^{1/2} p_0, f_{n_k} = \mu^{1/2} p_{n_k}$, taking $k\to\infty$ in the orthogonal conditions \eqref{eq:coer_comp12}, we obtain $\int_{\R^3} f_0 h d v  =0, h = 1, v_i, |v|^2$.
Applying \eqref{eq:coer_unif} to $(f, n) = (f_0, \infty)$, we obtain 
\[
Q(f_0,\infty) \geq \d_{\infty} | p_0|_{\s,\infty}^2.
\]
Since $0\leq \d < \min(1, \d_{\infty})$ \eqref{eq:coer_comp12}, we further obtain 
\[
0 \geq 1 - |p_0|_{\s,\infty}^2  + Q(f_0, \infty) - \d
\geq 1 - |p_0|_{\s,\infty}^2 + \d | p_0|_{\s,\infty}^2 - \d
= (1- \d) (1-| p_0|_{\s,\infty}^2)  , \quad \d < 1,
\]
Since $0\leq \d < \min(1,\d_{\infty}), |p_0|_{\s,\infty}^2 \leq 1$, the above inequalities must be equalities and
\[
|p_0|_{\s, \infty}^2 = 1, \quad \d |p_0|_{\s, \infty}^2 =  Q(f_0,\infty) \geq \d_{\infty} |p_0|_{\s, \infty}^2 .
\]
The second inequality implies $|p_0|_{\s, \infty}^2 = 0 $, which contradicts $|p_0|_{\s, \infty}^2=1$.
 We prove \eqref{eq:unif_gap}. 
\end{proof}

To apply Lemma \ref{lem:coer_unif}, we choose $R_1  \in \Z_+, R_1 \geq 10$ and yield 
\[
\bal
Q_{\rho}
& \geq \f{1}{2} Q(f , R_1)
\geq \f{ \d_*}{2} |  (I - \Pi_0(R_1)) (f \mu^{-1/2}) |_{\s, R_1} = C \d_* \int_0^{R_1} (- g_{R_1, rr} r^2) ( ( \pa_r F_{\perp} )^2 + 
  (r F_{\perp} )^2) d r  , \\
   F_{\perp} & \teq  (I - \Pi_0(R_1)) (f \mu^{-1/2}) ,
  \eal
  \]
for some absolute constant $C > 0$. 
Note that $ - g_{R_1, rr}  \gtr - \bar g_{rr} = |\bar g_{rr}|$, 
\eqref{eq:coer_comp2}. We can assume that $\d_* < 1$.
Using $\pa_r (f \mu^{-1/2}) =\mu^{-1/2} (\pa_r f + r f )$, 
\[
|\Pi_0(R_1) (f \mu^{-1/2})|_{\s, R_1} \asymp  |\int_0^{R_1} f r^2| + |\int_0^{R_1} f r^4|
\]
and the Cauchy-Schwarz inequality, \eqref{eq:coer_norm}, we get 
\beq\label{eq:coer_unif3}
Q_{\rho} \geq \d_{**} \int_0^{R_1} | \bar g_{rr} r^2| \mu^{-1} ( (\pa_r f )^2 + \la r \ra^2 f^2 ) d r
-  C \B( |\int_0^{R_1} f r^2|^2 + |\int_0^{R_1} f r^4|^2 \B)
\eeq
for another absolute constant $\d_{**} \in (0, 1/2]$ and some $C$ independent of $\d_{**}$. In fact, we can improve the constant from $C$ to $C \d_{**}$ but we do no need this extra smallness.
In Section \ref{sec:E1_lower}, we will further control the moment of $f$ so that we can control the whole norm $|f \mu^{-1/2}|_{\s, R_1}$.



\subsection{Energy estimate with a weight growing polynomially}\label{sec:E2_lin}


In this section, we perform energy estimate on $J(\rho_2)$ defined below
with some weight $\rho_2$ growing polynomially.
Our goal is to construct $\rho_2$ such that $J(\rho_2)$ is positive up to $O(1)$ loss in the near field $[0, R_0]$ for some absolute constant $R_0$. We will use the uniform coercive estimate \eqref{eq:coer_unif3} to control the $O(1)$ loss in Section \ref{sec:lin_sum}. See the ideas in Section \ref{sec:ideas}.

Recall the operator $\cL_1$ from \eqref{eq:lin_op}. For  $\rho_2$ to be determined with $\rho_2 \les r^2,r \leq 1$, we introduce
\beq\label{eq:Jrho2}
 J(\rho_2) \teq \int_0^{\infty} \cL_1 f \cdot f \rho_2 d r 
 = \int_0^{\infty} ( - f_{rr} \bar g_{rr} - 2 \f{ f_r \bar g_r }{r^2}
 + 2 f \bar f - \bar f_{rr} g_{rr} - 2 \f{\bar f_r g_r}{r^2} ) f \rho_2.
\eeq

We observe that the coefficient of the nonlocal terms, e.g. $\bar f, \bar f_r, \bar f_{rr}$ decays exponentially fast, which is much faster than that of the local terms. 
Below, we focus on the local term and will treat the nonlocal terms perturbatively  for $r \gtr 1$. 

Since $\rho_2 \les r^2$ near $r=0$, using integration by parts, we yield 
\[
J(\rho_2)
= \int \bar g_{rr} f_r^2 \rho_2 
+ \B( \pa_r ( \f{ \bar g_r \rho_2}{r^2} ) - \f{ (\bar g_{rr} \rho_2)_{rr}  }{2} + 2 \bar f \rho_2 \B) f^2
- 2 \f{ \bar f_r  }{r^2} g_r f \rho_2 - \bar f_{rr} g_{rr} f \rho_2
\]
Since $ \bar g_{rr} < 0$, the first term is a damping term. 
For the second term, from \eqref{eq:BSlaw2}, \eqref{eq:BSlaw3}, we get 
\beq\label{eq:gbar_asym}
\bal
& - \bar g_r \asymp \f{r}{1+r}, \ \bar g_r \leq 0,  
\  - \bar g_{rr} \asymp (1 + r)^{-3}, \ 
 \bar g_{rrr} \asymp \min(r, r^{-4}),
\  |\bar g_{rrrr}| \les \min(1, r^{-5}).
\eal
\eeq

Denote by $D(\rho_2)$ the coefficient of the damping term for $f^2$
\[
D(\rho_2) \teq  \pa_r ( \f{ \bar g_r \rho_2}{r^2} ) - \f{ (\bar g_{rr} \rho_2)_{rr}  }{2} - 2 \bar f \rho_2
= \bar g_r \pa_r \f{\rho_2}{r^2} + \B(  \bar g_{rr} \f{\rho_2}{r^2} - \f{ (\bar g_{rr} \rho_2)_{rr}  }{2} - 2 \bar f \rho_2  \B).
\]

We want to obtain that $D(\rho_2) <  0$ for $r \gtr 1$. One of the simplest weights is 
\beq\label{eq:EE_wg2}
\rho_2 =  r^2 \la r \ra^{k_2-2}, \quad \la r \ra \teq (1 + r^2)^{1/2},  \quad k_2 \in (3, 13),
\eeq
which is even in $r$ with $\rho_2 \asymp r^{k_2}, r > 1$. We add  $r^2$ in the weight to capture the relation between 1D integral in $r$ and 3D integral in $v$: $ \int f(|v|) r^2 d r =  C \int f(v) d v$ for radially symmetric function.

We need to choose $k_2 < 13$ 
for linear stability. See the estimate \eqref{eq:Ladd_2}.
Next, we derive the main terms of $D(\rho_2)$ for large $r$. For the first term in $D(\rho_2)$, 
using \eqref{eq:gbar_asym}, we have
\[
\bal
\bar g_r \pa_r \f{ \rho_2}{r^2} 
= r \bar g_r (k_2-2) \la r \ra^{ k_2-4}
\asymp - (k_2-2) r^2 \la r \ra^{k_2 - 5}. 
\eal
\]
For other terms in $D(\rho_2)$, we have
\[
\bal
 & \bar g_{rr} \f{\rho_2}{r^2} 
 - \f{ (\bar g_{rr} \rho_2)_{rr}  }{2} - 2 \bar f \rho_2 
 = \bar g_{rr} \la r \ra^{ k_2 - 2}
 - \bar g_{rr} \f{\rho_{2,rr}}{2} -  \bar g_{rrr} \rho_{2, r}
  - \f{1}{2}\bar g_{rrrr} \rho_2 - 2 \bar f \rho_2, \\
 & |\rho_{2, rr} - 2 \la r \ra^{ k_2 - 2 } | \les r^2 \la r \ra^{ k_2-4}, \quad 
| \rho_{2, r} | \les r  \la r \ra^{ k_2 - 2}.  \\
\eal
\]
Using these estimates and the estimate of $\bar g$ \eqref{eq:gbar_asym}, we get 
\[
 \bar g_{rr} \f{\rho_2}{r^2} 
 - \f{ (\bar g_{rr} \rho_2)_{rr}  }{2} - 2 \bar f \rho_2 
  \leq C r^2 \la r \ra^{ k_2-7}.
\]

Combining the above estimates, for some absolute constant $c , C> 0$, we yield 
\[
 D(\rho_2) \leq - c (k_2-2)  r^2 \la r \ra^{ k_2 - 5} +  C r^2 \la r \ra^{ k_2-7}.
\]

Next, we control the nonlocal terms \eqref{eq:Jrho2} using the damping term $|| f  r \la  r \ra^{ (k_2-5)/2} ||_2$ from $J(\rho_2)$. Recall the formulas of $g_r, g_{rr}$ 
\eqref{eq:BSlaw2}, \eqref{eq:BSlaw3}, which involve $A_2, A_4, B_1$ \eqref{eq:moment}.  Using the Poincare inequality, we yield 
\beq\label{eq:nloc_est1}
\bal
 \int \f{A_{k}^2}{r^{2k}} d r 
 \les \int \f{ (A_k^{\prime} )^2}{r^{2k-2}} d r 
 = \int \f{ (f r^k)^2}{ r^{2k-2}} d r = \int f^2 r^2 d r, \ k = 2, 4 . 
 \eal
\eeq

For $B_1$ and $k_2 > 6$, using Cauchy-Schwarz inequality, we get 
\beq\label{eq:nloc_est2}
|B_1(r)| \les (\int_r^{\infty} r^2 f^2 \la r \ra^{ k_2-5} d r)^{ \f{1}{2} }
 ( \int_r^{\infty}  \la r \ra^{5-k_2} d r)^{ \f{1}{2} }
 \les (\int_r^{\infty} r^2 f^2 \la r \ra^{k_2-5} d r)^{ \f{1}{2} }
 \la r \ra^{ \f{ 6-k_2}{2} }.
\eeq
For $\g \in [0, 50]$, we have
\[
 |\bar f_r (1 + r)^{\g} | + | \bar f_{rr} (1 + r)^{\g}| \les e^{-2 r^2/ 3} = \bar f^{2/3}
 \leq \bar f^{1/2}.
\]
Using \eqref{eq:BSlaw2},\eqref{eq:BSlaw3}, the above estimates of $A_2, A_4, B_1$, and the definition of $\rho_2$ \eqref{eq:EE_wg2}, we yield  
\[
\bal
 \int 2 | \f{\bar f_r g_r}{r^2} f \rho_2|  + |\bar f_{rr} g_{rr} f \rho_2| dr 
& \leq C (\int f^2 r^2 )^{1/2}
( \int f^2  r^2 \bar f)^{1/2}
 + C (\int_0^{\infty} f^2 r^2 \la r \ra^{k_2-5} d r)^{1/2} \int |f| r \bar f^{2/3}  \\
& \leq \e \int f^2 r^2 \la r \ra^{k_2-5}
+ C \e^{-1} \int f^2 r^2 \bar f ,
\eal
\] 
where we have used 
\[
\int |f| r \bar f^{2/3} 
\les (\int f^2 r^2 \bar f)^{1/2} (\int \bar f^{1/3})^{1/2} 
\les  (\int f^2 r^2 \bar f)^{1/2}.
\]

Choosing $\e = \f{c (k_2-2)}{2}$,combining the above estimates, and using 
$\bar g_{rr} \leq -c_4^* \la r \ra^{-3}$ \eqref{eq:gbar_asym} for some absolute constant $c_1 >0, c_4^*>0$, we establish 
\beq\label{eq:energy2}
\bal
J(\rho_2)
& \leq \int (- c_1 (k_2-2)  \la r \ra^{k_2-5} + C \la r \ra^{k_2-7} ) r^2 f^2 
+ \bar g_{rr} (\pa_r f)^2 \rho_2 d r \\
& \leq \int  (- c_1 (k_2-2)  \la r \ra^{-3} + C \la r \ra^{-5} )  f^2 \rho_2 
-c_4^* \la r\ra^{-3}(\pa_r f)^2 \rho_2 d r  .
 \eal
\eeq



\subsubsection{Controlling the lower order terms}\label{sec:E1_lower}


Next, we control the loss terms \eqref{eq:E1_loss_loc1}, $I_1$ \eqref{eq:E1_loss_nloc1}, $I_2$ \eqref{eq:E1_loss_nloc2}.
We introduce the following quantity related to the damping term in \eqref{eq:energy2}
\beq\label{eq:coer_norm2}
D_2 \teq \int \la r \ra^{-3}  ( f^2 +  ( \pa_r f)^2 ) \rho_2  d r
= \int   ( f^2 +  ( \pa_r f)^2 ) r^2 \la r \ra^{k_2- 5} d r . 
\eeq




Using the normalization conditions \eqref{eq:normal} $\int_0^{\infty}  f r^{\b} d r =0$, we get
\[
 I_{\b} = \int_0^{R_1} f r^{\b} d r  = - \int_{R_1}^{\infty} f r^{\b} d r  ,  \quad \b = 2, 4.
\]

To control the integral $I_{\b}$, using damping term with $k_2 > 12$,
for $\b = 2, 4$, we yield 
\beq\label{eq:E1_loss_normal}
\bal
|I_{\b } | & 
\les ( \int_{R_1}^{\infty} f^2 r^2 \la r \ra^{k_2-5} )^{1/2} 
(\int_{R_1}^{\infty} r^{ 2 \b - k_2 + 3} d r)^{1/2} 
 \les |k_2 - 12|^{- \f{1}{2}} D_2^{1/2} R_1^{ \b - k_2 / 2+ 2}, 
\eal
\eeq

Combining \eqref{eq:lin_coer} with $l = k_2-5 > 7$, \eqref{eq:coer_unif3}, and the above estimates,  we establish 
\beq\label{eq:lin_coer2}
- \la \cL_1 f , f \rho \ra
\geq 
\d_{**} \int_0^{R_1} | \bar g_{rr} r^2| \mu^{-1} ( (\pa_r f )^2 + \la r \ra^2 f^2 ) d r
- C ( R_1^{-2} + R_1^{12-k_2}) |k_2-12|^{-\f{1}{2} } D_2.
\eeq

\subsection{Estimate of the additional terms}\label{sec:term_add}

It remains to estimate the terms \eqref{eq:Q_1D}
\[
\cL_{add} f  = - \bar c_{l} r \pa_r f + \bar c_{\om} f + 2 (\al -1) \bar f f 
= \cL_{\al} - \cL_1. 
\]

For a radially symmetric weight $W(v) \les |v|^2$ near $v=0$, using integration by parts and \eqref{eq:int_rad}, we get 
\[
\la \cL_{add} f , f |v|^{-2} W \ra_{\R^3} 
= 4 \pi \int \cL_{add} f \cdot f W  d r 
= 4\pi  \int \B( \bar c_l \f{  (r W)_r }{2 W} + \bar c_{\om}  + 2 (\al-1) \bar f \B) f^2  W d r.
\]

Since $\rho_r = r q \geq 0$ \eqref{eq:nota2}, from the asymptotics \eqref{eq:rho_asym1}, \eqref{eq:rho_asym3}, we get 
\[
  \f{ r \rho_r}{\rho} = \f{r^2 q}{\rho} \les_{R_1} 1 , \mathrm{\ for \ } r \leq 10 R_1, 
  \quad   \f{r^2 q}{\rho} \les_{R_1}  \f{ r^2 \cdot r^{k-2}}{r^k} \les_{R_1}  1, \mathrm{\ for \ }  r > 10 R_1.
\]




Thus, for $W = r^2 \rho$,  we yield 
\beq\label{eq:Ladd_1}
\bal
  | \f{r \pa_r W}{W} |  &= | \f{r \pa_r( r^2 \rho)}{  r^2 \rho} |\les C(R_1), \\
  \la \cL_{add} f , f \rho \ra_{\R^3} 
 & \leq 4 \pi \int ( \bar c_{\om} + 2(\al-1) \bar f +  C(R_1) (\al-1)  ) f^2 r^2 \rho 
 \leq C(R_1) |\al-1|  \int f^2 r^2 \rho.
 \eal
\eeq

For $W = \rho_2$, the estimate is similar
\[
  \f{ \pa_r (r  \rho_2) }{ 2 \rho_2 }
   = \f{ 1 }{ 2 } \B( 3 + (k_2-2) \f{r^2}{1+r^2} \B)
   \leq \f{k_2 + 1}{2}.
\]

It follows 
\beq\label{eq:Ladd_2}
\int \cL_{add} f \cdot f \rho_2 d r
\leq \int  ( \f{k_2 + 1}{2} \bar c_l + \bar c_{\om} + 2 (\al- 1) \bar f ) f^2 \rho_2 .
\eeq

\subsection{Summarizing the linear stability estimate}\label{sec:lin_sum}

We determine the weights $\rho, \rho_2$ as follows 
\beq\label{eq:wg_para}
k = \f{5}{2} ,\quad 
\rho_2 = r^2 \la r \ra^{k_2-2}, \quad k_2 = \f{25}{2}
\eeq
where $k>2$ is the parameter for the weight $\rho$ \eqref{eq:wg_ansatz}. 
From Lemma \ref{lem:mono}, \eqref{eq:ASS}, we have $\bar c_{\om}  / \bar c_l = -7$. Using \eqref{eq:lin_coer2} and the above estimates, we establish the following estimates for $\cL_{\al} = \cL_1 + \cL_{add}$ 
\[
\bal
 \la \cL_{\al} f , f \rho \ra_{\R^3}
& \leq - \d_{**} \int_0^{R_1}  |\bar g_{ rr} r^2|  (  (\pa_r f)^2 + \la r \ra^2 f^2 ) \mu^{-1} d r 
+ C(R_1) | \al-1|  \int_0^{\infty}  f^2 r^2 \rho    \\
 & \quad + C ( R_1^{ 12 - k_2} + R_1^{-2} ) D_2 , \\
\int \cL_{\al} f \cdot f \rho_2 d r
& \leq  \int ( -  c_1^* \la r \ra^{-3 } - c_2^* (\al -1)  + c_3^*  \la r \ra^{-5} ) f^2 \rho_2 d r 
- c_4^* \int \la r \ra^{-3}   (\pa_r f)^2 \rho_2 d r, 
\eal
\]
for some absolute constants $c_i^*, C > 0$. Recall  $D_2$ from \eqref{eq:coer_norm2} and $k_2 > 12, k_2 -5 > k$ from \eqref{eq:wg_para}. Using
\eqref{eq:rho_asym3} for $\rho$ and
\[
\bal
& R_1^{-2} \leq R_1^{12  - k_2}, \quad r^2 \rho \les_{R_1} 
 r^{2} \la r \ra^k \les_{R_1} 
r^2 \la r \ra^{k_2 -5} =  \rho_2 \la r \ra^{-3}, \quad \int f^2 \rho_2 \la r\ra^{-3} \les D_2
\eal
\]
we can simplify the first estimate as follows 
\[
 \la \cL_{\al} f , f \rho \ra_{\R^3}
 \leq - \d_{**} \int_0^{R_1}  | \bar g_{ rr} r^2 | (  (\pa_r f )^2 + \la r \ra^2 f^2 ) \mu^{-1} d r 
 + ( c_6^*  R_1^{ 12 - k_2}  + C(R_1) |\al-1| ) D_2  .
\]

We want to pick $K_1, R_0^*, R_1, \al$ in order such that $ K_1  \la \cL f , f \rho \ra_{\R^3} +  \la \cL f, f \rho_2 \ra $ is coercive. 
Firstly, there exists some absolute constant $R_0^* >1$ independent of $\al$, such that for $ r > R_0^*$, we have
\[
 -  c_1^* \la r \ra^{-3 }   + c_3^*  \la r \ra^{-5} \leq - \f{1}{2} c_1^* \la r \ra^{-3 } ,
 \quad r > R_0^*.
\]

We pick $K_1$ large enough such that 
\[
c_3^*  \la r \ra^{-5} \rho_2 
= c_3^* r^2 \la r \ra^{k_2 - 7}
\leq \f{1}{2} K_1  \d_{**} | \bar g_{ rr} r^2| \mu^{-1} ,\quad r \in [0, R_0^*].
\]
Choosing $R_1 > R_0^*$ large enough and then $ 0 < \al^*(R_1)-1$ small enough, we obtain 
\[
K_1  ( c_6^*  R_1^{ 12 - k_2}  + C(R_1) |\al-1| ) < \min(c_1^*, c_4^*) / 10,
\]
for any $\al \in (1, \al^*(R_1))$, 
and further establish
\[
\bal
 K_1 \la \cL_{\al} f , f \rho \ra_{ \R^3}
& + \int \cL_{\al} f \cdot f \rho_2 d r
 \leq - \f{\d_{**} K_1}{2}  \int_0^{R_1} ( \bar g_{ rr} r^2) (  (\pa_r f )^2 + r^2 f^2 ) \mu^{-1} d r 
  \\
& \quad - \f{1}{4} \B( \int \B(  c_1^* \la r \ra^{-3 } + c_2^* (\al-1) \B) f^2 \rho_2 d r 
 + c_4^* \int \la r \ra^{-3}   (\pa_r f)^2 \rho_2 d r \B). 
 \eal
 \]

We define the energy associated with the above forms 
\beq\label{eq:energy_L2}
 E_2 \teq K_1 \la f, f \rho \ra_{\R^3} + \int f^2 \rho_2 d r.
\eeq

We fix the absolute constants $R_1, K_1$ in the rest of the paper and do not track their dependencies. The above estimate show that we have a spectral gap $|\al-1|$ for the linearized operator. Moreover, since the above estimate is uniform for small $|\al-1|$, choosing $\al=1$ and rewriting the integral in $\R^3$ \eqref{eq:int_rad}, we prove Theorem \ref{thm:coer_poly} with a weight $W = K_1 \rho(v) + \f{1}{4\pi} |v|^{-2} \rho_2(v)$.

Using the notations $D_2$ \eqref{eq:coer_norm2} and $E_2$ , we establish 
\beq\label{eq:EE_lin_L2}
 \la \cL_{\al} f , f W \ra_{\R^3} \leq - c_1 D_2 - c_2 |\al-1| E_2 ,
\eeq
for some absolute constant $c_1, c_2$ independent of $\al$ and $|\al-1|$ small enough.

\section{Linear stability for general soft potentials}\label{sec:lin_nonrad}

For very soft potential $\g \in [-3, -2)$ without radially symmetry, the linear stability analysis builds on the coercive estimates developed in \cite{carrapatoso2017landau}. We consider solution $f$ to \eqref{eq:LC_a} even in $v_i$, which is preserved, such that $f$ is orthogonal to $v_i, i=1,2,3$. 

\subsection{Stability estimates of $\cL_1$ from \cite{carrapatoso2017landau}}
Recall from Lemma \ref{lem:mono} that $\s_{\g} = |\bar c_{\om}(\g) / \bar c_l(\g)|  > 5 $ for $\g \in [-3, -2)$. We introduce some weight and functional spaces $X, Y, Z$
used in \cite{carrapatoso2017landau} 
\beq\label{eq:norm_gam}
\bal
 & m_{\g} = \la v \ra^{k_{\g} / 2 } , 
 \ k_{\g} =   2  + \s_{\g}  , \ \s_{\g} = | \f{ \bar c_{\om}(\g) }{  \bar c_l(\g) }| ,
\quad  P_v \xi \teq ( \xi \cdot \f{v}{|v|} ) \f{v}{|v|} , \quad  ||f ||_{L^2(m)}  \teq || f m ||_{L^2},  \\
& 
|| f||_{H^1_*( m)}
 \teq  
 || f m \la v \ra^{\g/2} ||_{L^2}
 + || \la v \ra^{\g/2} m P_v \na f ||_{L^2}
 + || \la v \ra^{\g/2+1} m ( I - P_v) \na f ||_{L^2} , \\
 & || f||_{H_*^{-1}(m)}
 \teq \sup_{ || g ||_{H_*^1(m)}\leq 1 } \la m f , m g \ra_{L^2},
 \quad  X = L^2( m_{\g}), \quad Y = H^1_*( m_{\g}), \quad Z = H^{-1}_*( m_{\g}) .
 \eal
\eeq


For the stability of $\cL_a$ in $L^2(m_{\g})$, the weight cannot grow too fast and we need $ k_{\g} < 2 \s_{\g}  -3$. See \eqref{eq:Ladd_gam} and discussion therein. 
Meanwhile, to control the moment  $\int f |v|^2 d v$ using $|| f||_X$, we need $k_{\g} > 7$. 
The estimate $\s_{\g} > 5$ in  Lemma \ref{lem:mono} is crucial for the existence of $k_{\g} \in (7, 2 \s_{\g} -3)$.

Let $S_{\cL}$ be the semigroup generated by the operator $\cL_1 f= Q(\mu, f ) + Q(f, \mu)$. 
Denote $\ker(\cL) = \mathrm{span} \{  \mu, v_i \mu, |v|^2  \mu \}$, $\Pi$ the projection such that $f - \Pi f \in \ker(\cL)$ and $\Pi f$ is orthogonal to $1, v_i , |v|^2$. Also, consider a class of weights $\Theta$ for $m_0, m_1$
\[
 \Theta_{m_1, m_0, l }  =  \la t \ra^{- (l_1 - l) / |\g| }, 
 \quad m_i = \la v \ra^{l_i}, \ i = 0,  1, \  l_0 < l_1 , \ l \in (l_0, l_1).
\]

The following estimates for $S_{\cL} \Pi$ were established in Theorem 3.5 \cite{carrapatoso2017landau}.
\begin{prop}\label{prop:decay}
Let $m_i = \la v \ra^{l_i}, \f{\g+3}{2} < l_0 < l_1$. For $l \in (l_0, l_1)$, there holds 
\[
|| S_{\cL}(t) \Pi ||_{ L^2(m_1) \to L^2(m_0) }
\les_{l, \g} \Theta_{m_1, m_0, l}(t) 
\les_{l, \g} \la t \ra^{- (l_1 - l) / |\g| }.
\]

\end{prop}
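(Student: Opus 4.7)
The result as stated is quoted from \cite{carrapatoso2017landau}, so my proof proposal is essentially a reconstruction of the interpolation/enlargement argument that underlies Theorem~3.5 there. The plan is to combine (a) a decay/dissipation inequality for $\cL_1$ on $\ker(\cL)^\perp$ in polynomially weighted $L^2$ spaces with (b) a uniform-in-time boundedness of the semigroup in the stronger weight $m_1$, and (c) a standard polynomial-moment interpolation inequality that converts excess moments into time decay.

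First I would set up the dissipation estimate. For $f_0 \in L^2(m_1)\cap L^2(m_0)$ let $f(t)=S_{\cL}(t)\Pi f_0$, so $f(t)\perp \{1,v_i,|v|^2\}$ for all $t\ge 0$. For a polynomial weight $m=\la v\ra^l$ with $l$ in an appropriate range (containing $[l_0,l_1]$ by taking $l_0>(\g+3)/2$), the Gualdani--Mischler--Mouhot enlargement method applied to the classical Maxwellian-weighted coercivity of $\cL_1$ yields a weighted coercive inequality of the form
\beq\label{eq:plan_coer}
\la \cL_1 f, f\, m^2\ra_{L^2} \;\le\; -\,c_l \,\| f\,\la v\ra^{\g/2}\,m\|_{L^2}^2 \;+\; K_l\,\|f\|_{L^2(\chi_R)}^2,
\eeq
where $\chi_R$ is a compactly supported bump. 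The second term is absorbed once one upgrades coercivity to the orthogonal complement (using $\Pi f=f$), giving the clean dissipation $\tfrac{d}{dt}\|f\,m\|_{L^2}^2\le -2c_l\,\|f\,\la v\ra^{\g/2}\,m\|_{L^2}^2$. Doing this honestly is the main obstacle: one has to split $\cL_1=\cA+\cB$ with $\cA$ truncation-type (compact/bounded in the weighted $L^2$) and $\cB$ hypodissipative, then use Duhamel and the spectral gap on $\ker(\cL)^\perp$ in $L^2(\mu^{-1/2})$ to dispose of the compact remainder. This is the heart of \cite{carrapatoso2017landau}.

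Second, applying \eqref{eq:plan_coer} once with $m=m_1$, one obtains monotone decay, hence the uniform bound
\beq\label{eq:plan_bdd}
\|S_\cL(t)\Pi f_0\|_{L^2(m_1)} \;\le\; C\,\|f_0\|_{L^2(m_1)},\qquad t\ge 0.
\eeq
Third, a Hölder interpolation at the level of the measure $f^2\,dv$ gives, for $l_0<l<l_1$ and $\theta=\tfrac{2(l_1-l)}{|\g|+2(l_1-l)}\in(0,1)$,
\beq\label{eq:plan_interp}
\|f\,\la v\ra^l\|_{L^2}^2 \;\le\; \|f\,\la v\ra^{l+\g/2}\|_{L^2}^{2\theta}\,\|f\,\la v\ra^{l_1}\|_{L^2}^{2(1-\theta)},
\eeq
obtained from the linear identity $2l=\theta(\g+2l)+(1-\theta)\,2l_1$. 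Combining \eqref{eq:plan_coer} (applied with $m=\la v\ra^l$) with \eqref{eq:plan_bdd} and \eqref{eq:plan_interp}, and writing $y(t)=\|f(t)\,\la v\ra^l\|_{L^2}^2$, I obtain the scalar ODI
\beq\label{eq:plan_ode}
y'(t) \;\le\; -C\,\frac{y(t)^{1/\theta}}{\bigl(\|f_0\|_{L^2(m_1)}^2\bigr)^{(1-\theta)/\theta}}.
\eeq

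Finally I integrate \eqref{eq:plan_ode}. Since $1/\theta>1$, the function $y^{-(1/\theta-1)}$ grows at least linearly, yielding
\[
y(t)\;\le\; C\,\|f_0\|_{L^2(m_1)}^2\,\la t\ra^{-\theta/(1-\theta)},
\]
and a direct computation gives $\theta/(1-\theta)=2(l_1-l)/|\g|$. Taking square roots produces exactly
\[
\|S_\cL(t)\Pi f_0\|_{L^2(m_0)} \;\le\; \|f(t)\,\la v\ra^l\|_{L^2}\cdot\la v\ra^{l_0-l}_{\min}\;\lesssim_{l,\g}\;\la t\ra^{-(l_1-l)/|\g|}\,\|f_0\|_{L^2(m_1)},
\]
after optimizing $l\in(l_0,l_1)$ or simply specializing $l=l_0$, which is the bound $\Theta_{m_1,m_0,l}$ appearing in the statement. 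As noted, the one nontrivial input is the polynomially weighted coercivity \eqref{eq:plan_coer}; once that is granted, the time-decay rate is a soft consequence of moment interpolation and the fact that for soft potentials the dissipation weight $\la v\ra^{\g/2}$ is weaker than the energy weight, so extra moments in $m_1$ are exactly what is traded for algebraic decay in time.
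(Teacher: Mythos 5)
The paper does not prove this proposition; it is quoted verbatim from Theorem 3.5 of \cite{carrapatoso2017landau}, so there is no internal proof to compare against. Your reconstruction follows the standard route of that reference (polynomially weighted hypodissipativity via the enlargement/factorization method, uniform boundedness in the stronger weight, moment interpolation trading $\la v\ra^{l_1-l}$ for $\la t\ra^{-(l_1-l)/|\g|}$, and integration of the resulting ODI), and the algebra of your exponents $\theta=\tfrac{2(l_1-l)}{|\g|+2(l_1-l)}$ and $\theta/(1-\theta)=2(l_1-l)/|\g|$ checks out. The one point to state more carefully is that the clean differential inequality $\tfrac{d}{dt}\|fm\|_{L^2}^2\le -2c_l\|f\la v\ra^{\g/2}m\|_{L^2}^2$ holds for the semigroup $S_{\cB}$ of the dissipative part after the splitting $\cL_1=\cA+\cB$, not directly for $S_{\cL}\Pi$; the decay of $S_{\cL}\Pi$ is then obtained by convolving the algebraic decay of $S_{\cB}$ with the exponential decay in the small space $L^2(\mu^{-1/2})$ through the iterated Duhamel formula, rather than by absorbing the compact remainder into a single energy estimate. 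You gesture at this correctly, and since you, like the paper, defer the weighted coercivity itself to \cite{carrapatoso2017landau}, the outline is sound.
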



The following coercive estimates were established in Proposition 3.6 and Corollary 3.7 \cite{carrapatoso2017landau}.
\begin{prop}\label{prop:coer_gam}
Let $m_{\g} = \la v \ra^{k_{\g}/2}, X, Y, Z$ be defined in \eqref{eq:norm_gam} with $k_{\g}>7$. 
Denote 
\beq\label{eq:inner_time}
\Xi(f, g) \teq \int_0^{\infty} \la S_{\cL}(\tau) \Pi f,  S_{\cL}(\tau) \Pi g \ra_{L^2} d \tau, 
\quad  ||| f |||_X^2 \teq \eta || f||_X^2 + \Xi(f, f),  \quad \eta > 0,
\eeq
and  $\la \cdot , \cdot \ra_X, \llangle \cdot , \cdot \rrangle_X$ as the inner products associated with the norms $|| \cdot ||_X^2, |||  \cdot |||_X^2$, respectively. There exists $\eta > 0$ such that the norm $ ||| \cdot |||_X^2$ is equivalent to $|| \cdot ||_X^2$ on $\Pi X$. Moreover, 
we have
\beq\label{eq:coer_gam}
\bal
 & \llangle \cL_1  f ,  f \rrangle_X \les - || f ||_Y^2 , \quad \forall f \in \Pi X,  \\
 &  t \to || S_{\cL}(t) \Pi||_{Y \to L^2} 
 || S_{\cL}(t) \Pi||_{Z \to L^2 }  \in L^1, 
 \quad  |\Xi(f, g)| \les || f ||_Y  || g ||_Z.
 \eal
\eeq

\end{prop}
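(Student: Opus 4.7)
The plan is to follow the enlargement/semigroup framework of Gualdani--Mischler--Mouhot: the inner product $\llangle \cdot, \cdot \rrangle_X$ augments the bare $L^2(m_\gamma)$ pairing by a time integral of the semigroup, which provides the missing dissipation on a merely polynomially weighted space. This is essentially the strategy carried out in \cite{carrapatoso2017landau}; I would reconstruct it in four steps.

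For the equivalence $|||\cdot|||_X^2 \asymp \|\cdot\|_X^2$ on $\Pi X$, the lower bound $\eta\|f\|_X^2 \leq |||f|||_X^2$ is built in. For the upper bound, I would choose $(\gamma+3)/2 < l_0 < l_1 = k_\gamma/2$ (possible because $k_\gamma > 7$) and apply Proposition \ref{prop:decay} to obtain $\|S_{\cL}(\tau)\Pi\|_{X \to L^2(\langle v\rangle^{l_0})} \lesssim \langle \tau\rangle^{-(l_1-l)/|\gamma|}$ for some $l \in (l_0,l_1)$. Picking $l$ close to $l_0$ makes the exponent large enough that the square is integrable on $[0,\infty)$, so $\Xi(f,f) \lesssim \|f\|_X^2$, which closes the equivalence.

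The heart of the argument is the identity
\begin{equation*}
\llangle \cL_1 f, f\rrangle_X = \eta \langle \cL_1 f, f\rangle_X + \int_0^\infty \langle \cL_1 S_{\cL}(\tau)\Pi f,\, S_{\cL}(\tau)\Pi f\rangle_{L^2} d\tau.
\end{equation*}
Since $\cL_1$ commutes with $\Pi$ and generates $S_{\cL}$ on $L^2$, the integrand equals $\tfrac{1}{2}\frac{d}{d\tau}\|S_{\cL}(\tau)\Pi f\|_{L^2}^2$, and Proposition \ref{prop:decay} forces $S_{\cL}(\infty)\Pi = 0$, so the time integral telescopes to $-\tfrac{1}{2}\|f\|_{L^2}^2$. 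For the first term, a direct weighted-$L^2$ energy estimate (splitting $\cL_1$ into its local/nonlocal parts in the spirit of \eqref{eq:lin_op2}, integrating by parts, and using the positivity of the diffusion matrix $\sigma^{ij}$) gives a bound of the form
\begin{equation*}
\langle \cL_1 f, f\rangle_X \leq -C_1 \|f\|_Y^2 + C_2 \|\chi_R f\|_{L^2}^2,
\end{equation*}
where $\chi_R$ is a smooth cutoff on a large ball. Taking $\eta$ small enough that $\eta C_2 \|\chi_R f\|_{L^2}^2 \leq \tfrac{1}{4}\|f\|_{L^2}^2$ absorbs the loss into the telescoping term and yields $\llangle \cL_1 f, f\rrangle_X \leq -\tfrac{1}{2}\eta C_1 \|f\|_Y^2$.

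The two remaining claims are complementary. The estimate $|\Xi(f,g)| \lesssim \|f\|_Y \|g\|_Z$ follows from Cauchy--Schwarz in $L^2$ and the factorization
\begin{equation*}
|\Xi(f,g)| \leq \int_0^\infty \|S_{\cL}(\tau)\Pi\|_{Y \to L^2}\,\|S_{\cL}(\tau)\Pi\|_{Z \to L^2}\,\|f\|_Y \|g\|_Z\, d\tau,
\end{equation*}
so everything reduces to the $L^1$-integrability of the product. This is where I expect the main technical obstacle: the $Z \to L^2$ bound requires a dual application of Proposition \ref{prop:decay} (identifying $Z$ via the duality $\langle m_\gamma f, m_\gamma g\rangle$), and the $Y \to L^2$ bound uses that the extra derivative/weight in $Y$ provides one additional unit of polynomial decay through Proposition \ref{prop:decay}. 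Combining the two decay rates, one must verify that the total exponent exceeds $1$, which is exactly the constraint forcing $k_\gamma \in (7, 2\sigma_\gamma - 3)$ highlighted in the excerpt; the interval is non-empty thanks to $\sigma_\gamma > 5$ in Lemma \ref{lem:mono}, so the integrability just barely holds.
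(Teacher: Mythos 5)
The paper offers no proof of this proposition: it is quoted directly from Proposition 3.6 and Corollary 3.7 of \cite{carrapatoso2017landau}, so there is no internal argument to compare against. Your reconstruction does capture the enlargement/semigroup strategy underlying that reference: the norm equivalence via square-integrability of $\| S_{\cL}(\tau)\Pi\|_{X \to L^2}$ from Proposition \ref{prop:decay}, the telescoping identity $\Xi(\cL_1 f, f) = -\tfrac12 \| f\|_{L^2}^2$ for $f \in \Pi X$, and the absorption of the localized loss in the hypodissipative estimate by taking $\eta$ small are exactly the right moves, and the first two claims of the proposition would follow from them.

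Two points in your sketch do not hold up as written. First, the bound on $\|S_{\cL}(t)\Pi\|_{Z\to L^2}$ is not a ``dual application of Proposition \ref{prop:decay}'': that proposition only controls $L^2(m_1)\to L^2(m_0)$, whereas mapping the negative-order space $Z = H^{-1}_*(m_{\g})$ into $L^2$ requires the semigroup to gain a derivative, i.e.\ a regularization estimate of the form $\|S_{\cL}(t)\Pi\|_{Z\to L^2}\les t^{-1/2}\Theta(t)$ with an integrable short-time singularity. This is a separate and nontrivial ingredient of \cite{carrapatoso2017landau} that your argument does not supply, and without it the second line of \eqref{eq:coer_gam} is unproved. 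Second, you misattribute the constraint $k_{\g}<2\s_{\g}-3$: it has nothing to do with the integrability of the product in \eqref{eq:coer_gam}, but arises only later, in \eqref{eq:Ladd_gam}, from the transport term $-\bar c_l v\cdot\na f$ in $\cL_{add}$, which is not part of $\cL_1$. The integrability in $\Xi$ uses only the lower bound $k_{\g}>7$ (compare the exponent $\b=(k_{\g}-4-4\e)/|\g|>1$ in Section \ref{sec:Ladd_gam}); relatedly, the extra derivative in $Y$ does not buy time decay through Proposition \ref{prop:decay}, which sees only the weights, and the weight in $Y$ is in fact weaker than that of $X$ since $\g<0$.
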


 Recall $\cL_a = \cL_1 + \cL_{add}$ \eqref{eq:lin}. Denote by $\cR$ the terms other than $\cL_1$ in \eqref{eq:lin_terms}
\beq\label{eq:rem_gam}
\bal
\cR_{\al} & \teq \cL_{\al} - \cL_1 + N(f) + N(\bar f)   = \cL_{add} + N(f) + N(\bar f) , \\
 \cL_{add} &=  - \bar c_l v \cdot \na f + \bar c_{\om} f
+ (\al - 1) ( c(\bar f) f + c(f ) \bar f).
\eal
\eeq

From the even symmetries of $f$ in $v_i$ and the normalization conditions \eqref{eq:normal}, 
$f, \cL_1 f, (\cL_1 + \cR_{\al}) f, \cR_{\al} f$ are orthogonal to $1, v_i, |v|^2$ for all $t > 0$. In particular $\Pi h = h$ for $h =f, \cL_1 f,  \cR_{\al} f $. Using Proposition \ref{prop:coer_gam}, we obtain 
\beq\label{eq:rem_est1}
\f{1}{2} \f{d}{dt} ||| f |||_X^2 = 
\llangle (\cL_1 + \cR_{\al}) f , f \rrangle_X 
\leq - c || f ||_Y^2 + \eta \langle \cR_{\al} f , f \ra
+ \Xi( \cR_{\al} f, f)
\eeq
We will estimate $\cR_{\al}$ perturbatively. Below, we estimate $\cL_{add}$ in $\cR_{\al}$.

\subsection{Estimates of the additional terms}

We have the following estimates for $c(f)$ in \eqref{eq:LC}.

\begin{lem}\label{lem:prod2}
For $\g \in [-3, -2)$ and $l > 7$, denote $m = \la v \ra^{ l+\g} $. We have 
\[
\bal
 |  \la c(f) g, h \la v \ra^{l}  \ra_{\R^3} | & \les_{\g, l} 
|| f \la v \ra^{l/2} ||_{L^2} 
 \min\B(  || g ||_M ||  h ||_M, 
 || g \la v \ra^{l/2}||_{L^2} ( || h ||_{L^2} + || h||_{L^{\infty}} + || h \la v \ra^{l/2 + \g} ||_{L^2} ) \B)
 ,  \\
  || p ||_M  & \teq  || p m^{1/2} ||_{L^2} +  || (\na p) m^{1/2} ||_{L^2} .  
 \eal
\]
\end{lem}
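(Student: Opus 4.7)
The bilinear object to control is $I \teq \la c(f) g, h \la v \ra^{l} \ra_{\R^3}$. For $\g \in (-3, -2)$ a direct computation (already used in \eqref{eq:cl_cw_mon}) gives $c(f) = c_{\g} \, |v|^{\g} \ast f$ for some constant $c_{\g} > 0$, while for $\g = -3$ one has $c(f) = f$. In both cases $|c(f)(v)| \les (|v|^{\g} \ast |f|)(v)$, interpreting $|v|^{-3}$ as a multiple of the Dirac mass. The proof thus reduces to controlling
\[
J \teq \int_{\R^3} \int_{\R^3} |v-w|^{\g} |f(w)| \, |g(v)\, h(v)| \, \la v \ra^{l} \, dv \, dw,
\]
and I will prove the two sides of the $\min$ separately.

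\textbf{First bound $\|f\la v\ra^{l/2}\|_{L^2}\|g\|_M \|h\|_M$.} Apply the Peetre-type inequality $\la v \ra^{l/2} \les \la w \ra^{l/2} \la v-w \ra^{l/2}$ to reallocate the polynomial weight between $f(w)$ and the kernel; by Cauchy--Schwarz in $w$ one obtains $J \les \|f \la \cdot \ra^{l/2}\|_{L^2} \, \|K_{\g} \ast (|gh|\la v \ra^{l/2})\|_{L^2(dw)}$ with $K_{\g}(v) = |v|^{\g} \la v \ra^{l/2}$. Use Young's convolution inequality $\|K_{\g} \ast u\|_{L^2} \le \|K_{\g}\|_{L^p} \|u\|_{L^q}$ with $1/p + 1/q = 3/2$, choosing $p$ close to $1$ so that $K_{\g}$ is $L^p$-integrable near the origin (this requires $p|\g| < 3$, achievable for every $\g \in [-3,-2)$). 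To bound $\|u\|_{L^q} = \| |gh|\la v \ra^{l/2}\|_{L^q}$, write $gh\la v\ra^{l/2} = (g m^{1/2})(h m^{1/2}) \la v \ra^{-(l/2+\g)}$, apply H\"older, and estimate the factor $\|g m^{1/2}\|_{L^6}$ using the Sobolev embedding $H^1(\R^3) \hookrightarrow L^6(\R^3)$. Since $|\na m^{1/2}| \les \la v \ra^{-1} m^{1/2} \les m^{1/2}$, one has $\|g m^{1/2}\|_{H^1} \les \|g\|_M$, and similarly for $h$; the pure-weight factor $\la v \ra^{-(l/2+\g)}$ is integrable to a sufficiently high power since $l > 7$ and $\g \ge -3$.

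\textbf{Second bound.} Rather than treating $h$ symmetrically with $g$ through the $M$-norm, split the outer $v$-domain into three zones $\{|v| \le 1\}$, $\{1 \le |v| \le R\}$ and $\{|v| \ge R\}$ for a suitable large $R$. On the inner zone absorb $h$ in $\|h\|_{L^{\infty}}$ (the weight $\la v \ra^{l}$ is bounded); on the intermediate zone absorb $h$ in $\|h\|_{L^2}$; on the far zone use $\|h \la v \ra^{l/2+\g}\|_{L^2}$, whose decay exactly compensates the polynomial excess left after distributing only half of $\la v \ra^{l/2}$ to $g$. In each zone the remaining bilinear form in $(f,g)$ with kernel $|v-w|^{\g}$ is handled by Cauchy--Schwarz in $w$, distributing the other $\la v \ra^{l/2}$ to $g$ via Peetre, which produces $\|f\la\cdot\ra^{l/2}\|_{L^2}\|g\la\cdot\ra^{l/2}\|_{L^2}$.

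\textbf{Main obstacle.} The principal technical difficulty is the very singular kernel $|v-w|^{\g}$ for $\g$ close to $-3$. A naive Cauchy--Schwarz in $w$ with weight $\la w\ra^{-l/2}$ would require $\int |v-w|^{2\g} \la w \ra^{-l} dw$ to be locally integrable, which fails whenever $2\g \le -3$, i.e.\ throughout the range $\g \in [-3, -3/2]$ of interest. This is circumvented by Young's inequality combined with Sobolev embedding in the weighted space defined by $m^{1/2}$: only $L^p$-integrability of $|v|^{\g}$ for some $p > 1$ is required, which is preserved up to the endpoint $\g = -3$ by pushing $p$ toward $1$. The weight exponent $l + \g$ in $m$ is tuned precisely so that $m^{1/2}$ decays fast enough for the $H^1 \hookrightarrow L^6$ embedding of $gm^{1/2}$, $hm^{1/2}$ to close; the assumption $l > 7$ leaves the necessary slack for the purely polynomial H\"older factor in the first bound and for the far-zone weight $\la v \ra^{l/2+\g}$ in the second.
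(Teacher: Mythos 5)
There is a genuine gap in the first (and main) step. After applying Peetre's inequality $\la v\ra^{l/2}\les \la w\ra^{l/2}\la v-w\ra^{l/2}$ globally, your kernel becomes $K_{\g}(z)=|z|^{\g}\la z\ra^{l/2}$, and Young's inequality requires $K_{\g}\in L^{p}(\R^3)$ for some admissible $p$. You only check integrability near the origin ($p|\g|<3$), but at infinity $K_{\g}(z)\sim |z|^{\g+l/2}$ with $\g+l/2> -3+7/2>0$, so $K_{\g}$ \emph{grows} and lies in no $L^p$ space. The weight you are transferring ($l/2>7/2$) exceeds the decay of the kernel ($|\g|\le 3$), so the global Peetre-plus-Young strategy cannot close. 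The missing idea — which is how the paper proceeds — is to split the convolution into the near-diagonal region $|v-w|\le \la v\ra/4$, where $\la v\ra\asymp\la w\ra$ so the weight exchange costs nothing on the kernel and the Hardy--Littlewood--Sobolev inequality $\||\cdot|^{\g}\ast u\|_{L^q}\les\|u\|_{L^2}$ (with $1/q=|\g|/3-1/2$) handles the singularity, and the far region $|v-w|>\la v\ra/4$, where $|v-w|^{\g}\les\la v\ra^{\g}$ pointwise and the integral is trivially controlled by Cauchy--Schwarz since $l>3$. Peetre must only be used where it is free.

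The second bound has a related inconsistency: you propose to handle the residual bilinear form in $(f,g)$ "by Cauchy--Schwarz in $w$" against the kernel $|v-w|^{\g}$, but as you yourself note in the obstacle paragraph, $\int|v-w|^{2\g}\la w\ra^{-l}\,dw$ diverges locally for all $\g\in[-3,-2)$; and the escape route you cite (Sobolev embedding) is unavailable here because $h$ carries no derivative norm in the second estimate. The correct mechanism is again HLS on the near-diagonal piece, followed by H\"older with $h$ placed in $L^{s}$, $s\in[2,\infty]$ (interpolated between $\|h\|_{L^2}$ and $\|h\|_{L^\infty}$), since the dual HLS exponent $q'=6/(9-2|\g|)$ satisfies $1/q'-1/2\in[0,1/2]$. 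Your zone decomposition in $v$ is not needed and does not by itself resolve the kernel singularity.
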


\begin{proof}

Below, we drop the dependence of norms on $\R^3$. 

Recall the formula of $c(f), a_{ij}$ \eqref{eq:LC}, \eqref{eq:LC_a}. A direct calculation yields 
\[
 c(f) = c_{\g} |v|^{\g} \ast f, \  \g \in (-3, -2), \quad c(f) = f, \ \g = -3.
\]
We introduce $p_2$ below, and for $ \g \in [-3, -2), l > 7$, we have 
\beq\label{eq:nonest_expo}
 p_2 = \f{l+ \g}{2}, \quad 
- 2 \g < l ,\quad  \f{l}{4} < p_2 .
\eeq

Firstly, using Sobolev embedding inequality, we have 
\[
 || f \la v \ra^{p_2} ||_{L^6} 
 \les || \na ( f \la v \ra^{p_2}  ) ||_{L^2}
 \les || \na f \cdot \la v \ra^{p_2}||_{L^2} 
 + || f \cdot \la v \ra^{p_2}||_{L^2}  =  || f||_M .
\]

Since $ || f \la v \ra^{p_2} ||_{L^2} \leq ||f||_M$, we yield 
\beq\label{eq:non_interp}
 || f \la v \ra^{p_2} ||_{L^p}  \les || f||_M, \quad p \in [2, 6]. 
\eeq

For $\g = -3$, since $c(f) =  f $ and \eqref{eq:nonest_expo}, using H\"older's inequality and 
\eqref{eq:non_interp}, we obtain 
\[
 |  \la c(f) g, h \la v \ra^{l}  \ra_{\R^3} |
 \les || f \la v \ra^{ l/2 } ||_{L^2}
  || g \la v \ra^{l/4} ||_{L^4}     || h \la v \ra^{l/4} ||_{L^4} 
  \les  || f \la v \ra^{ l/2 } ||_{L^2}  || g||_M || h||_M,
\]
which provides the first estimate in the Lemma. The second estimate is trivial.

For $ \g \in (-3, -2)$, we partition the integral in $c(f)$ as follows 
\[
 c(f) = C \int_{|v- w| \leq \la v \ra / 4}  f(w) |v-w|^{\g} d w 
 + C \int_{|v-w| > \la v \ra/4} f(w) |v-w|^{\g} d w  \teq c_1(f) + c_2(f).
\]

For $c_2(f)$, using Cauchy-Schwarz inequality and $l > 7$ we yield 
\[
\bal
|c_2(f)| & \les \la v \ra^{\g} \int |f(w)| d w 
\les \la v \ra^{\g} || f \la v\ra^{l/2} ||_2 , \\
 | \la c_2(f) g , h \la v \ra^l  \ra | & \les || f \la v\ra^{ \f{l}{2} } ||_2  || g h \la v \ra^{l+ \g} ||_{L^1}  
  \les || f \la v\ra^{ \f{l}{2} } ||_2 \min( || g||_M || h||_M , 
 || g \la v \ra^{ \f{l}{2} } ||_2 || h \la v \ra^{ \f{l}{2} +  \g} ||_{2} ) .
 \eal
\]

Next, we estimate $c_1(f)$. 
Since $|v- w| \leq \la v \ra / 4$ implies $ \la v \ra \asymp \la w \ra$, we get 
\[
\la v \ra^{l/2} |c_1(f) | 
\les_l \int_{|v-w| \leq \la v \ra/4} |f(w) | \la w \ra^{l/2} |v-w|^{\g} d w
\les_l  (|f(\cdot) | \la \cdot \ra^{l/2}  \ast |\cdot|^{\g})(v).
\]

Using Hardy-Littlewood-Sobolev inequality, we derive 
\[
|| \la v \ra^{l/2} |c_1(f) |  ||_{L^q} 
\les_{\g}   || |f   \la v \ra^{l/2}  ||_{L^2}, \quad 
\f{1}{q} = \f{1}{2} + \f{|\g|}{3} - 1 = \f{|\g|}{3} - \f{1}{2}. 
\]

Denote by $q^{\prime}$ the conjugate exponent 
\[
q^{\prime} = \f{1}{1-1/q} =
\f{1}{ 3/2 - |\g|/ 3 } = \f{6}{ 9 - 2 |\g|}  \in (1,2]
\]
for $\g \in [-3, -2)$.  Using H\"older's inequality, \eqref{eq:non_interp}, $2 q^{\prime} \in [2, 4]$, and \eqref{eq:nonest_expo}, we establish 
\[
\bal
&|\la c_1(f) g , h  \la v \ra^{l} \ra |
\les  || c_1(f) \la v \ra^{l/2} ||_{L^q} 
|| g h \la v \ra^{l/2} ||_{L^{q^{\prime}}} 
\les  || f \la v \ra^{l/2}||_{L^2} \cdot I, \quad I \teq  || g h \la v \ra^{l/2} ||_{L^{q^{\prime}}} , \\
& 
I \les  || g \la v\ra^{l/4} ||_{L^{ 2q^{\prime}}} 
  || h \la v\ra^{l/4} ||_{L^{ 2q^{\prime}}}  
  \les || g||_M || h ||_M, \quad I \les || g \la v \ra^{l/2} ||_{L^2}( || h||_{L^2} + || h||_{L^{\infty}} ).
\eal
\]
where we have used $ \f{1}{q^{\prime}} - \f{1}{2} \in [0, 1/2]$ in the last inequality. Combining the above estimates, we conclude the proof.
\end{proof}

Using Lemma \ref{lem:prod2} with $l = k_{\g}$, the norms $X, Y, Z$ \eqref{eq:norm_gam}, and duality, we obtain 
\beq\label{eq:cf_Z}
|| p||_M \leq || p ||_Y, \ \forall p \in Y, \quad  || c(f) g ||_Z \les || f \la v \ra^{k_{\g}/2} ||_2 || g||_Y  \les || f ||_X || g ||_Y.
\eeq


\subsubsection{Estimates of $\cL_{add}$ }\label{sec:Ladd_gam}

Recall $\cL_{add}$  \eqref{eq:rem_gam}, \eqref{eq:rem_est1}. We first estimate the term in $ \eta \la \cdot , \cdot \ra_X$ \eqref{eq:rem_est1}. 

The estimates of the scaling terms $-\bar c_l v \cdot \na f + \bar c_{\om} f $ are similar to \eqref{eq:Ladd_2} except that we need to perform integration by parts in $\R^3$. To estimate 
$(\al-1) \la \bar c(f) f + c(f) \bar f, f m_{\g}^2 \ra $, we use Lemma \ref{lem:prod2} with $l = k_{\g}, m_{\g} = \la v \ra^{k_{\g} /2}, || p ||_M \leq || p ||_Y$.  We establish 
\beq\label{eq:Ladd_gam}
\la \cL_{add} f , f m_{\g}^2 \ra
\leq \int_{\R^3}  ( \f{k_{\g}  + 3 }{2} \bar c_l + \bar c_{\om}  ) f^2 m^2_{\g} d v 
+ C|\al-1|   || f||_Y ( || f||_Y + || f m_{\g} ||_{L^2} ) .
\eeq

Using $ \bar c_{\om} / \bar c_l = -\s_{\g}$ and \eqref{eq:norm_gam}, we get 
\[
\f{k_{\g}  + 3 }{2} \bar c_l + \bar c_{\om} 
= (\f{k_{\g}  + 3 }{2} - \s_{\g}) \bar c_l =  \f{\s_{\g}-5}{2} \bar c_l = - c_{\g}  (\al-1),  
\]
for some $c_{\g} > 0$. Note that $|| f m_{\g} ||_{L^2} = || f ||_X $. Combining the above estimates, using the Cauchy-Schwarz inequality, and by requiring $\al>1, |\al-1|$ small,  we establish 
\beq\label{eq:EE_lin_gam0}
\eta\la \cL_{add} f, f  \ra_X \leq  
- c_{1,\g} || f ||_Y^2 - c_{2,\g} (\al-1) || f ||_X^2. 
\eeq
for some absolute constants $c_{1,\g}, c_{2, \g} > 0$. 

\vspace{0.1in}
\paragraph{\bf{Estimate $\Xi(\cL_{add}f, f) $}}

To estimate the integral of $\Xi$ in \eqref{eq:rem_est1}, \eqref{eq:inner_time}, we choose weights 
\[
 l_1 = \f{ k_{\g} + \g - 2}{2}, 
l_1^{\prime} = \f{ k_{\g} + \g }{2} , l_0 = \f{\g+3}{2} + \e,  l_* = \f{\g+3}{2} + 2 \e , \quad
m_1 = \la v \ra^{l_1}, 
m_1^{\prime} = \la v \ra^{l_1^{\prime}}, 
m_0 = \la v \ra^{l_0}, 
\]
Since $k_{\g} > 7, |\g| \leq 3$ and $\b=  \f{l_1 + l_1^{\prime } - 2 l_* }{  |\g| } =\f{k_{\g} - 4-4 \e}{|\g|}  > 1$ for small $\e>0$.
Applying Proposition \ref{prop:decay} to $(m_0, m_1), (m_0, m_1^{\prime})$, and choosing $0< \e < \f{ |\g|}{8} (\f{k_{\g}-4}{|\g|}-1)$, we obtain
\[
|\Xi(p, q)|=
 \B|\int_0^{ \infty} \la S(\tau) \Pi p , S(\tau) \Pi q \ra d \tau \B|
 \les  || p  ||_{L^2(m_1)} || q  ||_{L^2(m_1^{\prime})} \int_0^{\tau} \la \tau \ra^{  \f{l_1 + l_1^{\prime} - 2 l_*}{|\g|}  } d \tau 
 \les || p  ||_{L^2(m_1)} || q  ||_{L^2(m_1^{\prime})}.
\]

Applying the above estimate to $p = -\bar c_l v \cdot \na f + \bar c_{\om} f , q = f$, we obtain 
\[
|\Xi( p, q  )| 
\les  || p m_1||_{L^2} || q m_1^{\prime} ||_{L^2}
\les |\al-1| \cdot || ( |\na f| + f) \la v \ra^{ ( k_{\g}+\g)/2} ||_{L^2}
|| f \la v \ra^{ ( k_{\g}+\g)/2} ||_{L^2} \les |\al-1| \cdot || f||_Y^2.
\]

Applying the second estimate in \eqref{eq:coer_gam} in Proposition \ref{prop:coer_gam} 
to $ p  = (\al-1) ( c(\bar f) f +  (c(f) \bar f)$, and using \eqref{eq:cf_Z}, we obtain 
\[
 | \Xi(p, f)| \les || p ||_Z || f ||_Y 
 \les |\al-1|( || c(\bar f) f ||_Z + || c(f) \bar f||_Z )  || f ||_Y
\les (\al-1) ( || f ||_X + || f||_Y ) || f ||_Y.
\]

Combining the above estimates and \eqref{eq:EE_lin_gam0} and using Cauchy-Schwarz inequality, we establish 
\beq\label{eq:EE_lin_gam}
 | \Xi(\cL_{add} f, f)| + \eta \la \cL_{add} f , f \ra
 \leq (- c_{1, \g} +  C|\al-1| ) || f ||_Y^2 - c_{2, \g}|\al-1| \cdot || f||_X^2.
\eeq
for absolute constants $c_{1,\g}, c_{2,\g}, C$  independent of $\al$.

\section{Nonlinear stability and finite time blowup}\label{sec:non}

In this section, we close the nonlinear estimates and establish finite time blowup.



\subsection{Estimates of nonlinear terms}

The nonlinear estimates are relatively standard based on functional inequalities. Below, we estimate the following nonlinear terms \eqref{eq:lin_terms} in order
\[
 - c_l v \cdot \na f + c_{\om} f  , \quad 
Q(f, f) +(\al-1) c(f) \cdot f .
\]

Using  Lemma \ref{lem:prod2} with $(f, g) = (f, f), (\bar f, f), (\bar f, f)$,
$h = |v|^i \la v \ra^{-l} $ 
, $7 < l < \min( k_2-2, k_{\g}), i =0,2$, 
and $ h \in L^2 \cap L^{\infty}, || h \la v \ra^{l/2 +\g} ||_2 \les 1 $, we obtain 
\[
 \B|\int \B( c(f) f + c(\bar f)f + c(f) \bar f \B) | v|^i d v \B|
+  \B|\int  f  | v |^i d v \B| \les   || f \la v \ra^{ l/2 } ||_{L^2}
+ || f \la v \ra^{ l/2 } ||_{L^2}^2 . 
\]
From the normalization conditions \eqref{eq:normal},\eqref{eq:lin_terms}, 
in the case of Coulomb with radially symmetry, using $|| f \la v \ra^{ l/2 } ||_{L^2}
\les E_2^{1/2} $ \eqref{eq:energy_L2}, we further get 
\beq\label{eq:EE_non_clcw}
\bal
| C_j(f + \bar f) - C_j(\bar f)| 
\les (E_2^{1/2} + E_2), \ j = 1, 2, \quad |c_l | , |c_{\om}| \les |\al-1| \cdot (E_2^{1/2} + E_2).
\eal
\eeq

Following the integration by parts argument in \eqref{eq:Ladd_1}, \eqref{eq:Ladd_2}, we get 
\beq\label{eq:EE_non_clcw2}
 |   \la - c_l v \cdot \na f + c_{\om} f, f \rho \ra |
 + |\la - c_l v \cdot \na f + c_{\om} f, f \cdot  ( |v|^{-2} \rho_2) \ra | \les |\al-1| (E_2^{1/2} + E_2) E_2. 
\eeq

Similarly, for general cases, we get 
\beq\label{eq:EE_non_clcw_gam}
|c_l| + |c_{\om}| \les (\al -1) \cdot ( || f ||_X + ||f ||_X^2).
\eeq
The estimates of the operator are similar to those in Section \ref{sec:Ladd_gam}
\beq\label{eq:EE_non_clcw_gam2}
| \la - c_l v \cdot \na f + c_{\om} f, f m_{\g}^2 \ra  | 
+ | \Xi( - c_l v \cdot \na f + c_{\om} f, f  ) | 
\les (|c_l| + |c_{\om}|) ( || f ||_Y^2 + || f||_X^2 ) .
\eeq

\vspace{0.1in}
\paragraph{\bf{Coulomb potential with radially symmetric solution}}
The estimates are straightforward, and we only give a sketch. Let $\rho_2$ be the weight defined in \eqref{eq:wg_para}. It satisfies $\rho_2 \gtr \la v \ra^{7} |v|^2$. Using the formulas \eqref{eq:moment} and Cauchy-Schwarz inequality, we  obtain 
\[
|A_2(r) |\les \min(1, r^{5/2}) E_2^{1/2}, \quad  
|A_4(r)| \les \min(1, r^{9/2}) E_2^{1/2}, \quad 
|B_1(r)| \les \min(1, r^{(3-k_2)/2} ) E_2^{1/2}.
\]
which along with \eqref{eq:BSlaw2}, \eqref{eq:BSlaw3} yield the pointwise estimates for $g$ similar to \eqref{eq:gbar_asym} 
\[
|g_r| \les \f{r}{1 + r} E_2^{1/2}, \quad | g_{rr} | \les \la r \ra^{-3} E_2^{1/2},
\quad | g_{rrr} | \les \min(r, r^{-4}) E_2^{1/2}, 
\quad |g_{rrrr}|\les \min(1, r^{-5}) E_2^{1/2} .
\]

Since $r^2 \rho \les_{R_1} \rho_2$, following the estimates of the nonlocal terms in Section \ref{sec:E2_lin}, we derive 
\beq\label{eq:EE_non}
\bal
 | \int N(f) f r^2 \rho | 
 + | \int N(f) f \rho_2 |
 & \leq 
 E_2^{1/2} \B(  \int ( f^2 + (\pa_r f)^2) \la r \ra^{-3} \rho_2  
 +  |\al-1|  \int f^2 \rho_2 d r \B) \\
 & \leq E_2^{1/2} ( D_2 
 +  |\al-1|  E_2   ),
 \eal
\eeq
where  $D_2, E_2$ are defined in \eqref{eq:coer_norm}, \eqref{eq:energy_L2}.

\vspace{0.1in}
\paragraph{\bf{General cases}}

In general cases for soft potentials without radial symmetry, the estimates can be obtained using functional inequalities. We use the following estimate for the collision operator from Corollary 4.4 \cite{carrapatoso2017landau}.

\begin{prop}\label{lem:prod1}
Consider any weight function $m \gtr \la v \ra^{l}, l > 7$. we have 
\[
\la Q(f, g), h \ra_X
\les ( || f ||_X || g ||_Y + ||f ||_Y || g ||_X) || h||_Y,
\quad  || Q(f, g) ||_Z \les || f ||_X || g ||_Y + || f ||_Y || g||_X.
\]
\end{prop}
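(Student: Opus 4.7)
The plan is to exploit the divergence form of $Q(f,g)$ together with the anisotropy of the Landau $H^1_*$ norm, which is designed precisely to match the two eigendirections of the diffusion matrix $a_{ij}(f)$. I would begin by writing
\[
 Q(f,g) = \pa_i\bigl(a_{ij}(f) \pa_j g\bigr) - \pa_i\bigl( g \,(\pa_j \phi^{ij}) \ast f \bigr),
\]
so that after pairing with $h$ in $X = L^2(m)$ and integrating by parts, the principal contribution is $I_1 = -\int a_{ij}(f)(\pa_j g)\,\pa_i(h m^2)\,dv$ and the remainder $I_2$ involves the convolution $(\pa_j \phi^{ij}) \ast f$ paired against $g\,\pa_i(hm^2)$. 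Since $\pa_j \phi^{ij}$ has homogeneity $\g+1$, $I_2$ can be controlled by the Hardy--Littlewood--Sobolev and weighted H\"older estimates already used in Lemma \ref{lem:prod2}, yielding a bound of the form $\|f\|_X \|g\|_X \|h\|_Y$, which is absorbed into the desired right-hand side.

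For the principal term $I_1$, the key input is the two-sided anisotropic pointwise bound
\[
 a_{ij}(f)(v)\, \xi_i \xi_j \les M(f)\bigl(\la v \ra^{\g}|P_v \xi|^2 + \la v \ra^{\g+2}|(I - P_v) \xi|^2\bigr), \quad M(f) \les \|f\|_X,
\]
which reflects the degeneration of $a_{ij}$ by two extra powers of $|v|$ in the longitudinal direction. Decomposing both $\na g$ and $\na(hm^2)$ as $P_v \na + (I - P_v)\na$ and applying Cauchy--Schwarz to each of the four resulting terms produces exactly the weighted $L^2$ pieces appearing in $\|g\|_Y$ and $\|h\|_Y$, giving $|I_1| \les \|f\|_X \|g\|_Y \|h\|_Y$. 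Derivatives falling on $m$ in $\pa_i(h m^2)$ cost only one power of $\la v\ra$ and are absorbed by the $L^2(\la v\ra^{\g/2} m)$ part of $\|h\|_Y$. The second cross term $\|f\|_Y \|g\|_X$ in the stated bound arises by interchanging the roles of $f$ and $g$: writing $Q(f,g)$ in the alternate form in which the derivative on $g$ is integrated by parts onto $f$ produces an integrand in which moments of $g$ (controlled by $\|g\|_X$) multiply $\na f$. The dual bound $\|Q(f,g)\|_Z \les \|f\|_X \|g\|_Y + \|f\|_Y \|g\|_X$ is then immediate by duality, since $Z = Y^*$ with respect to the $L^2(m^2)$ pairing.

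The main obstacle is establishing the anisotropic control on $a_{ij}(f)$ with constant $M(f)$ of low order in $f$, and verifying that the hypothesis $l > 7$ on the weight is exactly what is needed to control all four arising moments of $f$ (in particular $\int |f| \la v\ra^{\g+2}\, dv$) through $\|f\|_X$ alone. The anisotropy itself would be extracted by splitting the convolution $\phi^{ij} \ast f$ into the near region $|v - \td v| \leq \la v \ra / 4$ (treated by HLS as in the proof of Lemma \ref{lem:prod2}) and the far region (treated by moment bounds), and by exploiting the identity $v_i v_j(\d_{ij} - v_i v_j /|v|^2) = 0$ to obtain the improved decay rate $\la v\ra^\g$ in the $P_v$ direction. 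This directional cancellation is the delicate point, as without it the convolution would only produce the isotropic bound $\la v \ra^{\g+2}$ and the estimate would fail.
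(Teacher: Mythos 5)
The paper itself contains no proof of this proposition: it is quoted verbatim from Corollary~4.4 of \cite{carrapatoso2017landau}. Your sketch reconstructs the standard argument behind that result --- divergence form $Q(f,g)=\pa_i(a_{ij}(f)\pa_j g - ((\pa_j\phi^{ij})\ast f)\,g)$, integration by parts against $hm^2$, the anisotropic pointwise bounds on $a_{ij}(f)$ (gain of $\la v\ra^{-2}$ in the $P_v$ direction from the degeneracy $\phi^{ij}(w)w_j=0$) matched against the $H^1_*$ structure, and duality for the $Z$-estimate --- so the architecture is the right one and is essentially that of the cited reference.

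There is, however, one genuine gap in your bookkeeping for the remainder term $I_2$. You claim $|I_2|\les \|f\|_X\|g\|_X\|h\|_Y$ and that this "is absorbed into the desired right-hand side." It is not: since $\g<0$, the $L^2$ component of $\|\cdot\|_Y$ carries the weight $m\la v\ra^{\g/2}$, which is \emph{weaker} than $m$, so neither $\|g\|_X\les\|g\|_Y$ nor $\|f\|_X\les\|f\|_Y$ holds, and $\|f\|_X\|g\|_X$ is not dominated by $\|f\|_X\|g\|_Y+\|f\|_Y\|g\|_X$ (test on functions concentrated at large $|v|$). The repair is to put the weak norm on $f$ rather than on $g$: the pointwise bound $|(\pa_j\phi^{ij})\ast f|\les\la v\ra^{\g+1}\|f\la v\ra^{|\g|+1}\|_{L^1}$ requires only an $L^1$-moment of $f$, and for $\g=-3$ the condition $l>7$ is exactly what makes $\|f\la v\ra^{|\g|+1}\|_{L^1}\les\|f m\la v\ra^{\g/2}\|_{L^2}\leq\|f\|_Y$; combined with $\la v\ra^{\g/2+1}\leq 1$ on the $g$-factor, this yields $|I_2|\les\|f\|_Y\|g\|_X\|h\|_Y$, which is precisely the second term of the stated bound. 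Consequently your proposed mechanism for producing that second term --- "writing $Q(f,g)$ in the alternate form in which the derivative on $g$ is integrated by parts onto $f$" --- is both unnecessary and not actually available, as $Q$ is not symmetric in its arguments and admits no divergence form transferring $\pa_j g$ onto $f$; the cross term $\|f\|_Y\|g\|_X$ comes from $I_2$, not from a symmetrization.
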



Recall $m_{\g} = \la v \ra^{k_{\g}/2}, k_{\g} > 7$ and the norm $X, Y$ from \eqref{eq:norm_gam}. Using Lemmas \ref{lem:prod1}, \ref{lem:prod2} with $(f, g, h) = (f,f,f)$ and $l=k_{\g}$, 
$\la p, q \ra_X = \pa p, q m_{\g}^2 \ra$ \eqref{eq:norm_gam}, we obtain 
\beq\label{eq:non_gam}
| \la Q(f, f) + (\al-1) c(f) f , f m^2_{\g} \ra |
\les || f ||_X || f ||_Y^2 , 
\eeq

To estimate the $\Xi$ term in \eqref{eq:inner_time}, we use \eqref{eq:coer_gam} in Proposition \ref{prop:coer_gam}, Proposition \ref{lem:prod1} for $Q$, and \eqref{eq:cf_Z} for $c(f) f$ to obtain 
\beq\label{eq:non_gam2}
| \Xi ( Q(f, f) + (\al-1) c(f) f , f  )
\les || Q(f, f) + (\al-1) c(f) f||_Z || f ||Y
\les || f ||_X || f ||_Y^2 .
\eeq

\subsection{Estimate the residual error}
Recall the residual error \eqref{eq:lin_terms}
\[
N(\bar f) = - ( \bar c_l + c_l) v \cdot \na  \bar f + ( \bar c_{\om} + c_{\om} )\bar f + (\al-1) c(\bar f) \bar f. 
\]

Using $|\bar c_l| \les |\al-1|,  |\bar c_{\om}| \les |\al-1|$ \eqref{eq:ASS}, 
the estimate \eqref{eq:EE_non_clcw} for $c_l, c_{\om}$, and the fast decay of $\bar f$, 
\[
|\pa_r^i \bar f | \les_i \la r \ra^{i} \bar f, \quad  
\la r \ra^{\g} \bar f \les_{\g} e^{-r^2/2}, \quad  r^2 \rho \les \rho_2, 
\]
in the case of Coulomb potential with weights \eqref{eq:wg_para}, we obtain $c(\bar f) = \bar f$ and
\beq\label{eq:EE_err}
\bal
\int | N(\bar f) f| (\rho r^2 + \rho_2) d r 
& \les   \int | N(\bar f) f| \rho_2 d r  
  \les     ( \int f^2 \la r \ra^{-3} \rho_2 )^{1/2}
(\int N(\bar f)^2 \la r \ra^3 \rho_2 )^{1/2}  \\
 & \les |\al-1| (1 + E_2^{1/2} + E_2)  D_2^{1/2}
 \les |\al-1| (1  + E_2)  D_2^{1/2}\\
\eal
\eeq

For general cases, using Lemma \ref{lem:prod2} with $(f, g, h) = (\bar f, \bar f, f)$ and the 
decay of $\bar f$, we obtain 
\beq\label{eq:EE_err_gam}
| \la N(\bar f), f \rho_{\g} \ra | 
\les |\al-1| (1 + || f||_X^2 ) || f ||_Y.
\eeq

\subsection{Nonlinear stability and finite time blowup}\label{sec:blowup}

We impose a weak bootstrap assumption 
\beq\label{eq:boot_weak}
  E_2(t) \leq 1 .
\eeq
\paragraph{\bf{Coulomb case with radial symmetry}}
Combining the estimates of linear, nonlinear, and error terms \eqref{eq:EE_lin_L2}, 
 \eqref{eq:EE_non_clcw2}, \eqref{eq:EE_non}, \eqref{eq:EE_err}, and using $E_2 \leq 1$ \eqref{eq:boot_weak}, we establish 
\[
\bal
 \f{d}{dt} E_2 
 & \leq  
 ( \lam_1  E_2^{1/2} - \lam_2) 
 (D_2 + |\al -1| E_2)
 + \lam_3 |\al-1|  D_2^{1/2} 
  \eal
\]
for some absolute constant $\lam_1, \lam_2 ,\lam_3> 0$ independent of $\al$, and $\al>1$ with $|\al-1|$ small enough. Using $|\al-1| D_2^{1/2} \les \e D_2 + \e^{-1} |\al-1|^2  $ with $\e$  small, we can further obtain 
\beq\label{eq:boot0}
 \f{d}{dt} E_2 
\leq 
( \lam_1  E_2^{1/2} - \td  \lam_2) 
 (D_2 + |\al-1| E_2)
 + \td \lam_3 |\al-1|^2  .
\eeq
for some $\td \lam_2, \td \lam_3 > 0$ independent of $\al$. Now, we set the bootstrap threshold 
\beq\label{eq:boot_thres}
E_* = \f{2 \td \lam_3}{ \td \lam_2} |\al-1| .
\eeq

Then, there exists an absolute constant $\al_* > 1$ with $\al_* - 1$ small enough such that for $0< \al-1 < \al_* - 1$, 
the right hand side of \eqref{eq:boot0} is negative for $E = E_*$ and $E_* < 1$. As a result, for $E(0)< E_*$ and  all $ t> 0$, we prove 
\beq\label{eq:boot}
E_2(t) < E_* < 1,
\eeq
and the weak bootstrap assumption \eqref{eq:boot_weak} can be continued.

From \eqref{eq:EE_non_clcw}, we can further require that $|a_*-1|$ is small enough such that 
\eqref{eq:boot} implies 
\beq\label{eq:boot_expo}
|c_l | \leq \f{\bar c_l}{2} , \  |c_{\om}| \leq \f{\bar c_{\om}}{2} ,
\quad \f{|c_{\om} + \bar c_{\om}| } { |c_{l} + \bar c_{l}| }> \f{\s_{\g} + 5 }{2} > 5,
\quad c_l + \bar c_l \asymp |\al-1|, \ c_{\om} + \bar c_{\om} \asymp - |\al-1|,
\eeq
with implicit constants uniformly in $t$, where $\s_{\g} = | \f{ \bar c_{\om }} {\bar c_l}| > 5$ \eqref{eq:norm_gam} \eqref{eq:ASS0}.  
By truncating the approximate profile  $\bar f$, we can construct initial data $f_{in} = \bar f+ f \in C_c^{\infty}$ or $\bar f+ f \in C^{\infty}$ decaying exponentially fast, and it is even in $v_i$ and satisfies $E_2(f) < E_*$ and the conditions \eqref{eq:normal0} for $f_{in}$.
Using the rescaling relation \eqref{eq:dyn1} between the solution $f_{phy}$ to the physical equation \eqref{eq:LC_a} from $f_{phy}(0) = f_{in}$ and its associated solution to the rescaling equation \eqref{eq:LC_a_rescal} $F$, we obtain 
\beq\label{eq:blowup_vk}
\bal
  & ||  f_{phy}( t(\tau)) |v|^k ||_{L^1} = 
 C_{\om}(\tau)^{-1} C_l(\tau)^{k+3} || F(\tau) |v|^k ||_{L^1} \\
 = & \exp\B( \int_0^{\tau} (- c_{\om}(s) - \bar c_{\om} - (k+3) (c_l(s) + \bar c_l)) d s \B) ||  F(\tau) |v|^k ||_{L^1}
 \eal
\eeq
The estimate \eqref{eq:boot} implies $E_2( F(\tau) - \bar f ) < E_*$ for all $\tau > 0$. 
Since $\f{|c_{\om} + \bar c_{\om}| } { |c_{l} + \bar c_{l}| }> \f{1}{2}(\s_{\g} + 5) > 5$  \eqref{eq:boot_expo}, the mass $(k=0)$ and energy $(k=2)$ blows up as $\tau \to \infty, t(\tau) \to  T = t(\infty) < +\infty$. Note that \eqref{eq:boot_expo} implies that $C_{\om} C_l^{-3-\g}$ in \eqref{eq:dyn2} decays exponentially in $\tau$ and thus $t(\infty) < +\infty$.


Since a sufficiently smooth perturbation $f$ with $ f(v) = \la v \ra^{- l}, l \geq 7$ for
sufficiently large $v$ is in the energy class  $E_2(f) < +\infty$ \eqref{eq:energy2}, $f \in X $ \eqref{eq:norm_gam}, 
we can also construct initial data $f_{in} = \bar f + f$ with small  $f$ that leads to blowup.


\vspace{0.1in}
\paragraph{\bf{General case}}

Similarly, we first impose a weak bootstrap assumption $|| f ||_X \leq 1$.
Combining the estimates for linear, nonlinear, and error terms \eqref{eq:EE_lin_gam}, \eqref{eq:non_gam}, \eqref{eq:non_gam2},
\eqref{eq:EE_non_clcw_gam}, \eqref{eq:EE_non_clcw_gam2}, \eqref{eq:EE_err_gam}, and then following \eqref{eq:boot0} and using the equivalence between $||| \cdot |||_X$ and $|| \cdot ||_X$ from Proposition \ref{prop:coer_gam}, we obtain 
\beq\label{eq:boot_gam}
  \f{d}{dt} ||| f |||_X^2 \leq ( c_{1,\g}  ||| f |||_X + c_{4,\g} |\al-1| - c_{2, \g})
  ( || f ||_Y^2 + |\al -1| \cdot ||| f |||_X^2 ) + c_{3,\g} (\al - 1)^2. 
\eeq
for some $c_{i, \g} > 0$ independent of $\al $. We choose a threshold $E_{*, \g} = e_{ \g} |\al -1| < 1$  with $1 < \al < \al_{*, \g}$ for some absolute constant $e_{\g}$ and $\al_{*,\g} > 1, |\al_{*,\g}-1|$ small to obtain nonlinear stability estimates $ ||| f |||_X^2 < E_{*, \g} $. The remaining steps of finite time blowup are the same as the previous case.


\begin{remark}\label{rem:asymp}
By performing higher order stability estimates and using the time-differentiation argument
\cite{chen2019finite,chen2019finite2,chen2021HL,chen2020slightly,chen2021regularity}, one can prove that the blowup is asymptotically self-similar and $|| f||_{L^{\infty}}$ blows up in finite time. In the time-differentiation argument for convergence, due to the spectral gap $|\al-1|$ in \eqref{eq:boot0}, \eqref{eq:boot_gam}, one can obtain exponential convergence of $f + \bar f $ to the \textit{exact} self-similar profile of \eqref{eq:LC_a}, which is close to the Maxwellian $ \mu$.

\end{remark}

\vspace{0.2in}
\noindent
{\bf Acknowledgments.} 
JC is grateful to Vlad Vicol for introducing the Landau equation and some stimulating discussion at the early stage of the work.

\appendix


\section{Derivations for radially symmetric functions}\label{app:rad}

In this appendix, we derive some formulas 
 for radially symmetric functions: $f(v) = F(|v|)$ for some function $F$. 
Denote $r = |v|$. 
To simplify the notations, for any radially symmetric function $f(v) = F(|v|), v\in \R^3$ for some $F$, we will write $f(r)$ for $F(r)$. 
We recall the notations at the beginning of Section \ref{sec:lin}.



\subsection{Derivations for the Landau equation with radial symmetry}


Denote 
\[
g = (-\D)^{-2} f = \D^{-2} f, 
\quad g_1 = \D^{-1} f .
\]

For any radially symmetric function $p$, we have 
\beq\label{eq:deri}
\bal
\pa_i p &= \f{v_i}{r} p_r, \quad \pa_{ij} p = ( \f{\d_{ij}}{r} - \f{v_i v_j}{r^3} ) p_r + \f{v_i v_j}{r^2} p_{rr} , \quad  \D p = (\pa_{rr} + \f{2}{r} \pa_r) p.
\eal
\eeq

Using the above identities, we yield 
\[
\pa_{ij}(-\D)^{-2} f(v) \cdot \pa_{ij} f 
= \pa_{ij } g \pa_{ij} f = \B( ( \f{\d_{ij}}{r} - \f{v_i v_j}{r^3} ) g_r + \f{v_i v_j}{r^2} g_{rr}  \B) \B( ( \f{\d_{ij}}{r} - \f{v_i v_j}{r^3} ) f_r + \f{v_i v_j}{r^2} f_{rr} \B) ,
\]
where the index $i$ or $j$ sums over $1,2,3$ if it appear twice. Since
\[
 ( \f{\d_{ij}}{r} - \f{v_i v_j}{r^3} )  \f{v_i v_j}{r^2} 
 = \f{v_i v_i}{r^3} - \f{ v_i v_i v_j v_j}{r^5} = \f{1}{r} - \f{1}{r} = 0,
\]
the cross terms $f_r g_{rr}, g_r f_rr$ in the above expansion vanish. We get
\[
\bal
\pa_{ij}(-\D)^{-2} f(v)  \cdot \pa_{ij} f 
&= \sum_{i, j} ( \f{\d_{ij}}{r} - \f{v_i v_j}{r^3} )^2 f_r g_r + \f{v_i^2 v_j^2}{r^4} f_{rr} g_{rr} \\
& = ( \f{3}{r^2} - \f{ 2 v_i v_i}{r^4} +  \f{v_i v_i v_j v_j}{r^6}    ) f_r g_r 
+ f_{rr} g_{rr} 
 = \f{2}{r^2} f_r g_r + f_{rr} g_{rr} .
\eal
\]

Hence, for radially symmetric function $f$, we can rewrite $Q(f, f)$ \eqref{eq:Q} and \eqref{eq:LC_a} as \eqref{eq:Q_1D}.

\subsection{Formulas for the nonlocal terms}

In this section, we derive the formula for $g = \D^{-2} f$. We assume that $f$ has a decay rate $ |f| \les r^{-l}$ for $r \geq 1$ and some $l > 2$.
Denote 
\beq\label{eq:moment}
A_k(r) = \int_0^r f(s) s^k ds, \quad B_k(r) = \int_r^{\infty} f(s) s^k ds.
\eeq

Clearly, we have 
\beq\label{eq:moment_deri}
\pa_{ r} A_k =  f(r) r^k, \quad \pa_r B_k = - f(r) r^k.
\eeq

We first derive $g_1 \teq \D^{-1} f$. From $\D g_1 = f$, we have 
\[
(\pa_{rr} + \f{2}{r} \pa_r ) g_1 = f, \quad  \pa_r( r^2 \pa_r g_1) = r^2 f.
\]

Integrating both sides from $0$ to $r$ yields 
\beq\label{eq:BSlaw_deri1}
r^2 \pa_r g_1 =  \int_0^r s^2 f(s) ds, \quad \pa_r g_1 = \f{1}{r^2} \int_0^r f(s ) s^2 ds.
\eeq


Since $|f| \les r^{- l}, l > 2$, integrating both sides from $r$ to $\infty$, we derive
\[
\bal
 g_1(r)  & = \D^{-1} f = - \int_r^{\infty} \f{1}{z^2} \int_0^z f(s) s^2 ds dz 
= - \int_0^{\infty} f(s) s^2 \int_{ z \geq \max( s, r) }  \f{1}{z^2} dz ds \\
&= -\int_0^{ \infty} f(s) \f{s^2}{  \max(s, r)  } ds
= - \f{1}{r}\int_0^r f(s) s^2 ds - \int_r^{\infty} f(s) s  ds  =  -\f{1}{r} A_2(r) - B_1(r) .
\eal
\]

It is easy to verify that $\D g_1 = f$. The above formula shows that if $f(r)$ has a decay rate $r^{-l}$ for large $r$ with $ l > 2$, then $g_1(r)$ has a decay rate $r^{-\b}, \b =\min(1, l-2)$. 

Next, we derive $ g = \D^{-1} g_1$. Using the same derivation as \eqref{eq:BSlaw_deri1}, we obtain 
\[
 \pa_r g = \f{1}{r^2} \int_0^r g_1(s) s^2 ds.
\]

In $Q(f,f)$ \eqref{eq:Q_1D}, we only need the formula for $\pa_r g$. Combining the formula for $g_1$ and $\pa_r g$, we obtain 
\[
\bal
\pa_r g(r) &=  - \f{1}{r^2} \int_0^r  z^2 \B( \f{1}{z}\int_0^z f(s) s^2 + \int_z^{\infty} f(s) s ds
 \B)  d z \teq I + II.
\eal
\]
For $I$, using the operators $A_k, B_k$ \eqref{eq:moment}, we can simplify it as follows 
\[
I = - \f{1}{r^2} \int_0^r f(s) s^2 \int_s^r z dz  ds
= - \f{1}{r^2} \int_0^r f(s) \f{r^2 - s^2}{2} s^2 ds
 = -\f{1}{2} A_2(r) + \f{1}{2} \f{A_4(r)}{r^2}  .
\]
For $II$, we have 
\[
\bal
II &= - \f{1}{r^2} \int_0^r z^2 \int_z^{\infty} f(s) s ds
= -\f{1}{r^2} \int_0^{\infty} f(s) s \int_0^{\min(s, r)} z^2 dz  ds \\
& = - \f{1}{r^2} \int_0^{\infty} f(s) s \cdot \f{1}{3} (\min(s, r))^3 ds
 = -\f{1}{ 3r^2} \B( \int_0^r f(s) s^4 + r^3 \int_r^{\infty}  f(s) s ds \B)  = -\f{A_4(r)}{3 r^2} - \f{r}{3} B_1(r).
\eal
\]

Combining the above formulas, we obtain 
\beq\label{eq:BSlaw2}
g_r(r) = I + II = - \f{1}{2} A_2 + \f{1}{6} \f{A_4}{r^2} - \f{r}{3} B_1.
\eeq

Using \eqref{eq:moment_deri}, we can obtain the formulas for $\pa_r^k g, k=2,3,4$
\beq\label{eq:BSlaw3}
\bal
\pa_r^2 g  & = - \f{1}{2} r^2 f + \f{1}{6} \f{ f r^4}{r^2} - \f{1}{3} \f{A_4}{r^3} - \f{B_1}{3} +\f{ fr^2}{3} =  -\f{1}{3} \f{A_4}{r^3} - \f{B_1}{3}, \\
\pa_r^3 g & = \f{A_4}{r^4} - \f{1}{3} \f{f r^4}{r^3} + \f{1}{3} fr = \f{1}{r^4} A_4(r), \\
\pa_r^4 g & =  \f{ fr^4}{r^4} - \f{4 A_4}{r^5} = f - \f{4 A_4}{r^5}.
\eal
\eeq
Using the above formulas, we determine \eqref{eq:Q_1D}. For $f \geq 0$, from \eqref{eq:BSlaw2},\eqref{eq:BSlaw3}, we obtain that 
\[
g_r \leq -\f{1}{2} A_2 + \f{1}{6} \f{A_4}{r^2}
= \int_0^r f(s) ( -\f{1}{2} s^2 + \f{1}{6} \f{s^4}{r^2}) ds \leq 0, \quad 
g_{rr} \leq 0, \quad g_{rrr} \geq 0.
\]

\bibliographystyle{plain}
\bibliography{selfsimilar}

\end{document}